\documentclass{amsart}
\usepackage[utf8]{inputenc}

\title{Sympletic reduction of the sub-Riemannian geodesic flow for metabelian nilpotent groups}

 \author[Bravo-Doddoli]{Alejandro Bravo-Doddoli}
  \address{A. Bravo-Doddoli;
 {University of Michigan, 530 Church St, Ann Arbor, MI 48109, United States
 \\
 \href{abravodo@umich.edu}{abravodo@umich.edu}}
 }
 \author[Le Donne]{Enrico Le Donne}
\address{E. Le Donne;
University of Fribourg, Chemin du Mus\'ee~23, 1700 Fribourg, Switzerland 
\\
\href{enrico.ledonne@unifr.ch}{enrico.ledonne@unifr.ch}}

	\author[Paddeu]{Nicola Paddeu}
 \address{N. Paddeu;
 {University of Fribourg, Chemin du Mus\'ee~23, 1700 Fribourg, Switzerland  
 \\
 \href{nicola.paddeu@unifr.ch}{nicola.paddeu@unifr.ch}}
}

\date{Sep 2023}

\pagestyle{plain}
\usepackage[english]{babel}
\usepackage[utf8]{inputenc}
\usepackage{libertine}
\usepackage{graphicx}
\usepackage{floatflt}
\usepackage{blindtext}
\usepackage{enumitem}
\usepackage{amsthm}
\usepackage{subfig}
\usepackage{listings}
\usepackage{listingsutf8}
\usepackage{amsmath}
\usepackage{framed}
\usepackage{minibox}
\usepackage{float}
\usepackage{wrapfig}
\usepackage{longtable}
\usepackage[strict]{changepage}
\usepackage{pgfplots}
\usepackage{nicefrac}
\usepackage{units}
\usepackage{etoolbox,tikz}
\usepackage{bbm}
\usepackage{hyperref}

\usetikzlibrary{external}
\tikzexternalize
\usetikzlibrary{cd}

\AtBeginEnvironment{tikzcd}{\tikzexternaldisable}
\AtEndEnvironment{tikzcd}{\tikzexternalenable}

\usepackage{amsmath}
\usepackage{amsfonts}

\usepackage{amsthm}
\usepackage {amsmath, amssymb}
\usepackage{pb-diagram}
\usepackage{tikz-cd}
\usepackage{mathtools}
\usepackage{amssymb}

\usetikzlibrary{matrix}
\pgfplotsset{width=11cm,compat=1.9}
\usepgfplotslibrary{external}
\tikzexternalize

\newtheorem{Theorem}{Theorem}[section]
\newtheorem{defi}[Theorem]{Definition}
\newtheorem{Prop}[Theorem]{Proposition}
\newtheorem{Cor}[Theorem]{Corollary}
\newtheorem{lemma}[Theorem]{Lemma}
\newtheorem{Remark}[Theorem]{Remark}
\newtheorem{Conjecture}[Theorem]{Conjecture}
\theoremstyle{plain}
\numberwithin{figure}{section}

\renewcommand\labelenumi{(\roman{enumi})}
\renewcommand\theenumi\labelenumi

\DeclareMathOperator{\Span}{Span}
\DeclareMathOperator{\id}{id}

\DeclareMathOperator{\Vecc}{Vec}

\DeclareMathOperator{\Ad}{Ad}

\DeclareMathOperator{\Eng}{Eng}

\DeclareMathOperator{\Lie}{Lie}

\DeclareMathOperator{\Length}{Length}

\DeclareMathOperator{\hor}{hor}

\newcommand{\df}{\mathrm{d}}
\newcommand{\rr}{\rightarrow}

\newcommand{\sslash}{\mathbin{/\mkern-6mu/}}
\newcommand{\Symp}[3]{#1 \sslash_{#3}#2}

\begin{document}
\maketitle

\begin{abstract}
We consider nilpotent Lie groups for which the derived subgroup is abelian. We equip them with subRiemannian metrics and we study the normal Hamiltonian flow on the cotangent bundle.
We show a correspondence between normal trajectories and polynomial Hamiltonians in some euclidean space. We use the aforementioned correspondence to give a criterion for the integrability of the normal Hamiltonian flow. As an immediate consequence, we show that in Engel-type groups the flow of the normal Hamiltonian is integrable.
For Carnot groups that are semidirect products of two abelian groups, we give a set of conditions that normal trajectories must fulfill to be globally length-minimizing. 
Our results are based on a symplectic reduction procedure.
\end{abstract}

\tableofcontents

\section{Introduction}
In this paper we focus on nilpotent Lie groups, equipped with left-invariant subRiemannian metrics. Roughly speaking, we fix a left-invariant \emph{distribution}, i.e., a subbundle of the tangent bundle, and we define the distance between two points to be the infimum of absolutely continuous curves that are tangent to the distribution almost everywhere and join the two points. The length is measured with respect to some left-invariant Riemannian metric. Our aim is to study length-minimizing curves in \emph{metabelian} groups, that is, groups in which the derived subgroup is abelian.
We will focus on \emph{normal trajectories}, locally length-minimizing curves admitting a lift to the cotangent bundle of the group, called \emph{normal extremal}, solving the Hamilton equation for a smooth Hamiltonian, the \emph{normal Hamiltonian}, see \eqref{e defnormalhamiltonian}. Although not all length-minimizing curves are normal trajectories \cite{montgomery1994abnormal}, in every subRiemannian manifold there exists an open dense set whose points are connected to the origin by a length-minimizing normal trajectory \cite[Theorem 1]{agrachev2009sR_manifold_have_points_of_smoothness}. As a consequence, understanding the flow of the normal Hamiltonian is an important step in understanding the subRiemannian distance on subRiemannian nilpotent groups. The main idea of this paper is that the normal Hamiltonian associated to a left-invariant subRiemannian structure on a Lie group is invariant under a canonical action of the Lie group on its cotangent bundle. The symmetries encoded by this action are related to the presence of prime integrals of the normal Hamiltonian flow 
\cite{noether1971invariant}, and the presence of this prime integrals simplifies the study of normal trajectories. \\
We develop the idea described above using the formalism of symplectic reductions, first introduced by Marsden and Weinstein \cite{Sym-reduc}. We restrict to the action of an abelian subgroup of the Lie group on its cotangent bundle to avoid the technicalities that would arise from the action of a non-commutative group.

\begin{Theorem}\emph{(After Marsden-Weinstein, \cite[Theorem 1]{Sym-reduc})}
Let $G$ be a  
Lie group with left-invariant subRiemannian structure, let 
$A<G$ be an abelian subgroup. 
Then, the action of $A$ on $G$ by left multiplication induces a Hamiltonian action of $A$ on $T^*G$, with momentum $J:T^*G\rr \Lie(A)^*$, that fixes the normal Hamiltonian $H$. For every $\mu\in\Lie(A)^*$ the symplectic reduction $\Symp{T^*G}{A}{\mu}$ 
is a smooth manifold admitting a unique symplectic form $\omega_\mu$ such that $\Pi_\mu^*\omega_\mu= i^*\omega$, where $\Pi_\mu:J^{-1}(\mu)\rr \Symp{T^*G}{A}{\mu}$ is the canonical projection, $i:J^{-1}(\mu)\rr T^*G$ is the inclusion and $\omega$ is the standard symplectic form of $T^*G$. 
     \label{t existence_symplectic_reduction}
In particular, for every normal extremal $\lambda:[0,1]\rr T^*G$ there exists $\mu\in\Lie(A)^*$ such that 
$J\circ\lambda=\mu$ and the projection of $\lambda$ to the symplectic reduction $\Symp{T^*G}{A}{\mu}$ solves the Hamilton equation for the reduced Hamiltonian $H_\mu$. Conversely, every curve in a symplectic reduction $\Symp{T^*G}{A}{\mu}$, with $\mu\in\Lie(A)^*$, solving the Hamilton equations for $H_\mu$, 
is the projection of some normal extremal.
\label{c correspondence_normal_curves}
 \end{Theorem}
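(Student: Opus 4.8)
The plan is to present this as a direct application of the Marsden--Weinstein reduction theorem, so the real content is to verify its hypotheses in the present geometric setting and then transport the dynamics. First I would recall that the cotangent lift to $T^*G$ of any smooth $A$-action on $G$ is automatically a Hamiltonian action: for the cotangent lift of left translations the canonical momentum map is
$$J(\alpha_q)(\xi) = \langle \alpha_q, \xi_G(q)\rangle, \qquad \xi\in\Lie(A),$$
where $\xi_G$ denotes the fundamental vector field of $\xi$. Since $A$ is abelian the coadjoint action is trivial, hence $J$ is $A$-invariant (equivalently, equivariant). Left-invariance of the subRiemannian structure forces the normal Hamiltonian $H$ to be invariant under the cotangent lift of every left translation, and in particular under the $A$-action; by Noether's theorem this is equivalent to $\{J_\xi,H\}=0$ for every $\xi$, so each component of $J$ is a constant of motion for the flow of $X_H$.

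The second step is to check the regularity hypotheses. Left multiplication by elements of $A$ is free and, assuming $A$ closed, proper, so the lifted action on $T^*G$ is free and proper as well. Freeness means the vectors $\xi_G(q)$ are linearly independent for $\xi\neq 0$; the restriction of the differential of $J$ to a cotangent fiber $T_q^*G$ is precisely evaluation against these vectors, which is therefore surjective onto $\Lie(A)^*$. Hence $J$ is a submersion, every $\mu\in\Lie(A)^*$ is a regular value, and $J^{-1}(\mu)$ is a smooth submanifold. Triviality of the coadjoint action gives $A_\mu=A$, which acts freely and properly on $J^{-1}(\mu)$; the Marsden--Weinstein theorem then yields that $\Symp{T^*G}{A}{\mu}$ is a smooth manifold carrying a unique symplectic form $\omega_\mu$ with $\Pi_\mu^*\omega_\mu=i^*\omega$.

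For the dynamical correspondence I would use that $A$-invariance lets $H$ descend to a smooth function $H_\mu$ on the quotient, and that the projection $\Pi_\mu$ intertwines $X_H|_{J^{-1}(\mu)}$ with $X_{H_\mu}$. Given a normal extremal $\lambda$, the conservation of $J$ established above gives $J\circ\lambda\equiv\mu$ for some $\mu$, so $\lambda$ stays in $J^{-1}(\mu)$ and $\Pi_\mu\circ\lambda$ solves the Hamilton equations for $H_\mu$. Conversely, given a solution $\gamma$ of the reduced equations, I would pick any $\lambda_0\in J^{-1}(\mu)$ over $\gamma(0)$ and let $\lambda$ be the integral curve of $X_H$ through $\lambda_0$; conservation of $J$ keeps $\lambda$ inside $J^{-1}(\mu)$, and the intertwining together with uniqueness of integral curves identifies $\Pi_\mu\circ\lambda$ with $\gamma$, so $\lambda$ is the sought normal extremal.

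The main obstacle is the smoothness of the reduced space, which is exactly where Marsden--Weinstein needs regularity: the crux is to observe that freeness of left multiplication simultaneously upgrades to freeness and properness of the cotangent-lifted action and forces $J$ to be a submersion, so that no singular reduction is required. Once this is in place the remaining assertions are formal consequences of the reduction machinery, the only care being the standing assumption that $A$ is a closed subgroup, so that $\Lie(A)$ is defined and the action is proper.
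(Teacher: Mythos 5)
Your proposal is correct and follows essentially the same route as the paper: identify the cotangent-lifted action as Hamiltonian with momentum $J(\lambda)(\xi)=\langle\lambda,\xi_G\rangle$ (the paper's right-invariant extension formula), use abelianness of $A$ for coadjoint invariance, invoke Marsden--Weinstein for the smooth reduced space, and obtain both directions of the dynamical correspondence from $A$-invariance of $H$, conservation of $J$, and the intertwining of $\phi_H^t$ with $\phi_{H_\mu}^t$. The only difference is one of bookkeeping: you verify the freeness/properness/submersion hypotheses explicitly (correctly flagging that $A$ should be closed for properness), whereas the paper outsources these to its preliminary Section 2 and the cited reduction theorems.
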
    
In the setting of Theorem \ref{c correspondence_normal_curves}, since the momentum is constant along normal extremals, we will say that a normal extremal $\lambda:[0,1]\rr T^*G$ \emph{has momentum} $\mu\in\Lie(A)^*$ if 
$J\circ\lambda=\mu$.
We apply the result in Theorem \ref{c correspondence_normal_curves} to 
metabelian simply connected nilpotent Lie groups. For these groups, we give an explicit description of the reduced normal Hamiltonians that arise performing symplectic reductions in terms of a \emph{connection 1-form}, see Definition \ref{d connection-1-form}. 
This will give a natural generalization of a phenomenon that occurs in the Heisemberg group and in the Engel group. Indeed, it is known that in the first group the curvature of normal trajectories is constant, whereas in the latter group it is proportional to a linear function of the projection of the curve to the abelianization of the group. Both this conditions can be expressed saying that the projections of normal extremals to some symplectic reduction solve the Hamilton equations for the Hamiltonian of a particle in an electro-magnetic field. In this paper we show that the latter statement holds for every metabelian simply connected nilpotent Lie group. We stress that metabelian nilpotent groups constitute a wide class of examples, as for instance all step $3$ nilpotent groups and all jet spaces are metabelian and nilpotent.\\
Before stating the main result of this paper we introduce some notation. Let $G$ be a simply connected nilpotent Lie group with left-invariant distribution $\Delta$ and left-invariant Riemannian metric. Let $A$ be an abelian subgroup of $G$ with $[G,G]\subseteq A$. Then we have that $A$ is normal in $G$ (we use the notation $A\triangleleft G$). Moreover, $G/A$ admits a canonical metric, induced by a scalar product, that makes the projection $\Pi:G\to G/A$ a submetry. We denote with $\pi:T^*(G/A)\rr G/A$ the canonical projection. We write $||\cdot||_{T^*(G/A)}$ for the dual norm induced by the scalar product in $T(G/A)$ on $T^*(G/A)$, and we use $||\cdot||_A$ to denote the dual norm induced by the left-invariant scalar product on the distribution $\Delta$ on the left-invariant extension of $(\Lie(A) \cap \Delta_1)^*$ (we identify $\Lie(A)$ with $T_1A$). If $\eta\in \Omega^1(G,\mathbb{R})$ is a $1$-form, we write $\eta_A$ for the restriction of $\eta$ to the left-invariant extension of $\Lie(A)\cap \Delta_1$. For $\alpha\in \Omega^1(T^*G,\Lie(A))$, we define  $\Bar{\alpha}\in\Omega^{1}(G/A,\Lie(A))$ setting $\Bar{\alpha}_{Ag}(\df_g\Pi X):=\alpha_g(X)$ for all $g\in G$, for all $X\in \df L_g (\Lie(A)\cap \Delta_1)^{\perp}$ (remark that $\Bar{\alpha}$ is well defined since $\Lie(G)=\Lie(A)\oplus (\Lie(A)\cap \Delta_1)^{\perp}$).
 \begin{Theorem}
Let $G$ be a metabelian simply connected nilpotent Lie group, equipped with a left-invariant subRiemannian structure. Let $A\triangleleft G$ be an abelian subgroup with $[G,G]\subseteq A$. Then there exists an $A$-invariant $\Lie(A)$-valued $1$-form $\alpha\in\Omega^1(G,\Lie(A))$ for which the following holds: for all $\mu\in\Lie(A)^*$ there is a symplectomorphism $\varphi_\mu:\Symp{T^*G}{A}{\mu} \rr T^*(G/A)$ such that, if $H_\mu\in C^\infty(\Symp{T^*G}{A}{\mu})$ is the reduced normal Hamiltonian, there holds
\begin{equation}
    (H_\mu\circ \varphi_\mu^{-1})(\eta)=\frac{1}{2} ||\eta+ \langle\mu,\Bar{\alpha}\rangle(\pi(\eta))||_{T^*(G/A)}^2 + \frac{1}{2} ||\langle\mu,\alpha\rangle_A(q)||_A^2,
    \label{e formofHmu0}
\end{equation}
for all $\eta\in T^*(G/A)$ and
$q\in\Pi^{-1}(\pi(\eta))$.  In particular, there is a correspondence between normal extremals in $T^*G$ with momentum $\mu$ and lifts of curves in $T^*(G/A)$ solving the Hamilton equation for $H_\mu\circ\varphi_\mu^{-1}$.

     \label{t formofHmu}
\end{Theorem}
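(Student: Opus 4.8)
The plan is to exhibit $\Pi\colon G\rr G/A$ as a trivial principal $A$-bundle, to recognise the form $\alpha$ of Definition~\ref{d connection-1-form} as a principal connection on it, and then to run the connection-dependent version of cotangent bundle reduction (a refinement of \cite{Sym-reduc}); the two summands of \eqref{e formofHmu0} will come from splitting the distribution into its horizontal and vertical parts. Since $A$ is a closed connected normal subgroup of the simply connected nilpotent group $G$, the left action of $A$ is free and proper, $G/A$ is an abelian (hence contractible) group, and $\Pi$ is a trivial principal $A$-bundle. Write $\mathfrak{m}:=(\Lie(A)\cap\Delta_1)^{\perp}\subseteq\Delta_1$ for the orthogonal complement inside $\Delta_1$, so that $\Delta_1=(\Lie(A)\cap\Delta_1)\oplus\mathfrak{m}$ orthogonally and $\Lie(G)=\Lie(A)\oplus\mathfrak{m}$; the left-invariant extension of $\mathfrak{m}$ is the horizontal distribution, it lies inside $\Delta$, and $\df\Pi$ maps it isometrically onto $T(G/A)$ because $\Pi$ is a submetry. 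Here the metabelian hypothesis is essential: as $\Lie(A)$ is abelian, $\Ad_a$ fixes $\Lie(A)$ pointwise for $a\in A$, which is exactly what makes the $A$-invariant extension $\alpha$ a genuine principal connection, i.e.\ one reproducing the generators $\xi_G(g)=\frac{\df}{\df t}\big|_{0}\exp(t\xi)g$ along each fibre; consequently $\bar\alpha\in\Omega^1(G/A,\Lie(A))$ and $\langle\mu,\alpha\rangle_A$ descend to well-defined objects on the base.

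Next I would construct $\varphi_\mu$. Using the splitting $\Lie(G)^*=\Lie(A)^{\circ}\oplus\mathfrak{m}^{\circ}$ dual to the connection, every $p\in J^{-1}(\mu)$ decomposes as $p=\Pi^*\hat p+\langle\mu,\alpha\rangle$, where the constraint $J(p)=\mu$ forces the vertical part to be $\langle\mu,\alpha\rangle$ and $\hat p\in T^*(G/A)$ is the basic part. The assignment $[p]\mapsto\hat p$ is the classical identification $\Symp{T^*G}{A}{\mu}\cong T^*(G/A)$; by cotangent bundle reduction it carries the reduced form $\omega_\mu$ to $\omega_{\mathrm{can}}-\langle\mu,\bar\Omega\rangle$, where $\bar\Omega$ is the curvature of $\alpha$ pushed to the base. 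Since $A$ is abelian one has $\Omega=\df\alpha$, and after descent $\langle\mu,\bar\Omega\rangle=\df\langle\mu,\bar\alpha\rangle$; hence the fibrewise translation $\eta:=\hat p-\langle\mu,\bar\alpha\rangle$ straightens the twisted form back to $\omega_{\mathrm{can}}$. I then set $\varphi_\mu\colon[p]\mapsto\eta$, which is the required symplectomorphism onto $(T^*(G/A),\omega_{\mathrm{can}})$.

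Finally I would compute $H_\mu\circ\varphi_\mu^{-1}$. In the left trivialisation the normal Hamiltonian is $H(p)=\tfrac12\|p|_{\Delta_1}\|_*^2$, and the orthogonality of $\Delta_1=\mathfrak{m}\oplus(\Lie(A)\cap\Delta_1)$ makes the dual norm split with no cross term, $\|p|_{\Delta_1}\|_*^2=\|p|_{\mathfrak{m}}\|_*^2+\|p|_{\Lie(A)\cap\Delta_1}\|_A^2$. On $J^{-1}(\mu)$ the second restriction is the fixed covector $\langle\mu,\alpha\rangle_A(q)$, producing the potential $\tfrac12\|\langle\mu,\alpha\rangle_A(q)\|_A^2$; the first restriction equals $\hat p=\eta+\langle\mu,\bar\alpha\rangle$ read through the isometry $\df\Pi|_{\mathfrak{m}}$, producing $\tfrac12\|\eta+\langle\mu,\bar\alpha\rangle(\pi(\eta))\|_{T^*(G/A)}^2$. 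Summing yields \eqref{e formofHmu0}, and the correspondence between normal extremals of momentum $\mu$ and solutions of the Hamilton equation for $H_\mu\circ\varphi_\mu^{-1}$ is then immediate from Theorem~\ref{c correspondence_normal_curves}.

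I expect the main obstacle to be the reduction/symplectomorphism step together with the identification of the magnetic potential: one must verify that the reduced form is canonical-minus-curvature, that the abelian curvature $\df\alpha$ descends to the \emph{exact} form $\df\langle\mu,\bar\alpha\rangle$, and that the fibrewise shift by $\langle\mu,\bar\alpha\rangle$ turns the kinetic term into the minimal-coupling expression, all while tracking the left-action/adjoint conventions entering the momentum map $J(p)=(\Ad_{g^{-1}}^*p)|_{\Lie(A)}$. The metabelian hypothesis, through $\Ad_a|_{\Lie(A)}=\id$, is precisely what keeps these computations closed and guarantees that $\langle\mu,\alpha\rangle_A(q)$ depends only on $\pi(\eta)$, so that the potential term is genuinely a function on the base.
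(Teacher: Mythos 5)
Your overall framework is the same one the paper uses (connection-dependent cotangent bundle reduction, the decomposition $p=\Pi^*\hat p+\langle\mu,\alpha\rangle$, a fiberwise shift, then minimal coupling; compare Theorem~\ref{t reduction_with_connection_1-form}), but there is a genuine gap in your choice of connection and in the step that ``straightens'' the reduced symplectic form. You take the horizontal distribution to be the \emph{left-invariant} extension of $\mathfrak{m}=(\Lie(A)\cap\Delta_1)^{\perp}$. Since $[\mathfrak{m},\mathfrak{m}]\subseteq[\Lie(G),\Lie(G)]\subseteq\Lie(A)$ and $\Lie(A)\cap\mathfrak{m}=\{0\}$, this distribution is integrable if and only if $\mathfrak{m}$ is an \emph{abelian} sub-algebra, which fails in general: in the Cartan group $F_{23}$ one has $\mathfrak{m}=\Span\{X_1,X_2\}$ and $[X_1,X_2]=Y_1\neq 0$. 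So the curvature of your connection does not vanish and the reduction lands in $\left(T^*(G/A),\omega_{\text{can}}-\langle\mu,\Bar{\Omega}\rangle\right)$ with $\Bar{\Omega}\neq 0$. Your repair, the identity $\langle\mu,\Bar{\Omega}\rangle=\df\langle\mu,\Bar{\alpha}\rangle$ followed by the shift $\hat p\mapsto\hat p-\langle\mu,\Bar{\alpha}\rangle$, is false for your $\alpha$: by the definition of $\Bar{\alpha}$ given before Theorem~\ref{t formofHmu}, $\Bar{\alpha}$ is the push-down of the restriction of $\alpha$ to the left-invariant extension of $\mathfrak{m}$, and for your connection that distribution is exactly $\ker(\alpha)$, so $\Bar{\alpha}\equiv 0$ while $\Bar{\Omega}\neq 0$. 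The underlying confusion is that a connection form is never basic (it restricts to the identity on vertical generators), so $\alpha$ itself does not descend to $G/A$; only $\df\alpha$ does, and its descent is not $\df$ of any ``descent of $\alpha$''. What your argument actually produces is a symplectomorphism onto the \emph{magnetic} cotangent bundle with a kinetic-plus-potential Hamiltonian and no vector potential. In $F_{23}$, where $\Lie(A)\cap\Delta_1=\{0\}$, this would assert that the reduced dynamics is free motion on the canonical $T^*\mathbb{R}^2$, contradicting \eqref{e formofHmu} and the paper's $F_{23}$ example: there the vector potential $a_1x_1+a_2\frac{x_1^2}{2}+a_3x_1x_2$ has nonzero magnetic field, hence cannot be removed by any fiber translation of the canonical $T^*\mathbb{R}^2$.

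The missing idea is precisely the paper's Lemma~\ref{l integrable_connection}: instead of the metric (left-invariant) horizontal distribution, one takes the \emph{flat} connection induced by the global trivialization $\Phi(X,v)=\exp(X)\exp(v)$ coming from a Malcev basis, whose kernel is tangent to the leaves $\exp(X)\exp(\mathcal{H})$ and is therefore integrable. By Lemma~\ref{l 0curvature} the magnetic term then vanishes, so $\varphi_\mu=\Bar{\psi}_0\circ\Bar{s}_\mu$ is a symplectomorphism onto $T^*(G/A)$ with its canonical form, and for \emph{that} connection $\Bar{\alpha}$ is nonzero exactly because $\ker(\alpha)$ differs from the left-invariant extension of $\mathfrak{m}$; this mismatch is the vector potential in \eqref{e formofHmu0}. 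Your construction does prove the theorem under the extra hypothesis that $\mathfrak{m}$ is an abelian sub-algebra (not coincidentally, the hypothesis appearing in Corollary~\ref{c metric_lines}, under which the $\mathcal{A}_i$'s vanish); to handle the general case you would have to produce a $\Lie(A)$-valued, $\mu$-independent primitive of $\Bar{\Omega}$ realized by an $A$-invariant $1$-form on $G$, which is essentially equivalent to the paper's construction.
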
  

The main idea in the proof of Theorem \ref{t formofHmu} is that we can trivialize the structure of $A$-principal bundle of $G$. Using this trivialization, we define a closed connection $1$-form $\alpha$. 
With this choice of $\alpha$, the shift $s_\mu:T^*G\rr T^*G$, defined by $s_\mu(\lambda):= \lambda-\langle\mu,\alpha\rangle$, for all $\lambda\in T^*G$, is a symplectomorphism. We define $\varphi_\mu$ as the composition of two maps: a map $\Bar{s}_\mu:\Symp{T^*G}{A}{\mu}\rr \Symp{T^*G}{A}{0}$ induced by $s_\mu|_{J^{-1}(\mu)}$, and a canonical symplectomorphism $\Bar{\psi}_0:\Symp{T^*G}{A}{0}\rr T^*(G/A)$. We then prove \eqref{e formofHmu0} using the explicit formulas of $s_\mu$, $\Bar{\psi}_0$ and of the normal Hamiltonian. \\ 

When we will need to perform explicit computations, it will be convenient to fix a system of coordinates. We choose particular exponential coordinates of second type $\Bar{\Phi}:\mathbb{R}^n\rr G/A$, see \eqref{e coord_quotient},
and we prove that there exist polynomial maps $\mathcal{A}_1,...,\mathcal{A}_{n},\beta_1,...,\beta_{n_1}:\mathbb{R}^{n} \rr \Lie(A)$ for which equation \eqref{e formofHmu0} rewrites as
     \begin{equation}
          \Tilde{H}_\mu(p,x)= \frac{1}{2} \sum_1^n| p_i+ \langle \mu,\mathcal{A}_i(x)\rangle|^2+ \frac{1}{2} \sum_1^{n_1}|\langle\mu ,\beta_l(x)\rangle|^2,  \forall (p,x)\in T^*\mathbb{R}^{n},
          \label{e formofHmu}
     \end{equation}
where $\Tilde{H}_\mu:=(H_\mu\circ \varphi_\mu^{-1}\circ T^*\Bar{\Phi}^{-1})$. To approach various problems related to normal trajectories we will either directly apply Theorem \ref{t formofHmu} or use the formulation in coordinates given by Equation \eqref{e formofHmu}. The first problem we address is the study of the integrability of the normal Hamiltonian flow. We prove that if the Hamiltonian flow of each reduction of the normal Hamiltonian is integrable and some smoothness condition on the prime integrals holds, then the flow of the normal Hamiltonian is integrable. 
We apply this result to prove the integrability of the normal Hamiltonian flow in nilpotent groups having an abelian subgroup of co-dimension $1$ and in groups of \emph{Engel-type}, see Section \ref{s engel_type} or \cite[Section 5]{ledonnemoisala}.
\begin{Cor}
Let $G$ be metabelian simply connected nilpotent Lie group, with left-invariant subRiemannian structure. Let $A\triangleleft G$ be an abelian subgroup containing $[G,G]$. Let $\varphi_\mu:\Symp{T^*G}{A}{\mu}\rr T^*(G/A)$ be the symplectomorphism coming from Theorem~\ref{t formofHmu}. Assume there exist smooth functions $f_1,...,f_{\dim(G)-\dim(A)}:T^*(G/A)\times \Lie(A)^*\rr \mathbb{R}$ such that, for all $\mu\in\Lie(A)^*$, the functions $f_1(\cdot,\mu),...,f_{\dim(G)-\dim(A)}(\cdot,\mu)\in C^{\infty}(T^*(G/A))$ are a set of independent prime integrals for $H_\mu\circ\varphi_\mu^{-1}$ that are in involution. Then the normal Hamiltonian flow is Arnold-Liouville integrable. As a consequence, if $\dim(A)=\dim(G)-1$ then the normal Hamiltonian flow is Arnold-Liouville integrable.
\label{c consequences}
\end{Cor}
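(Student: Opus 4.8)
The plan is to exhibit $\dim(G)$ functionally independent prime integrals of the normal Hamiltonian $H$ that pairwise Poisson-commute, generically on $T^*G$. Half of them come from the momentum map itself: fixing a basis $\xi_1,\dots,\xi_m$ of $\Lie(A)$ (where $m:=\dim(A)$), the components $J_k:=\langle J(\cdot),\xi_k\rangle$ are prime integrals of $H$ because the $A$-action fixes $H$ (Theorem~\ref{c correspondence_normal_curves}), and they are pairwise in involution since $\{J_i,J_k\}=\langle J,[\xi_i,\xi_k]\rangle=0$, the bracket $[\xi_i,\xi_k]$ vanishing as $\Lie(A)$ is abelian. Because left multiplication by a nontrivial element of $A$ on the simply connected nilpotent group $G$ has no fixed points, the action is free, its cotangent lift is free, and $J$ is a submersion; hence $dJ_1,\dots,dJ_m$ are everywhere linearly independent. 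It remains to build the other $n:=\dim(G)-\dim(A)$ integrals out of the hypothesised functions $f_i$.

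First I would assemble the $f_i$ into globally defined smooth functions on $T^*G$. Recalling from the proof sketch of Theorem~\ref{t formofHmu} that $\varphi_\mu=\Bar{\psi}_0\circ\Bar{s}_\mu$, with $\Bar{s}_\mu$ induced by the shift $s_\mu(\lambda)=\lambda-\langle\mu,\alpha\rangle$ (so that $\Bar{s}_\mu\circ\Pi_\mu=\Pi_0\circ s_\mu$, using that $\alpha$ is $A$-invariant), I define the single map $\Psi:T^*G\rr T^*(G/A)$ by $\Psi(\lambda):=(\Bar{\psi}_0\circ\Pi_0)(\lambda-\langle J(\lambda),\alpha\rangle)$ and check the identity $\Psi(\lambda)=\varphi_{J(\lambda)}(\Pi_{J(\lambda)}(\lambda))$. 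Since $J$, $\alpha$, $\Pi_0$ and $\Bar{\psi}_0$ are smooth, $\Psi$ is smooth; therefore, using precisely the \emph{joint} smoothness of the $f_i$ in $(\eta,\mu)$, the functions $F_i(\lambda):=f_i(\Psi(\lambda),J(\lambda))$ are smooth on all of $T^*G$. By construction each $F_i$ is $A$-invariant and its restriction to $J^{-1}(\mu)$ descends through $\Pi_\mu$ to $f_i(\cdot,\mu)\circ\varphi_\mu$ on $\Symp{T^*G}{A}{\mu}$.

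Next I would verify the three required properties. Each $F_i$ is a prime integral of $H$: along a normal extremal $\lambda(t)$ the momentum is a constant $\mu$, the curve $\Psi(\lambda(t))=\varphi_\mu(\Pi_\mu(\lambda(t)))$ solves the Hamilton equation for $H_\mu\circ\varphi_\mu^{-1}$ by Theorem~\ref{c correspondence_normal_curves}, and $f_i(\cdot,\mu)$ is constant along that flow. Involution with the momenta, $\{F_i,J_k\}=0$, is immediate from $A$-invariance, since the Hamiltonian vector field of $J_k$ generates the $A$-action. The crux of the argument---and the step I expect to be the main obstacle---is the pairwise involution $\{F_i,F_j\}=0$ \emph{globally}, which I would obtain from the compatibility of the Poisson bracket with symplectic reduction: as $F_i,F_j$ and $\omega$ are $A$-invariant, so is $\{F_i,F_j\}$, and the identity $\Pi_\mu^*\omega_\mu=i^*\omega$ yields $\overline{\{F_i,F_j\}}_\mu=\{\overline{(F_i)}_\mu,\overline{(F_j)}_\mu\}_{\Symp{T^*G}{A}{\mu}}=\{f_i(\cdot,\mu),f_j(\cdot,\mu)\}_{T^*(G/A)}\circ\varphi_\mu=0$, the last equality because $\varphi_\mu$ is a symplectomorphism and the $f_i(\cdot,\mu)$ are in involution. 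Thus $\{F_i,F_j\}$ vanishes on every level set $J^{-1}(\mu)$, hence identically. Finally, for functional independence I would restrict a linear relation $\sum_k a_k\,dJ_k+\sum_i b_i\,dF_i=0$ at $\lambda$ to $T_\lambda J^{-1}(\mu)=\ker dJ_\lambda$: the $dJ_k$ drop out, and what remains is the $\Pi_\mu$-pullback of $\sum_i b_i\,d(f_i(\cdot,\mu)\circ\varphi_\mu)$, independent on an open dense set because $\Pi_\mu^*$ is injective on covectors, $\varphi_\mu$ is a diffeomorphism, and the $f_i(\cdot,\mu)$ are independent there; hence all $b_i=0$, and then all $a_k=0$. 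This produces $\dim(G)$ functionally independent prime integrals in involution on an open dense subset, i.e.\ Arnold--Liouville integrability.

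For the last assertion, when $\dim(A)=\dim(G)-1$ the base $G/A$ is one-dimensional, so $T^*(G/A)$ is a two-dimensional symplectic manifold and $n=1$. I would then simply take $f_1(\eta,\mu):=(H_\mu\circ\varphi_\mu^{-1})(\eta)$, which is jointly smooth in $(\eta,\mu)$ by the explicit formula \eqref{e formofHmu0}, is trivially its own prime integral and is in involution (being a single function), and has nonvanishing differential on an open dense set. The hypotheses of the statement just proved are met, so the normal Hamiltonian flow is Arnold--Liouville integrable.
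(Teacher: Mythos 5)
Your proposal is correct and follows essentially the same route as the paper: the same family of integrals (the momentum components $J_k$ together with functions $F_i$ obtained by assembling the $f_i$ through the map $\lambda\mapsto(\varphi_{J(\lambda)}\circ\Pi_{J(\lambda)})(\lambda)$, which coincides with the paper's $P_\mu$ built from the shift $s_\mu$ and $\psi_0$), the same use of Poisson-bracket compatibility with reduction for involution (which the paper delegates to a cited proposition of Ratiu et al.), and the same choice $f_1=H_\mu\circ\varphi_\mu^{-1}$ for the codimension-one case. The only differences are cosmetic: you spell out the independence argument the paper calls an exercise, and you verify the prime-integral property dynamically rather than via the bracket-reduction identity.
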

\begin{Cor}
The normal Hamiltonian flow in Engel-type groups is Arnold-Liouville integrable. 
\label{c Eng}
\end{Cor}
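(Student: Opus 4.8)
The plan is to reduce the problem to an application of Corollary~\ref{c consequences}: once we exhibit, for a convenient abelian $A\triangleleft G$ with $[G,G]\subseteq A$, a family of prime integrals of the reduced Hamiltonian that is smooth in $\mu$, independent and in involution, Arnold-Liouville integrability follows at once. For those Engel-type groups that carry an abelian subgroup of codimension one containing $[G,G]$ --- for instance the Engel group, where one may take the subgroup whose Lie algebra is spanned by $[\mathfrak{g},\mathfrak{g}]$ together with a horizontal generator acting trivially on it --- there is nothing to do, since this is the last assertion of Corollary~\ref{c consequences}. The content of the statement thus lies in the remaining Engel-type groups, for which I would construct the integrals explicitly on the reduced space.

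To this end I would take $A:=\exp([\mathfrak{g},\mathfrak{g}])$, which is abelian because $G$ is metabelian, contains $[G,G]$ and is normal; the base $G/A$ is then the abelianization of $G$, a Euclidean space whose dimension $k=\dim(G)-\dim(A)$ is the rank of $G$. Theorem~\ref{t formofHmu}, read in the exponential coordinates of the second type of \eqref{e formofHmu}, presents the reduced Hamiltonian as the electromagnetic Hamiltonian
\[
\Tilde{H}_\mu(p,x)=\frac12\sum_{i=1}^{k}\big|p_i+\langle\mu,\mathcal{A}_i(x)\rangle\big|^2+\frac12\sum_{l=1}^{n_1}\big|\langle\mu,\beta_l(x)\rangle\big|^2,
\]
with $\mathcal{A}_i$ and $\beta_l$ polynomial in $x$ and linear in $\mu$. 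On the $2k$-dimensional manifold $T^*(G/A)$ I need $k$ functions in involution: the reduced Hamiltonian itself furnishes one, and the remaining $k-1$ I would obtain as Noether momenta of the translation symmetries that the Engel-type bracket relations force on $\Tilde{H}_\mu$.

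The guiding computation is the Engel group, where $n_1=0$ and $\Tilde{H}_\mu(p,x)=\tfrac12\big(p_1^2+|p_2+\mu_1 x_1+\tfrac12\mu_2 x_1^2|^2\big)$; this is independent of $x_2$, so $p_2$ is a prime integral with $\{\Tilde{H}_\mu,p_2\}=0$ and $\{p_2,p_2\}=0$. In general the plan is to show, from the Engel-type structure alone, that there is a $(k-1)$-dimensional subspace of the abelianization whose translations preserve the connection coefficients $\mathcal{A}_i$ and the potential functions $\beta_l$; the corresponding momenta then Poisson-commute with each other and, by invariance, with $\Tilde{H}_\mu$. Because $\mathcal{A}_i$ and $\beta_l$ are polynomial in $\mu$, the functions $f_1:=\Tilde{H}_\mu$ and $f_2,\dots,f_k$ given by these momenta depend smoothly on $\mu$, and they are independent on the open dense set where $\df\Tilde{H}_\mu$ is not a linear combination of $\df f_2,\dots,\df f_k$. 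Corollary~\ref{c consequences} then gives the claim.

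The hard part is precisely this structural step: extracting from the Engel-type bracket relations the existence of $k-1$ independent translations of the abelianization that preserve $\Tilde{H}_\mu$, or equivalently the degeneracy of the curvature of the $1$-form $\alpha$ of Theorem~\ref{t formofHmu} along these directions. In a general metabelian group $\Tilde{H}_\mu$ need not be invariant under any translation, so that no coordinate momentum is conserved and $\Tilde{H}_\mu$ alone does not suffice for integrability; it is the special degeneracy of the Engel-type curvature that makes the translation momenta conserved and hence a complete involutive family together with $\Tilde{H}_\mu$. The point to watch is that the invariant directions, and thus the integrals, may a priori depend on $\mu$, so they must be chosen so that the resulting family is simultaneously complete, in involution, independent, and smooth in $\mu$; with this secured, the remaining checks are routine and Corollary~\ref{c consequences} applies.
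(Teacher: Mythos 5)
Your high-level strategy---apply Corollary \ref{c consequences} after producing, for every $\mu$, a complete involutive family of prime integrals of the reduced Hamiltonian depending smoothly on $\mu$---is the same as the paper's, and your remark that the Engel group itself is already covered by the codimension-one case of Corollary \ref{c consequences} is correct. The gap is in the structural step you yourself flag as ``the hard part'': for $\Eng(n)$ with $n\geq 2$, the $(k-1)$-dimensional family of translation symmetries you postulate does not exist. With your choice $A=\exp([\mathfrak{g},\mathfrak{g}])$, the quotient is $\mathbb{R}^{n+1}$ with coordinates $(x_1,\dots,x_n,\theta_0)$, and working out \eqref{e formofHmu} (or \eqref{e formofHmuexamples}) for the frame of Section \ref{s engel_type} gives, for $\mu=\sum_{l=1}^{n+1}a_l\,\df\theta_l$,
\[
\Tilde{H}_\mu(p,x,\theta_0)=\frac{1}{2}\sum_{i=1}^{n}p_i^2+\frac{1}{2}\Big(p_{\theta_0}+\sum_{i=1}^{n}a_ix_i+\frac{a_{n+1}}{2}||x||^2\Big)^2 .
\]
Differentiating along a translation direction $v\in\mathbb{R}^n$ of the $x$-variables produces the factor $\langle v,a+a_{n+1}x\rangle$, which vanishes identically only if $v=0$ whenever $a_{n+1}\neq 0$. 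Hence for generic $\mu$ the only conserved linear momentum is $p_{\theta_0}$, giving two integrals ($\Tilde{H}_\mu$ and $p_{\theta_0}$) where $k=n+1\geq 3$ are needed; translations suffice only in the degenerate case $a_{n+1}=0$, so no family of translation momenta can work for all $\mu$, let alone smoothly in $\mu$ as Corollary \ref{c consequences} requires.

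What repairs this---and what the paper actually uses---is a rotational, not translational, symmetry. Quotienting by the larger abelian subgroup $A=\exp(\Span\{Y_0,\dots,Y_{n+1}\})$ (equivalently, performing your reduction and then the cyclic reduction in $\theta_0$), the reduced Hamiltonian \eqref{e reduced_Hamiltonian_Engel} is a central-force Hamiltonian: for $a_{n+1}\neq 0$ its potential is a function of $||x+a/a_{n+1}||^2$ alone. The missing integrals are the angular momenta about that center,
\[
L_{ij}=p_i\,(a_j+a_{n+1}x_j)-p_j\,(a_i+a_{n+1}x_i),
\]
which are quadratic in the phase-space variables, polynomial (hence smooth) in $\mu$, and conserved; moreover they do not pairwise Poisson-commute, so one must further extract a commuting independent family, e.g.\ $\{H_\mu,L_{12},L_{34},\dots\}$ together with the functions $C_N=\frac{1}{2}\sum_{i\neq j,\ i,j\leq N}L_{ij}^2$. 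This is precisely the content of Section \ref{s engel_type} (which invokes the integrability of central-force systems) and of the explicit computation in Appendix \ref{appendix_eng}. Without replacing your translation momenta by these rotational ones (and dealing with their non-commutativity), the proposal cannot be completed for $n\geq 2$.
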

Corollary \ref{c consequences} provides a general procedure to prove that the normal Hamiltonian flow is Arnold-Liouville integrable. We stress that all jet spaces $J(\mathbb{R},\mathbb{R}^n)$ of smooth functions from $\mathbb{R}$ to $\mathbb{R}^n$ are metabelian and admit an abelian subgroup of dimension 1, thus as a particular case of Corollary \ref{c consequences} we have that the normal Hamiltonian flow in $J(\mathbb{R},\mathbb{R}^n)$ is Arnold-Liouville integrable for all $n\in\mathbb{N}$. We refer to
\cite[Section 6.5]{tour} for a presentation of jet spaces.\\

Using Theorem \ref{t formofHmu} we can also study \emph{metric lines}, that is, globally length-minimizing curves. We focus on Carnot  groups (see Definition \ref{d Carnot}) 
that are semidirect products of two abelian groups. In these groups, we present a class of normal trajectories that cannot be globally length-minimizing. 
The basic idea is to look for conditions that prevent the existence of a $1$-parameter subgroup in the set of \emph{blow-downs} of the normal trajectory (we refer to Section \ref{s normal_metric_lines} for more details). In order to write a precise statement we first need to introduce some notation: if $G$ is a metabelian Carnot group, and $A\triangleleft G$ is a normal subgroup with $[G,G]<A$, we denote with $\Phi:\mathbb{R}^{n+m}\to G$ and $\Bar{\Phi}:\mathbb{R}^n\rr G/A$ the coordinates of second type for which \eqref{e formofHmu} holds. 
We define for all $\mu\in\Lie(A)^*$ the functions $V_\mu,F_{1,\mu},...,F_{n_1,\mu}\in C^\infty(\mathbb{R}^n)$ setting $V_\mu:=\sum_{l=1}^{n_1}\langle\mu,\beta_l\rangle^2$ and $F_{i,\mu}:=\sum_1^{n_1} \langle \mu,\beta_l\rangle\df\theta_i(\Phi^*\beta_l)$, for all $i\in\{1,...,n_1\}$. We use $\Pi_\mu:T^*G\rr \Symp{G}{A}{\mu}$ and $\pi:T^*\mathbb{R}^n\rr \mathbb{R}^n$ to denote the canonical projections.
\begin{Cor}
Let $G$ be a metabelian Carnot group, $A\triangleleft G$ an abelian subgroup with $[G,G]\subseteq A$. Denote with $V_1$ the first layer of $\Lie(G)$. Assume $(\Lie(A)\cap V_1)^\perp$ to be an abelian sub-algebra of $\Lie(G)$. 
Let $\lambda:\mathbb{R}\rr T^*G$ be a normal extremal with momentum $\mu\in\Lie(A)^*$. Define $x:=\pi\circ T^*\Bar{\Phi}\circ\Pi_\mu(\lambda)$, $V_\mu$ and $F_{i,\mu}$ using the notation introduced before the corollary. If one of the two following condition holds,
\begin{enumerate}
    \item there exists a compact set $\Omega\subseteq \mathbb{R}^n$ such that $x(t)\in\Omega $ for all $t\in\mathbb{R}$, and there is an open set $U$, with $\Omega\subseteq U$, such that $\frac{1}{2}$ is a regular value for $V_\mu|_U$;
    \item for some $i\in\{1,...,n_1\}$ the limits $\lim_{t\rr \infty}F_{i,\mu}(x(t))$ and $\lim_{t\rr -\infty}F_{i,\mu}(x(t))$ exist and do not coincide;
\end{enumerate}
then the normal trajectory associated to $\lambda$ is not a metric line.
\label{c metric_lines}
\end{Cor}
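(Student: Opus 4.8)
The plan is to translate the metric-line question into an asymptotic analysis of the reduced mechanical system of Theorem~\ref{t formofHmu} and to obstruct the existence of a $1$-parameter subgroup among the \emph{blow-downs}. In the coordinates of \eqref{e formofHmu} a normal extremal of momentum $\mu$ projects to a solution of $\Tilde{H}_\mu$, a charged particle in the magnetic potential $\mathcal{A}_i$ with scalar potential $\tfrac12 V_\mu$; a length-minimizing parametrization gives $\Tilde{H}_\mu\equiv\tfrac12$, so that the squared speed of $x$ in $G/A$ and the squared fibre speed in $\Lie(A)\cap V_1$ obey a conservation law, the $i$-th fibre velocity being $\dot\theta_i=F_{i,\mu}(x)$ and the total fibre speed squared being $\sum_i F_{i,\mu}^2=V_\mu$. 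The hypothesis that $(\Lie(A)\cap V_1)^\perp$ be abelian is exactly what makes $G$ a semidirect product of two abelian groups and guarantees that this projected system is autonomous in $x$, so that $F_{i,\mu}$ and $V_\mu$ are genuine functions of the projection and the asymptotics of $x$ control the asymptotics of the fibre motion.

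First I would invoke the rigidity of metric lines in Carnot groups: were the trajectory of $\lambda$ a metric line, then among its blow-downs $\lim_{R\rr\infty}\delta_{1/R}\gamma(R\,\cdot)$ there is a horizontal $1$-parameter subgroup $t\mapsto\exp(tX)$ with $|X|=1$. Since $\delta_{1/R}$ acts by $1/R$ on the whole first layer, the generator is recovered from the asymptotic slopes of the first-layer coordinates: its $(\Lie(A)\cap V_1)^\perp$-part is $\lim_s x(s)/s$ and its $\Lie(A)\cap V_1$-part has $i$-th component $\lim_s \tfrac1s\int_0^s F_{i,\mu}(x)\,\df\tau$. Because $\exp(tX)$ has the constant velocity $X$ for every $t$, these limits must exist and coincide whether taken as $s\rr+\infty$ or $s\rr-\infty$. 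The proof then reduces to showing that hypotheses (i) and (ii) make such a coherent generator impossible.

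Case (ii) is immediate: if the two limits of $F_{i,\mu}(x(t))$ exist and differ, then so do the forward and backward asymptotic slopes of $\theta_i$, the $i$-th entry of $X$ is not well defined, and no $1$-parameter subgroup can occur among the blow-downs. For case (i), boundedness of $x$ in $\Omega$ forces $x(s)/s\rr0$, hence $\bar{X}=0$ and $X\in\Lie(A)\cap V_1$; applying Cauchy--Schwarz to the conservation law shows that $|X|=1$ can hold only if $V_\mu(x(t))$ tends to the maximal value compatible with the normalization (which, up to reconciling the constants of \eqref{e formofHmu}, is the level $\{V_\mu=\tfrac12\}$ named in the statement) and each $F_{i,\mu}(x(t))$ tends to the constant $X_i$, forcing $x(t)$ to converge to a rest point $x_\infty$ of the $G/A$-motion on that level set. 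At such a point the magnetic (Lorentz) force vanishes, being proportional to $\dot x=0$, so the equilibrium condition collapses to $\nabla V_\mu(x_\infty)=0$; but $\tfrac12$ being a regular value of $V_\mu$ on $U\supseteq\Omega$ means $\nabla V_\mu$ is nowhere zero there, a contradiction. Thus no pure-fibre subgroup arises, and since boundedness already excludes a subgroup escaping in $G/A$, the trajectory cannot be a metric line.

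The main obstacle is the rigorous bookkeeping of the blow-down at the level of the reduction: one must verify that $\delta_{1/R}$ descends through $\varphi_\mu$ to the fibre coordinates, that the higher-layer part of $\mu$ (which rescales nontrivially) does not spoil the convergence of the relevant rescaled curves, and, above all, that the unit-speed normalization places the limiting rest configuration precisely on the level set $\{V_\mu=\tfrac12\}$ of the hypothesis; matching this constant to the factors in \eqref{e formofHmu} and to the definition of $V_\mu$ is the delicate point. A secondary difficulty is upgrading the abstract blow-down rigidity to the assertion that a genuine $1$-parameter subgroup, rather than a more general self-similar geodesic, appears among the blow-downs, and here the abelian hypothesis on $(\Lie(A)\cap V_1)^\perp$ is precisely what keeps the projected dynamics flat enough for the argument to close. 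Once these points are settled, the asymptotic dichotomy of the second paragraph yields both implications.
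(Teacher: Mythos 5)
Your treatment of condition (ii) is essentially the paper's own argument: with $(\Lie(A)\cap V_1)^\perp$ abelian the $\mathcal{A}_i$'s vanish, \eqref{e liftequation_coordinates} gives $\dot{\theta}_i=F_{i,\mu}(x)$, and differing forward/backward limits of $\dot{\theta}_i$ rule out any linear limit of the rescaled curves $h\,\theta_i(\cdot/h)$, hence any $1$-parameter subgroup among the blow-downs supplied by Proposition \ref{l blowdownmetricline}. That half stands.

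The genuine gap is in condition (i), at the sentence ``forcing $x(t)$ to converge to a rest point $x_\infty$ of the $G/A$-motion on that level set.'' What your Cauchy--Schwarz step actually yields is only a Ces\`aro statement along the blow-down subsequence $R_n=1/h_n$: namely $\frac{1}{R_n}\int_0^{R_n}||\dot{x}(t)||^2\,\df t\rr 0$, i.e.\ the kinetic term vanishes \emph{in time-average along a subsequence}. This does not imply pointwise convergence of $x(t)$ or $\dot{x}(t)$ to anything: a priori the trajectory may hover near the critical level of $V_\mu$ for long stretches and make repeated fast excursions whose total time-fraction is negligible, in which case no limit point need be an equilibrium and the contradiction with $\nabla V_\mu\neq 0$ is never reached. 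Closing precisely this gap is the main technical content of the paper, Lemma \ref{l hill_region} (proved in Appendix \ref{appendix_proof_Hill_region}): where $||\dot{x}||$ is small and the level of $V_\mu$ is regular, one has the uniform concavity $\frac{\df^2}{\df t^2}V_\mu(x(t))\leq -\delta$ (since $||\nabla V_\mu||$ is bounded below there), so each sojourn near the critical level lasts a uniformly bounded time, while each excursion away from it costs a definite amount of kinetic energy; summing sojourns and excursions yields the uniform bound \eqref{e avaregeofxdot}, $\limsup_{T\rr\infty}\frac{1}{T}\int_0^T\sqrt{1-||\dot{x}(t)||^2}\,\df t<1$, which is exactly what contradicts your Ces\`aro limit. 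The paper then converts this averaged bound into non-minimality by an averaged version of your blow-down argument (Lemma \ref{l onecomponentbounded}): boundedness of $x$ forces the limiting line into the fibre directions, and its unit speed forces the above average to equal $1$. So your overall strategy (blow-down rigidity plus energy bookkeeping) coincides with the paper's, and your worries about constants and about descending $\delta_{1/R}$ through the reduction are indeed only bookkeeping; but the step from time-averaged information to a pointwise rest point is not a formality — it is false as stated without a no-hovering estimate, and supplying that estimate is the actual work of the proof. (A smaller point: the identity $\sum_i F_{i,\mu}^2=V_\mu$ you use in the conservation law requires completing $Y_1,\dots,Y_{n_1}$ to a basis of $\Lie(A)$ by vectors in the higher layers, so that $\df\theta_i(\Phi^*\beta_l)=\delta_{il}$ for $i,l\leq n_1$; this should be said.)
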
%
The above corollary was already known for jet spaces, see \cite[Theorem B]{RM-ABD}. The technique used is based on the concept of Hill region and is in some part inspired by \cite{montgomeryhill}.
Corollary \ref{c metric_lines} restricts the family of curves one should deal with when looking for metric lines. For example, if $\mu\in\Lie(A)^*$ is such that $\frac{1}{2}$ is a regular value of $V_\mu$ and the region $V_\mu^{-1}\left(\left[0,\frac{1}{2}\right]\right)$ is compact, then all normal trajectories associated to extremals with momentum $\mu$ are not metric lines. Natural candidates metric lines are normal extremals $\lambda$ for which the curve  $\pi\circ T^*\Bar{\Phi}\circ\Pi_\mu(\lambda)$ converges to critical points of $V_\mu$ as time goes to $\pm\infty$. We conjecture that the associated normal trajectory will be a metric line if and only if condition (ii) of Corollary \ref{c metric_lines} is not satisfied. 
\begin{Conjecture}
In the setting of Corollary \ref{c metric_lines}, 
the normal trajectory associated to $\lambda$ is a metric line if and only if for all $i\in\{1,...,n_1\}$ the limits $\lim_{t \to -\infty} F_{i, \mu}(x(t))$ and $\lim_{t \to +\infty} F_{ i, \mu}(x(t))$ exist and
there holds $$\lim_{t \to -\infty} F_{i, \mu}(x(t)) = \lim_{t \to +\infty} F_{ i, \mu}(x(t)).$$
\end{Conjecture}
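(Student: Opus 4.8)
The statement is an equivalence whose two implications differ greatly in difficulty, so I would treat them separately. The forward implication (if the normal trajectory $\gamma$ associated to $\lambda$ is a metric line, then for every $i$ the two one-sided limits of $F_{i,\mu}(x(t))$ exist and agree) is almost the contrapositive of Corollary~\ref{c metric_lines}(ii): that corollary already excludes metric lines whenever, for some $i$, both limits exist but differ. The only gap is the \emph{existence} of the limits, which the conjecture also asserts. My plan is to close this gap through the blow-down analysis of Section~\ref{s normal_metric_lines}: a metric line in a Carnot group, parametrized by arc length, has blow-downs $\delta_{1/R}\big(\gamma(R\,\cdot)\big)$ that are themselves metric lines, and I would show that the presence of such blow-downs forces the asymptotic drift of $\gamma$ in the $A$-directions — which is exactly what the functions $F_{i,\mu}$ measure along $x(t)$ — to stabilize as $t\to\pm\infty$, so that each one-sided limit exists. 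Combined with Corollary~\ref{c metric_lines}(ii), this would yield the forward direction.

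The reverse implication is the substantial part, and is why the statement is only conjectural. I would first use Theorem~\ref{t formofHmu}, in the coordinate form \eqref{e formofHmu}, to reduce to the dynamics of a particle on $G/A\cong\mathbb{R}^n$ moving in the magnetic potential with components $\langle\mu,\mathcal A_i(x)\rangle$ and scalar potential $\frac{1}{2}V_\mu$, constrained to the energy level $\{\tilde H_\mu=\frac{1}{2}\}$. Assuming the limits of all $F_{i,\mu}(x(t))$ exist and coincide, I expect $x(t)$ to be forced to converge, as $t\to\pm\infty$, to critical points of $V_\mu$ lying on the boundary of the Hill region $V_\mu^{-1}\big([0,\frac{1}{2}]\big)$; the matching condition on the $F_{i,\mu}$ should then guarantee that the holonomy accumulated in the $A$-directions at the two ends is compatible with a single $1$-parameter subgroup. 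The aim of this part is to exhibit, from these asymptotic data, an honest $1$-parameter subgroup inside the blow-down set of $\gamma$.

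The hard part will be the last step: upgrading ``the blow-down set contains a $1$-parameter subgroup'' to ``$\gamma$ is globally length-minimizing''. Sub-Riemannian geometry offers no general mechanism for certifying global minimality, so I would try to produce a \emph{calibration} — a $1$-Lipschitz function on $G$ whose horizontal differential realizes the unit speed of $\gamma$ along the curve — built from the limiting linear functional on the abelianization that the matching condition singles out. Here the hypothesis that $(\Lie(A)\cap V_1)^\perp$ is an abelian subalgebra should be decisive, since it rigidifies the semidirect-product structure enough for such a functional to remain $1$-Lipschitz along every competitor; this mirrors the calibration used for jet spaces in \cite[Theorem B]{RM-ABD}. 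I expect constructing and verifying this calibration to absorb essentially all of the work, and its absence in full generality is exactly what keeps the equivalence at the level of a conjecture.
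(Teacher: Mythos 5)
This statement is posed in the paper as an open \emph{Conjecture}: the paper contains no proof of it, and indeed explicitly frames it as the expected-but-unproved converse refinement of Corollary~\ref{c metric_lines}. Your proposal, read carefully, is not a proof either --- it is a research outline, and you say as much --- so the right assessment is to identify where the outline has genuine gaps rather than to compare it with a nonexistent argument in the paper.

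Two gaps are substantive. First, in the forward direction, you propose to extract \emph{existence} of the limits $\lim_{t\to\pm\infty}F_{i,\mu}(x(t))$ from blow-down analysis, but Proposition~\ref{l blowdownmetricline} only provides convergence of the rescaled curves $\delta_{h_n}\circ\gamma\circ\delta_{1/h_n}$ along \emph{some} sequence $h_n\to 0$; subsequential convergence of blow-downs is compatible with persistent oscillation of $F_{i,\mu}(x(t))$, so no argument currently forces these limits to exist, and closing this gap would require new dynamical input (e.g., convergence of $x(t)$ to critical points of $V_\mu$, which is itself unproved here). Second, and more seriously, your reverse direction routes through ``the blow-down set of $\gamma$ contains a $1$-parameter subgroup, hence $\gamma$ is a metric line''; this implication is backwards. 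Containing a $1$-parameter subgroup among blow-downs is a \emph{necessary} condition for being a metric line (that is the content of Proposition~\ref{l blowdownmetricline}), not a sufficient one, so establishing it proves nothing about global minimality. The calibration idea you then invoke to repair this is indeed the mechanism used for jet spaces in \cite[Theorem B]{RM-ABD}, but constructing and verifying such a calibration in the generality of Corollary~\ref{c metric_lines} is precisely the open problem; until that is done, the equivalence remains a conjecture, exactly as the paper states it.
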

When the normal extremal is the lift of a periodic trajectory in some symplectic reduction we can prove a slighter stronger statement than the one in Corollary \ref{c metric_lines}, giving an explicit bound of the time when the normal trajectory stops being length-minimizing. The interested reader can find more details in Section~\ref{s cut}.

\subsection{Organization of the paper}
The first part of the second section of this paper is devoted to a brief presentation of symplectic manifolds and symplectic reductions. We then shortly define subRiemannian manifolds, nilpotent and Carnot groups, normal trajectories. The proof of Theorems \ref{t existence_symplectic_reduction} and \ref{t formofHmu} is contained in Section 3. The latter section also includes a presentation of the reduction in coordinates of second type and the description of the procedure to lift the flow of a reduction of the normal Hamiltonian. Corollary~\ref{c consequences} is proved in Section 4. Section 5 is devoted to the study of metric lines and the proof Corollary~\ref{c metric_lines}.  Section 6 contains an explicit bound of the cut-time for curves that project to periodic curves in some symplectic reduction.
Section 7 contains some examples and the proof of Corollary~\ref{c Eng}.

\subsection{Acknowledgements}
The authors thank Richard Montgomery for his precious help in fixing some details of the constructions made in the paper and for his openness in answering their questions. R. Montgomery was also the main source of inspiration of the ideas in Section \ref{s normal_metric_lines} and Appendix \ref{appendix_proof_Hill_region}. The authors would also like to thank Andrei Ardentov, Yuri Sachkov,
Felipe Monroy-Perez and Huang Gaofeng, 
for the interesting discussions and for sharing their insightful viewpoint on some of the topics addressed in the paper. A.B.D. was partially supported by the scholarship (CVU 619610) from `\emph{Consejo de Ciencia y Tecnologia}'
(CONACYT). E.L.D. and N.P. were partially supported by the Swiss National Science Foundation
(grant 200021-204501 `\emph{Regularity of sub-Riemannian geodesics and applications}')
and by the European Research Council  (ERC Starting Grant 713998 GeoMeG `\emph{Geometry of Metric Groups}').  
E.L.D was also partially supported by the Academy of Finland 
 (grant 322898
`\emph{Sub-Riemannian Geometry via Metric-geometry and Lie-group Theory}').

\section{Preliminaries}\label{sec preliminaries}
We present in this section the main definitions and results we will need along the paper. We briefly describe the symplectic structure of cotangent bundles and recall some results on symplectic reductions. We give the definitions of subRiemannian manifolds and Carnot groups, we present normal extremals and the normal Hamiltonian.

\subsection{Symplectic structures}
We recall here some basic facts on symplectic manifolds and symplectic reductions. We refer to \cite{Arnold} and \cite{Found-Mech} 
for an extensive presentation.

\begin{defi}
A \emph{symplectic manifold} is a smooth manifold 
with a 
closed non-degenerate differential 2-form, called the \emph{symplectic form}.
\end{defi}
On cotangent bundles, we define a canonical symplectic structure: if $G$ is a smooth manifold, we choose as the symplectic form on $T^*G$ the differential of the tautological form $s:T(T^*G)\rr \mathbb{R}$ defined by
\begin{equation*}
    s_\lambda(X):=\lambda(d_\lambda\pi X), \forall \lambda\in T^*G, \forall X\in T_\lambda T^*G,
\end{equation*}
where $\pi:T^*G\rr G$ is the canonical projection.\\
A symplectic form $\omega$ on a manifold $M$ allows us to associate to every $h\in C^{\infty}(M)$ the \emph{Hamiltonian vector field} $\Vec{h}\in \Vecc(M)$ solving
\begin{equation*}
    \omega(\Vec{h},\cdot)=\df h.
\end{equation*}
We call \emph{Hamiltonian flow} of $h\in C^{\infty}(M)$ the function $\phi_h:[0,+\infty)\times M\rr M$ solving
\begin{equation}
     \dot{\phi}_h^t(p)=\Vec{h}(\phi_h^t(p)),  \ \ \forall t\in [0,+\infty), \forall p\in M.
     \label{e def_flow}
\end{equation}
Let $I\subseteq \mathbb{R}$ be an interval. We say that a curve $\gamma:I\rr M$ \emph{solves the Hamilton equations} for $h\in C^{\infty}(M)$ if for all $t_0\in I$ we have $\gamma(t-t_0)=\phi_h^{t-t_0}(\gamma(t_0))$ for all $t\in I, t\geq t_0$.\\
When one considers two functions $f,g\in C^\infty(M)$, it is common use to describe the non-commutativity of their Hamiltonian flows in terms of their \emph{Poisson bracket}
\begin{equation}
    \{f,g\}:=\Vec{f}g.
    \label{d Poisson_bracket}
\end{equation}

The symmetries of symplectic manifolds are encoded in the action of a group via symplectomorphisms.
\begin{defi}
Let $(M,\omega)$, $(N,\omega')$ be symplectic manifolds. A diffeomorphism $\phi:M\rr N$ is a \emph{symplectomorphism} if $\phi^*\omega'=\omega$. We say that a Lie group $A$ \emph{acts via symplectomorphisms} on $M$ if there exists a smooth homomorphism between $A$ and the group of the symplectomorphisms of $M$ onto itself.
\end{defi}
The action via symplectomorphisms of a Lie group $A$ over a symplectic manifold $M$ induces an infinitesimal action 
$\sigma:\Lie(A)\to \Vecc(M)$ defined by
\begin{equation}
    \sigma(X)(p):=\frac{\df}{\df t}\exp(tX)\cdot p \big |_{t=0}, \ \ \forall p\in M.
    \label{e infinitesimal_generator}
\end{equation}

We say that the action of $A$ on $M$ is \emph{Hamiltonian} if the infinitesimal action $\sigma$ lifts to a map $J^*:\Lie(A)\rr C^\infty(M)$ that makes the following diagram commute:
\begin{equation*}
  \begin{tikzcd}
                                                          & C^{\infty}(M) \arrow[d, "\Phi"] \\
\Lie(A) \arrow[ru, "J^*", shift left] \arrow[r, "\sigma"] & \Vecc(M)           
\end{tikzcd},
\end{equation*}
where $\Phi$ is the map associating to each smooth function the corresponding Hamiltonian vector field. When the action is Hamiltonian we can define a \emph{momentum map} $J:M \rr \Lie(A)^*$, setting 
\begin{equation}
    J(p)(X):=J^*(X)(p), \forall p\in M, \forall X\in \Lie(A).
\end{equation}

Once we have a Hamiltonian action of a Lie group on a symplectic manifold we can perform a symplectic reduction.
\begin{Theorem}\emph{(Symplectic reduction, \cite[Theorem 1]{Sym-reduc})}
Let $A$ be a Lie group acting freely and properly via symplectomorphism on a symplectic manifold $(M,\omega)$. Assume the action to be Hamiltonian. Let $J:M\rr \Lie(A)^*$ be the momentum map and $\mu\in \Lie(A)^*$ be co-adjoint invariant. Then $J^{-1}(\mu)$ is an $A$-invariant smooth submanifold, and the symplectic reduction
\begin{equation*}
    \Symp{M}{A}{\mu}:= J^{-1}(\mu)/A
\end{equation*}
is a smooth manifold. Moreover, $\Symp{M}{A}{\mu}$ admits a unique symplectic form $\omega_\mu$ such that $\Pi_\mu^*\omega_\mu=i^* \omega$, with $i:J^{-1}(\mu)\hookrightarrow M$ being the canonical inclusion and $\Pi_\mu: J^{-1}(\mu)\rr\Symp{M}{A}{\mu}$ being the canonical projection.
\label{t symplecticreduction}
\end{Theorem}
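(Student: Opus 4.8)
The plan is to follow the classical Marsden--Weinstein argument in four movements: show that $\mu$ is a regular value of $J$, that $J^{-1}(\mu)$ is $A$-invariant, that the quotient is a smooth manifold, and finally build $\omega_\mu$ by a pointwise linear-algebra descent. Throughout, the bridge between $J$ and the geometry is the Hamiltonian condition: since $\sigma(X)$ is the Hamiltonian vector field of $J^*(X)$, one has $\langle \df_p J(v),X\rangle=\omega_p(\sigma(X)(p),v)$ for all $X\in\Lie(A)$ and $v\in T_pM$.

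First I would prove regularity. Because the action is free, $\sigma(X)(p)\neq 0$ whenever $X\neq 0$; combined with the nondegeneracy of $\omega$, the displayed identity shows that $v\mapsto \df_p J(v)$ has no nonzero element of $\Lie(A)$ annihilating its image, so $\df_p J$ is surjective at every $p\in J^{-1}(\mu)$. Hence $\mu$ is a regular value and $J^{-1}(\mu)$ is an embedded submanifold of codimension $\dim A$ by the regular value theorem. For $A$-invariance I would invoke the (co)equivariance of $J$, namely $J(a\cdot p)=\Ad^*_{a^{-1}}J(p)$: since $\mu$ is co-adjoint invariant, $p\in J^{-1}(\mu)$ forces $J(a\cdot p)=\mu$, so $A\cdot J^{-1}(\mu)\subseteq J^{-1}(\mu)$. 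The infinitesimal version, $\langle\df_pJ(\sigma(X)(p)),Y\rangle=\omega_p(\sigma(Y)(p),\sigma(X)(p))=\pm\langle\mu,[X,Y]\rangle$, vanishes for all $Y$ by coadjoint-invariance, so the orbit directions lie in $\ker\df_pJ$; this will be reused below. In the abelian case relevant to this paper, both the equivariance and the coadjoint-invariance are automatic, as the coadjoint action is trivial.

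Next, the restriction of the free and proper $A$-action to the invariant submanifold $J^{-1}(\mu)$ is again free and proper, so by the quotient manifold theorem $\Symp{M}{A}{\mu}=J^{-1}(\mu)/A$ is a smooth manifold and $\Pi_\mu$ is a surjective submersion exhibiting $J^{-1}(\mu)$ as a principal $A$-bundle. For the reduced form I work fiberwise. Writing $V:=\ker\df_pJ=T_pJ^{-1}(\mu)$ and $W:=T_p(A\cdot p)=\{\sigma(X)(p):X\in\Lie(A)\}$, the identity $\langle\df_pJ(v),X\rangle=\omega_p(\sigma(X)(p),v)$ gives $V=W^\omega$, whence $V^\omega=W$ by reflexivity of the symplectic orthogonal. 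In particular $W\subseteq V$ and the radical of $\omega_p|_V$ is exactly $V\cap V^\omega=W$. I would therefore set $\omega_\mu([u],[v]):=\omega_p(u,v)$ on $T_{[p]}\Symp{M}{A}{\mu}\cong V/W$; the inclusion $W=V\cap V^\omega$ makes this well defined, and $V^\omega=W$ makes it nondegenerate, so that $\Pi_\mu^*\omega_\mu=i^*\omega$ holds at the level of tangent spaces.

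The main obstacle, and the step I would treat most carefully, is checking that this fiberwise prescription descends to a single globally defined smooth form. This is precisely the statement that $i^*\omega$ is a \emph{basic} form on the principal bundle $\Pi_\mu$: it is horizontal because it annihilates the orbit directions $W$ (shown above), and it is $A$-invariant because $A$ acts by symplectomorphisms and preserves $J^{-1}(\mu)$, so $a^*(i^*\omega)=i^*\omega$. A basic form descends to a unique form on the base, which produces $\omega_\mu$; uniqueness follows from the injectivity of $\Pi_\mu^*$ for a surjective submersion. Closedness is then automatic, since $\Pi_\mu^*\df\omega_\mu=\df(i^*\omega)=i^*\df\omega=0$ and $\Pi_\mu^*$ is injective, forcing $\df\omega_\mu=0$. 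Together with the nondegeneracy established fiberwise, this shows $\omega_\mu$ is the desired symplectic form.
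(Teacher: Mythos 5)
The paper never proves this statement: it is quoted, with attribution, from Marsden--Weinstein \cite[Theorem 1]{Sym-reduc}, so your proposal can only be compared with the classical argument. Measured against that, your four steps are the standard ones and are carried out correctly: surjectivity of $\df_p J$ from freeness of the action plus nondegeneracy of $\omega$; $A$-invariance of $J^{-1}(\mu)$ from equivariance of $J$ together with coadjoint invariance of $\mu$; smoothness of the quotient from the quotient manifold theorem applied to the restricted free and proper action; and descent of $i^*\omega$ via the linear algebra $T_pJ^{-1}(\mu)=W^{\omega}$ and $\bigl(T_pJ^{-1}(\mu)\bigr)^{\omega}=W$ for $W=T_p(A\cdot p)$, combined with the observation that $i^*\omega$ is basic (horizontal and $A$-invariant). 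The uniqueness and closedness arguments through injectivity of $\Pi_\mu^*$ are also right.

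The one genuine error is your side remark that, in the abelian case, ``both the equivariance and the coadjoint-invariance are automatic, as the coadjoint action is trivial.'' Triviality of the coadjoint action does make every $\mu\in\Lie(A)^*$ coadjoint-invariant, but it does \emph{not} make the momentum map equivariant (which, for abelian $A$, would mean $A$-invariant). Counterexample: $A=\mathbb{R}^2$ acting on $(\mathbb{R}^2,\df x\wedge \df y)$ by translations is free, proper, and Hamiltonian in the sense of the paper's definition, with momentum map $J(x,y)=(y+c_1,-x+c_2)$ for constants $c_1,c_2$; no choice of constants makes $J$ invariant (the obstruction is the nonzero cocycle $\{J^*(e_1),J^*(e_2)\}=\pm 1$), each $J^{-1}(\mu)$ is a single point, and that point is not translation-invariant --- so the conclusion of the theorem fails. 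This is not a pedantic point here, because the paper's definition of Hamiltonian action (Section 2.1) does \emph{not} include equivariance: equivariance must genuinely be imposed as a hypothesis, exactly as in Marsden--Weinstein's Theorem 1. You were right to invoke it in your invariance step, and wrong to then declare it comes for free from abelianness. In the situation the paper actually uses --- the cotangent lift of left translations by an abelian subgroup, with momentum map \eqref{e defmomentumabelian} --- equivariance does hold, but the correct reason is that momentum maps of cotangent-lifted actions are always $\Ad^*$-equivariant, not that $A$ is abelian.
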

\begin{Cor}\emph{(Reduction of the dynamics, \cite[Corollary 3]{Sym-reduc})}
Let $A$ be a Lie group acting freely and properly via symplectomorphism on the symplectic manifold $(M,\omega)$. Assume the action to be Hamiltonian. Let $J:M\rr \Lie(A)^*$ be the momentum map and let $h\in C^{\infty}(M)$ be $A$-invariant. The Hamiltonian flow $\phi_h:[0,+\infty)\times M\rr M$ of $h$ commutes with the action of $A$. Moreover, $J$ is constant along the trajectories of $\phi_h$. In particular, for all co-adjoint invariant $\mu\in\Lie(A)^*$, the submanifold $J^{-1}(\mu)$ is $\phi_h$ invariant. Define $h_\mu\in C^{\infty}(\Symp{M}{A}{\mu})$ setting $h|_{J^{-1}(\mu)}=h_\mu\circ \Pi_\mu$, where $\Pi_\mu:J^{-1}(\mu)\rr \Symp{M}{A}{\mu}$ is the canonical projection.
If $\phi_{h_\mu}: [0,+\infty)\times\Symp{M}{A}{\mu}\rr \Symp{M}{A}{\mu}$ is the Hamiltonian flow of $h_\mu$, then
\begin{equation*}
    \Pi_\mu \circ \phi^t_h|_{J^{-1}(\mu)}=\phi^t_{h_\mu}\circ \Pi_\mu,\quad \forall t\in [0,+\infty).
    \label{e liftequation}
\end{equation*}

\label{c dynamicreduction}
\end{Cor}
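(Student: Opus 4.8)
The plan is to establish the four assertions of the statement in the order given, the flow identity being the only one with real content. \textbf{Commutation.} Write $\psi_a:M\rr M$ for the symplectomorphism by which $a\in A$ acts. The starting point is the general identity $(\psi_a)_*\Vec{h}=\Vec{h\circ\psi_a^{-1}}$, valid for any symplectomorphism, which follows immediately from $\psi_a^*\omega=\omega$ together with the defining relation $\omega(\Vec{h},\cdot)=\df h$. Since $h$ is $A$-invariant we have $h\circ\psi_a^{-1}=h$, so $(\psi_a)_*\Vec{h}=\Vec{h}$; that is, $\Vec{h}$ is $A$-invariant. As the flow of an invariant vector field commutes with the symmetry, $\psi_a\circ\phi_h^t=\phi_h^t\circ\psi_a$, which is the first claim.

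\textbf{Conservation of $J$.} Next I would run a Noether-type computation. Fix $X\in\Lie(A)$; by definition of the momentum map, $J^*(X)$ is the Hamiltonian function of the infinitesimal generator, i.e. $\Vec{J^*(X)}=\sigma(X)$. Differentiating along the flow and using the antisymmetry of the Poisson bracket, $\frac{\df}{\df t}\big(J^*(X)\circ\phi_h^t\big)=\{h,J^*(X)\}=-\Vec{J^*(X)}h=-\sigma(X)h$. But $\sigma(X)h(p)=\frac{\df}{\df t}h(\exp(tX)\cdot p)\big|_{t=0}=0$, precisely because $h$ is $A$-invariant. Hence $J^*(X)\circ\phi_h^t$ is constant for every $X$, so $J$ is constant along $\phi_h$; in particular each $J^{-1}(\mu)$ is $\phi_h$-invariant and, consequently, $\Vec{h}$ is tangent to $J^{-1}(\mu)$.

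\textbf{Descent and identification with $\phi_{h_\mu}$.} Combining the two steps, $\Vec{h}$ restricts to an $A$-invariant vector field tangent to $J^{-1}(\mu)$, so it descends through the principal bundle projection $\Pi_\mu$ to a well-defined field $Y$ on $\Symp{M}{A}{\mu}$ determined by $\df\Pi_\mu\circ\Vec{h}=Y\circ\Pi_\mu$; by construction its flow satisfies $\Pi_\mu\circ\phi_h^t|_{J^{-1}(\mu)}=\phi_Y^t\circ\Pi_\mu$. It remains only to show $Y=\Vec{h_\mu}$, i.e. $\omega_\mu(Y,\cdot)=\df h_\mu$, which I would check by pulling back along the submersion $\Pi_\mu$. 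For $p\in J^{-1}(\mu)$ and $W\in T_pJ^{-1}(\mu)$, the characterizing relation $\Pi_\mu^*\omega_\mu=i^*\omega$ of Theorem~\ref{t symplecticreduction} gives $\omega_\mu(Y,\df\Pi_\mu W)=\omega(\Vec{h},W)=\df h(W)$, while $h|_{J^{-1}(\mu)}=h_\mu\circ\Pi_\mu$ gives $\df h(W)=\df h_\mu(\df\Pi_\mu W)$. Since $\df\Pi_\mu$ is surjective, this forces $\omega_\mu(Y,\cdot)=\df h_\mu$, hence $Y=\Vec{h_\mu}$ and $\phi_Y^t=\phi_{h_\mu}^t$, completing the identity.

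The only genuinely delicate point, and the one I would write out most carefully, is this final identification. Every two-form evaluation there must be carried out with vectors tangent to $J^{-1}(\mu)$, so that $i^*\omega$ may legitimately replace $\omega$; and the passage from $\Vec{h}$ to a single-valued $Y$ on the quotient relies on both of its properties established above: $A$-invariance (from the commutation step) for projectability, and tangency to the level set (from conservation) so that the restriction to $J^{-1}(\mu)$ makes sense. The commutation and conservation assertions, by contrast, are routine once the relation $\Vec{J^*(X)}=\sigma(X)$ is in hand.
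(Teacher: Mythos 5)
Your proof is correct, and there is nothing to compare it against inside the paper itself: the paper states this corollary as a quoted result of Marsden--Weinstein (\cite[Corollary 3]{Sym-reduc}) and gives no proof. Your argument --- $A$-invariance of $\Vec{h}$ via $(\psi_a)_*\Vec{h}=\Vec{h\circ\psi_a^{-1}}$, the Noether computation $\{h,J^*(X)\}=-\sigma(X)h=0$ using $\Vec{J^*(X)}=\sigma(X)$, and the descent of $\Vec{h}$ to the quotient identified with $\Vec{h_\mu}$ through $\Pi_\mu^*\omega_\mu=i^*\omega$ and the surjectivity of $\df\Pi_\mu$ --- is exactly the classical proof of the cited result, carried out consistently with the paper's sign conventions $\omega(\Vec{h},\cdot)=\df h$ and $\{f,g\}=\Vec{f}g$, and with the delicate point (that all evaluations in the final identification take place on vectors tangent to $J^{-1}(\mu)$, using both projectability and tangency) correctly flagged and handled.
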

Consider a free and proper Hamiltonian action of an abelian 
Lie group $A$ on a symplectic manifold $(M,\omega)$. Fix an $A$-invariant function $h\in C^{\infty}(M)$. As a consequence of Corollary \ref{c dynamicreduction}, we have that every trajectory of the Hamiltonian flow of $h$ lies in a submanifold $J^{-1}(\mu)$, for some $\mu\in \Lie(A)^*$, and is the lift of a trajectory of the flow of $h_\mu$ in $\Symp{M}{A}{\mu}$. Consequently, if we want to study the dynamic of the Hamiltonian flow of $h$, it is sufficient to study the dynamic of the Hamiltonian flows of $h_\mu$ in $\Symp{M}{A}{\mu}$ for every $\mu\in \Lie(A)^*$.\\
When we deal with the action of a Lie group over a cotangent bundle we are able to describe the symplectic structure in the reduction using connection $1$-forms.
\begin{defi}
    Let $A$ be a Lie group acting freely and properly on an $A$-principal bundle $G$. A \emph{connection $1$-form} $\alpha\in \Omega^1(G,\Lie(A))$ is a $\Lie(A)$-valued $1$-form such that
    \begin{eqnarray}
        \alpha\circ \sigma =\id_{\Lie(A)}, \text{ where } \sigma \text{ is defined in \eqref{e infinitesimal_generator}}, \label{e condition1-1-form}\\
        \alpha(a_*v)=\Ad_a \alpha(v) \label{e condition2-1-form},
    \end{eqnarray}
    where in the left hand side of \eqref{e condition2-1-form} we interpret $a\in A$ as a function $a:G\rr G$ using the action of $A$ over $G$.
    \label{d connection-1-form}
\end{defi}
Defining a connection $1$-form allows us to explicitly describe the symplectic structure of cotangent bundle reductions. We present here a particular case of 
\cite[Theorem 6.6.3]{ortega-ratiu-momentum-maps}.
\begin{Theorem}
Let $A$ be an abelian Lie group acting freely and properly (on the left) on a manifold $G$. This action induces a Hamiltonian action on $T^*G$ with the canonical symplectic form. Fix a connection $1$-form $\alpha\in\Omega^1(G,\Lie(A))$.
Then, for all $\mu\in\Lie(A)^*$, there exists a symplectomorphism $\varphi_\mu: \Symp{T^*G}{A}{\mu}\rr T^*(G/A)$, the latter space with the symplectic form $\omega_{\text{can}}-B_\mu$, where $\omega_{\text{can}}$ is the canonical symplectic form and $B_\mu$ is defined from $B_\mu:=\Pi^*\Tilde{B}_\mu$, $\pi^*\Tilde{B}_\mu:=\df \langle\mu,\alpha\rangle$, the maps $\Pi:T^*(G/A)\rr G/A$ and $\pi:G\rr G/A$ being the canonical projections. Moreover, the map $\varphi_\mu$ is the composition of two maps: the map $\Bar{s}_\mu:\Symp{T^*G}{A}{\mu}\rr\Symp{T^*G}{A}{0}$ induced by the fiber translation $s_\mu:J^{-1}(\mu)\rr J^{-1}(0)$,
\begin{equation}
    s_\mu(\lambda):=\lambda-\langle\mu,\alpha\rangle,\quad \forall \lambda\in J^{-1}(\mu),
    \label{e defshift}
\end{equation}
and the map $\Bar{\psi}_0:\Symp{T^*G}{A}{0}\rr T^*(G/A)$, induced by the map $\psi_0: J^{-1}(0)\rr T^*(G/A)$, defined from
\begin{equation}
    (\psi_0(\lambda))(\pi_*X):=\lambda(X), \quad \forall \lambda\in J^{-1}(0), \quad \forall X\in\Vecc(G).
    \label{e defpsi0}
\end{equation}
\label{t reduction_with_connection_1-form}
\end{Theorem}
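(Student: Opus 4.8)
The plan is to reduce the computation at a general level $\mu$ to the computation at the zero level by means of the fiber translation \eqref{e defshift}, and to handle the zero level directly through the tautological identification \eqref{e defpsi0}. Since $A$ is abelian, its coadjoint action is trivial, so every $\mu\in\Lie(A)^*$ is coadjoint invariant and Theorem \ref{t symplecticreduction} applies at each level, producing reduced forms $\omega_\mu$ and $\omega_0$ uniquely characterized by $\Pi_\mu^*\omega_\mu=i^*\omega$ and $\Pi_0^*\omega_0=i^*\omega$. For the cotangent-lifted action the momentum map is the standard one, $J(\lambda)(X)=\lambda(\sigma(X))$ with $\sigma$ the infinitesimal generator on $G$ as in \eqref{e infinitesimal_generator}; I will use this formula throughout. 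Let $\tau:T^*G\rr G$ denote the cotangent projection (reserving $\pi:G\rr G/A$ and $\Pi:T^*(G/A)\rr G/A$ for the maps in the statement).

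For the zero level I would first check that $\psi_0$ in \eqref{e defpsi0} is well defined. If $\lambda\in J^{-1}(0)$ then $\lambda$ annihilates every vertical vector $\sigma(X)=\ker\df\pi$, so $\lambda$ factors through $\df\pi$ and $\psi_0(\lambda)$ is a well-defined covector on $G/A$; $A$-invariance of $J^{-1}(0)$ and of the construction then makes $\psi_0$ descend to a diffeomorphism $\Bar\psi_0:\Symp{T^*G}{A}{0}\rr T^*(G/A)$. To see it is symplectic I would compare tautological $1$-forms: writing $s_G$ and $s_{G/A}$ for the tautological forms, the defining relation \eqref{e defpsi0} gives $\psi_0^* s_{G/A}=i^* s_G$ on $J^{-1}(0)$, hence $\psi_0^*\omega_{\text{can}}=i^*\omega$; by the uniqueness clause of Theorem \ref{t symplecticreduction} this forces $\Bar\psi_0^*\omega_{\text{can}}=\omega_0$.

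For a general level, consider the fiber translation $t_\mu(\lambda):=\lambda-\langle\mu,\alpha\rangle_{\tau(\lambda)}$ on $T^*G$, which satisfies the standard identity $t_\mu^* s_G=s_G-\tau^*\langle\mu,\alpha\rangle$, whence $t_\mu^*\omega=\omega-\tau^*\df\langle\mu,\alpha\rangle$. Using \eqref{e condition1-1-form} one checks that $t_\mu$ sends $J^{-1}(\mu)$ into $J^{-1}(0)$, since $J(t_\mu(\lambda))(X)=\lambda(\sigma(X))-\langle\mu,\alpha(\sigma(X))\rangle=\mu(X)-\mu(X)=0$. Using \eqref{e condition2-1-form} together with $\Ad_a=\id$ (abelianness) one sees that $\langle\mu,\alpha\rangle$ is $A$-invariant, so $t_\mu$ is $A$-equivariant and its restriction $s_\mu$ descends to $\Bar s_\mu:\Symp{T^*G}{A}{\mu}\rr\Symp{T^*G}{A}{0}$. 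Setting $\varphi_\mu:=\Bar\psi_0\circ\Bar s_\mu$ gives the desired diffeomorphism, and it remains only to identify the symplectic form it carries.

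This last point is where I expect the real work to lie. Tracing the forms through the quotient, $\Pi_\mu^*(\Bar s_\mu^*\omega_0)=i^*(t_\mu^*\omega)=i^*\omega-i^*\tau^*\df\langle\mu,\alpha\rangle=\Pi_\mu^*\omega_\mu-i^*\tau^*\df\langle\mu,\alpha\rangle$, so I must show that the correction $\df\langle\mu,\alpha\rangle$ descends, first to a $2$-form on $G/A$ and then to the magnetic term on $T^*(G/A)$. The key lemma is that $\df\langle\mu,\alpha\rangle$ is \emph{basic}: it is $A$-invariant because $\langle\mu,\alpha\rangle$ is, and it is horizontal because, by Cartan's formula and \eqref{e condition1-1-form}, $\iota_{\sigma(X)}\df\langle\mu,\alpha\rangle=\mathcal{L}_{\sigma(X)}\langle\mu,\alpha\rangle-\df\langle\mu,\alpha(\sigma(X))\rangle=0-\df\langle\mu,X\rangle=0$, the last term vanishing since $\langle\mu,X\rangle$ is constant. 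Hence $\df\langle\mu,\alpha\rangle=\pi^*\Tilde B_\mu$ for a unique $\Tilde B_\mu\in\Omega^2(G/A)$, whose pullback to $T^*(G/A)$ is precisely $B_\mu=\Pi^*\Tilde B_\mu$. Assembling these identities and invoking once more the uniqueness of the reduced form in Theorem \ref{t symplecticreduction}, I would conclude $\varphi_\mu^*(\omega_{\text{can}}-B_\mu)=\omega_\mu$, which is the assertion. The delicate bookkeeping is entirely in this last paragraph: keeping straight which pullbacks live on $J^{-1}(\mu)$, on the reduced spaces, and on $T^*(G/A)$, and confirming that the sign and descent of $d\langle\mu,\alpha\rangle$ match the magnetic form stated.
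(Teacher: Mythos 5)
Your overall architecture is the standard proof of cotangent-bundle reduction and, notably, it is a proof where the paper has none: Theorem \ref{t reduction_with_connection_1-form} is only quoted as a special case of Ortega--Ratiu, so the comparison here is between your argument and the cited source. Your intermediate steps are all correct: $\psi_0$ is well defined on $J^{-1}(0)$ and satisfies $\psi_0^*s_{G/A}=i^*s_G$, hence $\Bar{\psi}_0$ is symplectic onto $(T^*(G/A),\omega_{\text{can}})$ by uniqueness in Theorem \ref{t symplecticreduction}; the fiber translation maps $J^{-1}(\mu)$ into $J^{-1}(0)$ by \eqref{e condition1-1-form}, is $A$-equivariant by \eqref{e condition2-1-form} and abelianness, and satisfies $t_\mu^*\omega=\omega-\tau^*\df\langle\mu,\alpha\rangle$; and $\df\langle\mu,\alpha\rangle$ is basic by your Cartan-formula computation.

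The genuine gap is in the final assembly, exactly the step you defer with ``I would conclude'': under the paper's own convention, in which the symplectic form of a cotangent bundle is $\omega=\df s$ (the differential of the tautological form, \emph{not} its negative), your identities do not yield the stated sign. Chasing them through the quotient gives
\begin{equation*}
\Pi_\mu^*\bigl(\Bar{s}_\mu^*\omega_0\bigr)=i^*\bigl(t_\mu^*\omega\bigr)=\Pi_\mu^*\omega_\mu-i^*\tau^*\df\langle\mu,\alpha\rangle=\Pi_\mu^*\omega_\mu-\Pi_\mu^*\varphi_\mu^*B_\mu,
\end{equation*}
and since $\Bar{s}_\mu^*\omega_0=\Bar{s}_\mu^*\Bar{\psi}_0^*\omega_{\text{can}}=\varphi_\mu^*\omega_{\text{can}}$, this reads $\varphi_\mu^*\left(\omega_{\text{can}}+B_\mu\right)=\omega_\mu$, with a plus sign. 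A concrete check: take $G=\mathbb{R}^3$ with coordinates $(x_1,x_2,\theta)$, $A=\mathbb{R}$ acting on $\theta$, and $\alpha=\df\theta+f(x_1,x_2)\df x_1$; then $\varphi_\mu$ sends $(x,p)$ to $(x,p_1-\mu f(x),p_2)$, so $\varphi_\mu^*\omega_{\text{can}}=\omega_\mu+\mu \partial_{x_2}f\,\df x_1\wedge \df x_2$, while $B_\mu=-\mu\partial_{x_2}f\,\df x_1\wedge\df x_2$; hence $\varphi_\mu^*(\omega_{\text{can}}+B_\mu)=\omega_\mu$ but $\varphi_\mu^*(\omega_{\text{can}}-B_\mu)\neq\omega_\mu$ whenever $\partial_{x_2}f\neq 0$. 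The minus sign in the statement is inherited from Ortega--Ratiu, who use the opposite convention $\omega_{\text{can}}=-\df s$ (under which $t_\mu^*\omega=\omega+\tau^*\df\langle\mu,\alpha\rangle$ and the minus sign does come out). So as written your last line contradicts your own computations: you must either adopt the convention $\omega_{\text{can}}=-\df s$ consistently throughout, or keep the paper's convention and conclude with $+B_\mu$, flagging the sign in the statement as convention-dependent. The discrepancy is invisible in the rest of the paper, which only invokes the theorem when $B_\mu=0$ via Lemma \ref{l 0curvature}, but a proof cannot leave it unresolved.
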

A consequence of the definition of $B_\mu$ in Theorem \ref{t reduction_with_connection_1-form} is the following:
\begin{lemma}
In the setting of Theorem \ref{t reduction_with_connection_1-form}, if $\ker(\alpha)$ is an integrable distribution then $B_\mu=0$ and $\varphi_\mu$ is a symplectomorphism between $\Symp{T^*G}{A}{\mu}$ and $\left(T^*(G/A),\omega_\text{can}\right)$. 
\label{l 0curvature}
\end{lemma}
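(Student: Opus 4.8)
The plan is to reduce the entire statement to the single identity $\df\langle\mu,\alpha\rangle=0$ on $G$, and then to obtain this identity directly from the integrability hypothesis via the Frobenius theorem. Recall from Theorem \ref{t reduction_with_connection_1-form} that the target symplectic form is $\omega_{\text{can}}-B_\mu$ with $B_\mu=\Pi^*\Tilde{B}_\mu$, where $\Tilde{B}_\mu$ is the form on $G/A$ characterized by $\pi^*\Tilde{B}_\mu=\df\langle\mu,\alpha\rangle$. Since $\pi:G\rr G/A$ and $\Pi:T^*(G/A)\rr G/A$ are surjective submersions, the pullbacks $\pi^*$ and $\Pi^*$ are injective on differential forms; hence $B_\mu=0$ is equivalent to $\Tilde{B}_\mu=0$, which in turn is equivalent to $\df\langle\mu,\alpha\rangle=0$. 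Once this vanishing is established, the target form is exactly $\omega_{\text{can}}$ and $\varphi_\mu$ becomes a symplectomorphism onto $(T^*(G/A),\omega_{\text{can}})$, which is the assertion.

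Next I would record that $\df\langle\mu,\alpha\rangle$ is a \emph{basic} $2$-form, i.e. it is $A$-invariant and horizontal. Invariance follows from the $A$-invariance of $\alpha$, itself a consequence of \eqref{e condition2-1-form} together with $\Ad_a=\id_{\Lie(A)}$ since $A$ is abelian. Horizontality follows from Cartan's formula: for $X\in\Lie(A)$ one has $\iota_{\sigma(X)}\df\langle\mu,\alpha\rangle=\mathcal{L}_{\sigma(X)}\langle\mu,\alpha\rangle-\df\,\iota_{\sigma(X)}\langle\mu,\alpha\rangle$, and both terms vanish, the first by $A$-invariance of $\langle\mu,\alpha\rangle$, the second because $\iota_{\sigma(X)}\langle\mu,\alpha\rangle=\langle\mu,X\rangle$ is constant by \eqref{e condition1-1-form}. (This is precisely the fact that already makes $\Tilde{B}_\mu$ well defined, so it may simply be cited from the proof of Theorem \ref{t reduction_with_connection_1-form}.) The payoff of basicness is that such a $2$-form vanishes identically as soon as it vanishes on every pair of horizontal vectors: any tangent vector decomposes as a horizontal plus a vertical part, and the vertical contributions are annihilated by horizontality.

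It then remains to evaluate $\df\langle\mu,\alpha\rangle$ on horizontal vector fields $X,Y$, that is, local sections of $\ker(\alpha)$. Using the intrinsic formula for the exterior derivative together with $\alpha(X)=\alpha(Y)=0$, we obtain
\[
\df\langle\mu,\alpha\rangle(X,Y)=\langle\mu,\df\alpha(X,Y)\rangle=-\langle\mu,\alpha([X,Y])\rangle .
\]
Here the hypothesis enters: by the Frobenius theorem the integrability of $\ker(\alpha)$ forces $[X,Y]$ to be again a section of $\ker(\alpha)$, so $\alpha([X,Y])=0$ and the right-hand side vanishes. Hence $\df\langle\mu,\alpha\rangle$ vanishes on all horizontal pairs, and being basic it vanishes identically, giving $\Tilde{B}_\mu=0$ and $B_\mu=0$, as required. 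Conceptually the content is just the identification of $\df\alpha$ with the curvature of the abelian connection $\alpha$, and the classical equivalence between flatness of the horizontal distribution and its integrability; the only point deserving care is the licence to test the basic form $\df\langle\mu,\alpha\rangle$ on horizontal vectors alone, which is exactly why isolating basicness beforehand is the cleanest route.
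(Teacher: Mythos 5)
Your proposal is correct, and its core is the same as the paper's: both arguments come down to evaluating $\df\langle\mu,\alpha\rangle$ on a pair of sections of $\ker(\alpha)$ via the intrinsic formula for the exterior derivative, and then invoking Frobenius to conclude that the bracket of two horizontal fields is again horizontal, killing the only surviving term. The difference lies in how each justifies that testing on horizontal vectors suffices. The paper delegates this to the literature: it introduces the projection $\hor(Y):=Y-\sigma(\alpha(Y))$ and cites \cite[Definition 2.1.7, Theorem 2.1.9, Lemma 2.1.10]{Hamiltonian_reduction_by_stages_marsden_and_others} for the statement that $B_\mu=0$ once $\df\langle\mu,\alpha\rangle(\hor(X),\hor(Y))=0$ for all $X,Y\in\Vecc(G)$. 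You instead make the reduction self-contained: you note that $B_\mu=\Pi^*\Tilde B_\mu$ and $\pi^*\Tilde B_\mu=\df\langle\mu,\alpha\rangle$ with $\pi^*$, $\Pi^*$ injective (surjective submersions), so $B_\mu=0$ is equivalent to $\df\langle\mu,\alpha\rangle=0$ on $G$; then you prove via Cartan's formula and conditions \eqref{e condition1-1-form}--\eqref{e condition2-1-form} that $\df\langle\mu,\alpha\rangle$ is basic, so its vanishing on horizontal pairs forces it to vanish identically. This buys a proof with no external dependence on the curvature formalism, at the cost of a few extra (routine) steps; the paper's route is shorter on the page but leans on the cited machinery. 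A minor side benefit of your write-up: your displayed intrinsic-derivative computation is stated in its correct form, $X\langle\mu,\alpha\rangle(Y)-Y\langle\mu,\alpha\rangle(X)-\langle\mu,\alpha\rangle([X,Y])$, whereas the corresponding display in the paper repeats the first term twice with opposite signs (a harmless typo, since all terms vanish anyway).
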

\begin{proof}
Fix a connection $1$-form $\alpha\in \Omega^1(G,\Lie(A))$ with an integrable distribution has a kernel. 
Define the map $\hor:TG\rr \ker(\alpha)$ setting
\begin{eqnarray*}
    \hor(Y):=Y-\sigma(\alpha(Y)), \quad \forall Y\in TG.
\end{eqnarray*}
where $\sigma$ is defined in \eqref{e infinitesimal_generator}.
To show that $B_\mu$ is $0$ we have to show
\begin{equation*}
    \df \langle\mu,\alpha\rangle(\hor(X),\hor(Y))=0, \quad\forall X,Y\in \Vecc(G),
\end{equation*}
see for example \cite[Definition 2.1.7]{Hamiltonian_reduction_by_stages_marsden_and_others} or \cite[Theorem 2.1.9, Lemma 2.1.10]{Hamiltonian_reduction_by_stages_marsden_and_others}.
Fix $X,Y\in \Vecc(G)$. We have 
\begin{eqnarray*}
    \df \langle\mu,\alpha\rangle(\hor(X),\hor(Y))=\hor(X) \langle\mu,\alpha\rangle(\hor(Y))-\hor(X) \langle\mu,\alpha\rangle(\hor(Y))+\\
    \langle\mu,\alpha\rangle([\hor(X),\hor(Y)])=0,
\end{eqnarray*}
where in the last equality we used that $\hor(X),\hor(Y)\in \ker(\alpha)$ by definition of the map $\hor$ and $[\hor(X),\hor(Y)]\in \ker(\alpha)$ being $\ker(\alpha)$ an integrable distribution.
\end{proof}

\subsubsection{Abelian subgroup acting on a Lie group}\label{sec: abeliansubgroupactingonaLiegroup}
In this paper, we will be interested in the case of an abelian subgroup of a subRiemannian Lie group acting via left-translation on the cotangent bundle of the Lie group. 
Let $G$ be a Lie group and $A<G$ be an abelian subgroup. The left-action of $A$ on $G$ induces a symplectic action of $A$ on $T^*G$ given by
\begin{equation*}
    a\cdot \lambda:=(L_{a^{-1}})^*\lambda, \quad\forall a\in A, \forall \lambda\in T^*G.
\end{equation*}
This action is Hamiltonian and the momentum function $J:G\rr \Lie(A)^*$ is given by
\begin{equation}
    J(\lambda)(X):=\lambda(X^{\dagger}), \quad\forall \lambda\in T^*G,\forall X\in \Lie(A),
    \label{e defmomentumabelian}
    \end{equation}
where $X^{\dagger}$ is the right-invariant extension of $X$ (we identify $\Lie(A)$ with $T_1A$). Being $A$ abelian, every element $\mu\in \Lie(A)^*$ is co-adjoint invariant and consequently the symplectic reduction $\Symp{T^*G}{A}{\mu}$ is a smooth manifold for every $\mu\in \Lie(A)^*$. Moreover, the group $G$ has the structure of an $A$-principal bundle. The conditions \eqref{e condition1-1-form} and \eqref{e condition2-1-form} defining connection $1$-forms rewrite as
\begin{eqnarray}
    \alpha_g(\df R_g X)&=&X, \quad \forall g\in G,\forall X\in\Lie(A)\simeq T_1A, \label{e defconnection1form-groups}\\
    L_a^*\alpha&=&\alpha, \quad\forall a\in A.
\end{eqnarray}



\subsection{SubRiemannian manifolds}
We present some definitions and facts in subRiemannian geometry that we need in the paper.
\begin{defi}
  A \emph{distribution} on a smooth manifold is a subbundle of the tangent bundle. A distribution is \emph{bracket-generating} if the Lie algebra generated by sections of the distribution contains a frame of the tangent bundle in a neighborhood of every point. A \emph{subRiemannian manifold} is a smooth manifold with a bracket-generating distribution and a Riemannian metric. 
On a SubRiemannian manifold $M$ with distribution $\Delta$ and Riemannian metric $\rho$ we consider the \emph{Carnot-Carathéodory distace}  \begin{equation*}
 \begin{split}
    d_{cc}(x,y):=\inf \bigg\{ \int_0^1\sqrt{\rho(\Dot{\gamma}(t),\Dot{\gamma}(t))}\df t \ \bigg|  \ \gamma:[0,1]\rr M \text{ absolutely continuous },\\ \gamma(0)=x, \gamma(1)=y, \Dot{\gamma}(t)\in \Delta_{\gamma(t)} \text{ for a.e. } t\in[0,1]  \bigg\} .
 \end{split}
\end{equation*}
\end{defi}
By Chow's Theorem \cite[Section 0.4]{gromov1996carnot}, we have that the Carnot-Carathéodory distance is always finite.
Throughout the paper, we will only consider Lie groups equipped with left-invariant bracket-generating distributions and left-invariant Riemannian metrics.
It is known that if a curve in a subRiemannian manifold is length-minimizing, it is either a normal or an abnormal extremal (see for example \cite[Section 4]{agrachev} for a comprehensive study of length-minimizing curves in subRiemannian manifolds). This paper will focus only on the dynamics of normal trajectories in Lie groups. 
\begin{defi}
Let $G$ be a Lie group with left-invariant bracket-generating distribution $\Delta$ and left-invariant Riemannian metric $\rho$. An absolutely continuous curve is a \emph{normal trajetory} if it admits a lift $\lambda:[0,1]\rr T^*G$, called \emph{normal extremal}, solving the Hamilton equations for $H\in C^{\infty}(T^*G)$, defined by
\begin{equation}
    H(\eta):=\max_{X\in \Delta_{\pi(\eta)}}\left(\langle \eta, X \rangle-\frac{1}{2}\rho(X,X)\right)=\sum_1^d\frac{1}{2}|\langle \eta, X_i \rangle|^2, \ \ \forall \eta\in T^*G,
    \label{e defnormalhamiltonian}
\end{equation}
where $\pi:T^*G\rr G$ is the canonical projection, and $X_1,...,X_d$ is any orthonormal frame of the distribution. We call $H$ the \emph{normal Hamiltonian}. 
\end{defi}
Normal extremals are known to be smooth and locally length-minimizing \cite[Corollary~17.4]{agrachev}. 
Along the paper, we will be interested in the study of the integrability of the normal Hamiltonian flow and we will try to bound the time when normal trajectories stop being length-minimizing. We recall here the definitions of integrability and cut-time.
\begin{defi}
  Let $(M,\omega)$ be a $2n$-dimensional symplectic manifold and let $H\in C^\infty(M)$. We say that the  Hamiltonian flow of $H$ is 
  \emph{Arnold-Liouville integrable} if for all $p\in M$ there exist smooth functions $F_1,...,F_n:M\to \mathbb{R}^n$ satisfying: 
\begin{enumerate}
\item \emph{(prime integral)} there holds $\{F_i,H\} = 0$ for all $i\in\{1,...,n\}$.
\item \emph{(independence)} The rank of the Jacobian matrix of $F$ is $n$ in a neighborhood of $p$.
\item \emph{(involution)} We have $\{F_i,F_j\} = 0$ for all $i,j\in\{1,...,n\}$. 
\end{enumerate}
\end{defi}

\begin{defi}
Let $M$ be a subRiemannian manifold. The \emph{cut-time} of a geodesic $\gamma:I\rr M$ at a time $t_0\in I$ is 
\begin{equation*}
    t_{\text{cut}}(\gamma,t_0):=\sup\{t\in I \ : \ t>t_0, \ \gamma|_{[t_0,t]} \text{ is length-minimizing} \}.
\end{equation*}
\end{defi}
When the cut-time of a normal trajectory is infinite, then (up to re-parametrization) we call the length-minimizing curve a metric line.
\begin{defi}
    Let $M$ be a subRiemannian manifold. A \emph{metric line} is a curve $\gamma: \mathbb{R}\rr M$ such that
    \begin{equation*}
        d(\gamma(a),\gamma(b))=|a-b|, \forall a,b\in\mathbb{R}.
    \end{equation*}
    \label{d metric_lines}
\end{defi}

\subsection{Nilpotent groups}
In this paper we will consider simply connected metabelian nilpotent groups, equipped with left-invariant distributions and left-invariant Riemannian metrics.
\begin{defi}Let $G$ be a group. Given a subgroup $H<G$ we denote with $[H,G]$ the subgroup generated by the set $\{hgh^{-1}g^{-1} \ : \ h\in H,g\in G\}$.
For $i\in\mathbb{N}$ define $G_i<G$ setting
\begin{equation*}
    \begin{cases}
        G_1:=G;\\
        G_i:=[G_{i-1},G] \quad \text{ if } i>1;
    \end{cases}.
\end{equation*}
A group $G$ is \emph{nilpotent} if there exists $n\in\mathbb{N}$ such that $G_n=\{1_G\}$. \\
A group $G$ is \emph{metabelian} if $[G,G]$ is abelian.
\end{defi}
The interested reader can find more details on nilpotent groups in \cite{Greanleaf-Corwin}. We refer to \cite{Robinson:1995} for an extensive presentation of metabelian groups.\\
In simply connected nilpotent Lie groups there is a convenient way to choose a base of the Lie algebra in order to make exponential coordinates of second type a diffeomorphism.
\begin{defi}
    Let $G$ be nilpotent a Lie group. An ordered basis $X_1,...,X_n$ of $\Lie(G)$ is a \emph{(weak) Malcev basis} if for all $m\in\{1,...,n\}$ the vector space $\Span\{X_1,...,X_m\}$ is a sub-algebra of $\Lie(G)$
\end{defi}
It is well known that in nilpotent groups Malcev basis exist \cite[Theorem 1.1.13]{Greanleaf-Corwin}. In metabelian nilpotent groups the construction of a Malcev basis is very simple.
\begin{lemma}
    Let $G$ be a simply connected nilpotent Lie group. 
    Let $A\triangleleft G$ be an abelian subgroup with $[G,G]<A$. Let $\{Y_1,...,Y_{m},X_1,...,X_n\}\in\Lie(G)$ be a base of $\Lie(G)$ with $Y_1,...,Y_{m}$ base of $\Lie(A)$.
    Then the ordered set of vectors $Y_1,...,Y_{m},X_n,...,X_1$ is a Malcev base of $\Lie(G)$.
 \label{l Malcev_basis}
\end{lemma}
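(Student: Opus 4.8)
The plan is to reduce the whole statement to two algebraic facts about $\Lie(G)$ and then observe that the Malcev condition becomes immediate. First I would pass from the group hypotheses to the Lie algebra. Since $G$ is simply connected and nilpotent, $\exp:\Lie(G)\to G$ is a diffeomorphism and connected subgroups correspond bijectively to subalgebras; under this correspondence the derived subgroup $[G,G]$ has Lie algebra $[\Lie(G),\Lie(G)]$, the subgroup $A$ has Lie algebra $\Lie(A)$, and containments/commutativity are preserved. Thus the hypothesis that $A$ is abelian gives that $\Lie(A)$ is an \emph{abelian} subalgebra, i.e. $[\Lie(A),\Lie(A)]=0$, while $[G,G]\subseteq A$ gives
\[
[\Lie(G),\Lie(G)]\subseteq \Lie(A).
\]
These are the only two structural facts I will use. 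Note in particular that normality of $A$ need not be invoked separately: from $[\Lie(G),\Lie(G)]\subseteq\Lie(A)$ one already gets $[\Lie(G),\Lie(A)]\subseteq[\Lie(G),\Lie(G)]\subseteq\Lie(A)$, so $\Lie(A)$ is automatically an ideal.

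The key observation I would isolate is the following: \emph{any} linear subspace $S$ with $\Lie(A)\subseteq S\subseteq \Lie(G)$ is a subalgebra. Indeed, for such $S$ we have
\[
[S,S]\subseteq [\Lie(G),\Lie(G)]\subseteq \Lie(A)\subseteq S.
\]
Symmetrically, any linear subspace $S\subseteq\Lie(A)$ is a subalgebra, because $[S,S]\subseteq[\Lie(A),\Lie(A)]=0\subseteq S$. With these two remarks, the proof is just a matter of checking that every prefix of the proposed ordered basis is of one of these two types.

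Concretely, write the ordered list as $Y_1,\dots,Y_m,X_n,\dots,X_1$ and consider its prefixes. A prefix of length $k\le m$ equals $\Span\{Y_1,\dots,Y_k\}$, which is a subspace of the abelian algebra $\Lie(A)$ and hence a subalgebra by the second remark. A prefix of length $m+j$ with $1\le j\le n$ equals $\Span\{Y_1,\dots,Y_m,X_n,\dots,X_{n-j+1}\}$, which contains $\Span\{Y_1,\dots,Y_m\}=\Lie(A)$ and is therefore a subalgebra by the first remark. Since every prefix is a subalgebra, the ordered set is a (weak) Malcev basis, proving the lemma. I do not expect a genuine obstacle here: the only point requiring a moment of care is the translation between the group-level hypotheses and their Lie-algebra counterparts via the exponential diffeomorphism; once that is in place the verification is purely formal. (It is worth remarking, though I would not belabor it, that the argument shows the particular reversed order $X_n,\dots,X_1$ plays no role in the Malcev property itself — any ordering of the $X_i$ after the $Y_j$ would do — so the stated order is chosen only for compatibility with the exponential coordinates of second type used later.)
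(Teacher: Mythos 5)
Your proof is correct and follows essentially the same route as the paper: short prefixes of the ordered basis are subalgebras because $\Lie(A)$ is abelian, and the longer prefixes are subalgebras because they contain $\Lie(A)\supseteq[\Lie(G),\Lie(G)]$, so they absorb all brackets. The extra observations you make (normality being automatic, the ordering of the $X_i$ being irrelevant for the Malcev property) are accurate but not needed; the paper's proof is the same two-case check.
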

\begin{proof}
Being $A$ abelian, for all $j<m$ we have that $\Span\{Y_1,...,Y_l\}$ is an abelian sub-algebra. Moreover, for all $i\in\{1,...,n\}$ the vector subspace $\Span\{Y_1,...,Y_m,X_n,...,X_i\}$ is a sub-algebra since $[G,G]\subseteq \Lie(A)\subseteq\Span\{Y_1,...,Y_m,X_n,...,X_i\}$.
\end{proof}
Being more careful in the choice of the base of $\Lie(A)$ in Lemma \ref{l Malcev_basis}, one could find an ordered base for which the span of the first vectors is an ideal of $\Lie(G)$. However, since this is not necessary for our purposes, we prefer to stick with the weaker notion of Malcev basis and avoid to add unnecessary technical details in the proof of Lemma \ref{l Malcev_basis}.

\subsubsection{Carnot groups}
When studying globally length-minimizing curves we will focus on Carnot groups, particular nilpotent Lie groups that are of great interest in subRiemannian geometry.
\begin{defi}
A \emph{stratification} of a Lie algebra $\mathfrak{g}$ is a direct sum decomposition $\mathfrak{g}:=V_1\oplus...\oplus V_s$, with
\begin{equation*}
    [V_1,V_j]=V_{j+1}, \quad \forall j\in{1,...,s},
\end{equation*}
where $V_{s+1}:=\{0\}$.
The sub-spaces $\{V_i\}_{i\in\{1,...,s\}}$ are called \emph{layers} of the stratification.  We call $s$ the \emph{step}. A Lie algebra equipped with a stratification is \emph{stratified}.\\
A \emph{Carnot group} is a simply connected Lie group with stratified Lie algebra.
On Carnot groups we consider the left-invariant extension of the first layer as distribution and a left-invariant Riemannian metric.
\label{d Carnot}
\end{defi}
One of the properties of Carnot groups we will need in the paper is the presence of  dilations.
\begin{defi}
Let $G$ be a Carnot group with stratified Lie algebra $\Lie(G)=V_1\oplus...\oplus V_s$. The \emph{dilation} $\delta_h:G\rr G$ of a factor $h>0$ is the Lie group automorphism defined by
\begin{equation*}
    \left(\delta_h\right)_*v:=h^i v,\quad\forall v\in V_i,\forall i\in\{1,...,s\}.
\end{equation*}
\label{d dilations}
\end{defi}


\section{Symplectic reduction with respect to an abelian subgroup}
Using the notions and results recalled in Section \ref{sec preliminaries} we prove Theorem \ref{c correspondence_normal_curves}.
\begin{proof}[Proof of Theorem \ref{t existence_symplectic_reduction}]

As we recalled in Section \ref{sec: abeliansubgroupactingonaLiegroup}, the action of $A$ on $G$ is Hamiltonian with momentum function $J:G\rr \Lie(A)^*$ given by (\ref{e defmomentumabelian}). Being $A$ abelian, every $\mu\in \Lie(A)^*$ is co-adjoint invariant. Therefore, we can apply Theorem \ref{t symplecticreduction}: $J^{-1}(\mu)$ is an $A$-invariant submanifold and $\Symp{T^*G}{A}{\mu}:=J^{-1}(\mu)/A$ is a smooth submanifold admitting a unique symplectic form that makes the projection a symplectic map.
Moreover, the normal Hamiltonian $H$ (defined in \eqref{e defnormalhamiltonian}) is $A$-invariant, thus the assumptions of Corollary \ref{c dynamicreduction} are satisfied. Therefore, $J$ is invariant under the Hamiltonian flow $\phi_H:[0,+\infty)\times T^*G\rr T^*G$ of $H$ and for all $\mu\in\Lie(A)^*$ there holds
\begin{equation}
    \Pi_\mu \circ \phi^t_H|_{J^{-1}(\mu)}=\phi^t_{H_\mu}\circ \Pi_\mu, \quad\forall t\in [0,+\infty),
    \label{e liftequation2}
\end{equation} 
where $\Pi:J^{-1}(\mu)\rr \Symp{T^*G}{A}{\mu}$ is the canonical projection and $H_\mu\in C^{\infty}(\Symp{T^*G}{A}{\mu})$ is defined setting $H|_{J^{-1}(\mu)}=H_\mu\circ\Pi_\mu$.\\
Fix a normal extremal $\lambda:\mathbb{R}\rr T^*G$. Being $\lambda$ normal there holds $\lambda(t-t_0)=\phi_H^t(\lambda(t_0))$, for all $t,t_0\in\mathbb{R}$ with $t_0<t$. Thus,
$J(\lambda(t))$ is constant, and from \eqref{e liftequation2} we have that the projection of $\lambda$ to $\Symp{T^*G}{A}{J(\gamma(0))}$ solves the Hamilton equations for $H_{J(\gamma(0))}$.\\
Vice versa, fix $\mu\in\Lie(A)^*$ and define $\eta(t):=\phi_{H_\mu}^t(\eta_0)$, with $t\in[0,+\infty]$, for some $\eta_0\in \Symp{T^*G}{A}{\mu}$.
We claim that for every $\lambda_0\in T^*G$ for which 
\begin{equation}
\begin{cases}
        \lambda_0\in J^{-1}(\mu),\\
        \Pi_\mu(\lambda_0)=\eta_0,\\
\end{cases}
\label{e normallift}
\end{equation} 
the normal extremal $\lambda(t):=\phi_H^t(\lambda_0)$ projects to $\eta$. Indeed, $J$ is constant along normal trajectories thus $\lambda(t)\in J^{-1}(\mu)$ for all $t\in [0,+\infty)$. Moreover, for all $t\in[0,+\infty)$, we have from \eqref{e liftequation2} that
\begin{equation*}
    \Pi_\mu\circ\lambda(t)=\Pi_\mu\circ\phi^t_H(\lambda_0)\stackrel{\eqref{e liftequation2}}{=}\phi^{t}_{H_\mu}(\Pi_\mu(\lambda_0))=\phi^{t}_{H_\mu}(\eta_0)=\eta(t).
\end{equation*}
\end{proof}

\subsection{Symplectic reduction for metabelian groups} 
One of the steps needed in the proof of Theorem \ref{t formofHmu} is exhibiting a connection $1$-form (see Definition \ref{d connection-1-form}) with integrable distribution as kernel. We do it in the following lemma.
\begin{lemma}
Let $G$ be a metabelian simply connected nilpotent Lie group. Let $A\triangleleft G$ be an abelian subgroup such that $[G,G]\subseteq A$. Consider the action of $A$ on $G$ via left-translations. There exists a connection $1$-form $\alpha\in\Omega^1(G,\Lie(A))$ 
such that $\ker(\alpha)$ is an integrable distribution.
\label{l integrable_connection}
\end{lemma}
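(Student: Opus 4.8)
The plan is to exhibit an explicit $A$-equivariant retraction $P:G\rr A$ and to set $\alpha:=P^{*}\theta_A$, where $\theta_A\in\Omega^{1}(A,\Lie(A))$ is the left-invariant Maurer--Cartan form of $A$. Since the left action is $a\cdot g=ag$, its infinitesimal generator \eqref{e infinitesimal_generator} is $\sigma(X)(g)=\df R_g X$, so the two conditions defining a connection $1$-form reduce to the reproducing property $\alpha_g(\df R_g X)=X$ of \eqref{e defconnection1form-groups} together with $L_a^{*}\alpha=\alpha$. Both will follow from the single equivariance identity $P(ag)=aP(g)$, and integrability of $\ker(\alpha)$ will be automatic: as $\theta_A$ is a pointwise isomorphism, $\ker(\alpha)=\ker(\df P)$, which is integrable as soon as $P$ is a submersion, its leaves being the connected components of the level sets of $P$. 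Equivalently, since $A$ is abelian one has $\df\theta_A=0$, so $\alpha$ is closed; this is the closed connection $1$-form alluded to in the introduction, and one could then also invoke Lemma~\ref{l 0curvature}.

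To construct $P$, I would use the Malcev basis of Lemma~\ref{l Malcev_basis}: fix a basis $Y_1,\dots,Y_m$ of $\Lie(A)$ and complete it to a Malcev basis $Y_1,\dots,Y_m,X_n,\dots,X_1$ of $\Lie(G)$. As $G$ is simply connected nilpotent, the second-kind exponential coordinates
\[
\Phi(y,x):=\exp(y_1Y_1)\cdots\exp(y_mY_m)\exp(x_nX_n)\cdots\exp(x_1X_1)
\]
define a diffeomorphism $\Phi:\mathbb{R}^{m}\times\mathbb{R}^{n}\rr G$. I then define $P$ to be the leading $A$-block, $P(\Phi(y,x)):=\exp(y_1Y_1)\cdots\exp(y_mY_m)\in A$. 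Since $A$ is a connected (closed, hence simply connected) subgroup, the map $y\mapsto\exp(y_1Y_1)\cdots\exp(y_mY_m)$ is a diffeomorphism $\mathbb{R}^{m}\rr A$, so $P$ is a smooth submersion.

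The equivariance is the crux. Writing $a\in A$ and multiplying on the left, the factor $a$ is absorbed into the leading block: because $A$ is a subgroup, the product of $a$ with the $A$-block stays inside $A$ and leaves the tail $\exp(x_nX_n)\cdots\exp(x_1X_1)$ untouched, so $\Phi^{-1}(ag)$ has the same $x$-coordinates as $\Phi^{-1}(g)$ and $A$-block equal to $a\,P(g)$; abelianness makes this transparent, as the composition of two second-kind $A$-blocks is governed by addition in $\Lie(A)$. Hence $P(ag)=aP(g)$. Differentiating $P(\exp(tX)g)=\exp(tX)P(g)$ at $t=0$ gives $\df P(\sigma(X)(g))=\df L_{P(g)}X$, whence $\alpha_g(\df R_g X)=\theta_A(\df L_{P(g)}X)=X$, which is \eqref{e defconnection1form-groups}. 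For invariance, $P\circ L_a=L_a\circ P$ as maps into $A$ yields $\df(P\circ L_a)=\df L_a\circ\df P$, and left-invariance of $\theta_A$ gives $L_a^{*}\alpha=\alpha$; since $A$ is abelian, $\Ad_a=\id$ and \eqref{e condition2-1-form} holds trivially.

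Finally, $\ker(\alpha)=\ker(\df P)$ is an integrable distribution, which completes the proof. The only genuine obstacle is the clean verification of $P(ag)=aP(g)$, and this is precisely where the hypotheses enter: normality of $A$ (together with $[G,G]\subseteq A$) is what allows the leading Malcev block to consist of all the $A$-directions, and abelianness of $A$ is what lets left multiplication by $a$ recombine into a single $A$-block without perturbing the complementary coordinates. Everything else is bookkeeping.
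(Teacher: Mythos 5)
Your proof is correct and follows essentially the same route as the paper: both trivialize the principal $A$-bundle $G$ by a global slice built from Malcev-type exponential coordinates (Lemma \ref{l Malcev_basis} plus Corwin--Greenleaf) and take $\alpha$ to be the pullback of the canonical flat $\Lie(A)$-valued form under this trivialization, the kernel then being tangent to the slice foliation, hence integrable. The only differences are cosmetic: the paper writes the decomposition as $\Phi(X,v)=\exp(X)\exp(v)$ with $v$ in a linear complement $\mathcal{H}$ and defines $\alpha$ as the pullback of the coordinate projection $\Tilde{\alpha}$, whereas you use second-kind factors for the complement and package the same form as $P^{*}\theta_A$ via the $A$-equivariant retraction $P$ and the Maurer--Cartan form.
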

\begin{proof}
Choose a vector subspace $\mathcal{H}$ such that $\Lie(A)\oplus \mathcal{H}=\Lie(G)$. Define $\Phi:\Lie(A)\times \mathcal{H}\rr G$ setting
\begin{equation}
    \Phi(X,v):=\exp(X)\exp(v), \quad\forall X\in\Lie(A),\forall v\in\mathcal{H}.
    \label{e def_phi_lemma_integrable}
\end{equation}
By Lemma \ref{l Malcev_basis} and \cite[Proposition 1.2.7]{Greanleaf-Corwin} the map $\Phi$ is a diffeomorphism. 
Define $\Tilde{\alpha}\in\Omega^1(\Lie(A)\oplus \mathcal{H},\Lie(A))$ setting
\begin{equation*}
    \Tilde{\alpha}_{(X,v)}(Y,w):=Y,\quad \forall X,Y\in\Lie(A),\forall v,w\in\mathcal{H}.
\end{equation*}
Set 
\begin{equation*}
    \alpha:=(\Phi^{-1})^*\Tilde{\alpha}\in \Omega^1(G,\Lie(A)).
\end{equation*}
We claim that $\alpha$ is a connection $1$-form for the action of $A$ over $G$ and that $\ker(\alpha)$ is an integrable distribution. Indeed, 
\begin{equation*}
    \alpha\left(\df R_{\exp(X)\exp(v)}Y\right)\stackrel{\eqref{e def_phi_lemma_integrable}}{=}\alpha\left(\df_{X,v}\Phi Y\right)=\Tilde{\alpha}_{(X,v)}(Y)=Y, \forall X,Y\in\Lie(A),\forall v\in\mathcal{H}.
\end{equation*}
Fix $Y\in\Lie(A)$. Being $(\Phi^{-1}\circ L_{\exp(Y)}\circ \Phi)(X,v)=(X+Y,v) $ for all $X\in \Lie(A)$, for all $v\in \mathcal{H}$, we have $(\Phi^{-1}\circ L_{\exp(Y)}\circ \Phi)^* \Tilde{\alpha}=\Tilde{\alpha}$ and therefore $L_{\exp(Y)}^*\alpha=\alpha$. 
We proved that $\alpha$ is $A$-invariant and satisfies \eqref{e defconnection1form-groups}, thus $\alpha$ is a connection $1$-form. The kernel of $\alpha$ is an integrable distribution since $\ker(\Tilde{\alpha})$ is and $\Phi$ is a diffeomorphism.
\end{proof}
To prove Theorem \ref{t formofHmu}, we perform the symplectic reduction on $T^*G$ using the structure of $A$-principal bundle of $G$. This will allow us to get an explicit formula for the reduced Hamiltonian.
\begin{proof}[Proof of Theorem \ref{t formofHmu}]
Once for all $\mu\in\Lie(A)^*$ we prove the existence of $\varphi_\mu$ and $\alpha$ and we show that \eqref{e formofHmu0} holds, the correspondence between normal geodesics with momentum $\mu$ and curves in $T^*(G/A)$ solving Hamilton equations for $H_\mu\circ\varphi_\mu^{-1}$ is given by Theorem~\ref{c correspondence_normal_curves}.\\
By Theorem \ref{c correspondence_normal_curves} and being $A$ abelian, we have that the action of $A$ on $T^*G$ is Hamiltonian with momentum $J:T^*G\rr \Lie(A)$ defined by \eqref{e defmomentumabelian}, and 
for all $\mu\in\Lie(A)^*$ the symplectic reduction is a smooth manifold. Fix $\mu\in\Lie(A)^*$ and use Lemma \ref{l integrable_connection} to choose a connection $1$-form $\alpha$ such that $\ker(\alpha)$ is an integrable distribution. From Theorem \ref{t reduction_with_connection_1-form} and Lemma \ref{l 0curvature} we know that there exists a symplectomorphism $\varphi_\mu:\Symp{T^*G}{A}{\mu}\rr T^*(G/A)$, the target with the canonical symplectic form. Moreover, we have $\varphi_\mu=\Bar{\psi}_0\circ \Bar{s}_\mu$ where $\Bar{\psi}_0$ and $\Bar{s}_\mu$ are defined as in the statement of Theorem \ref{t reduction_with_connection_1-form}. We have to prove that \eqref{e formofHmu0} holds. The reduced Hamiltonian $H_\mu\in C^\infty(\Symp{T^*G}{A}{\mu})$ is defined from $H_\mu\circ\Pi_\mu=H|_{J^{-1}(\mu)}$, where $\Pi_\mu:J^{-1}(\mu)\rr \Symp{T^*G}{A}{\mu}$ is the canonical projection. Therefore, we have that $H_\mu\circ\varphi_{\mu}^{-1}$ is uniquely determined by 
\begin{equation}
    (H_\mu\circ\varphi_{\mu}^{-1})\circ \varphi_\mu\circ\Pi_\mu=H|_{J^{-1}(\mu)}.
    \label{e reductionma1}
\end{equation}
Since the following diagram commutes
\begin{equation}
\begin{tikzcd}
J^{-1}(\mu) \arrow[r, "s_\mu"] \arrow[d, "\Pi_\mu"']                             & J^{-1}(0) \arrow[d, "\Pi_0"'] \arrow[rd, "\psi_0"] &          \\
\Symp{T^*G}{A}{\mu} \arrow[r, "\Bar{s}_\mu"] \arrow[rr, "\varphi_\mu"', bend right] & \Symp{T^*G}{A}{0} \arrow[r, "\Bar{\psi}_0"]           & T^*(G/A)
\end{tikzcd},
\label{e diagram_symp_reduction}
\end{equation}
equation \eqref{e reductionma1} is equivalent to 
\begin{equation}
    (H_\mu\circ\varphi_{\mu}^{-1})\circ \psi_0\circ s_\mu=H|_{J^{-1}(\mu)}.
    \label{e reductionma2}
\end{equation}
To prove \eqref{e formofHmu0}, we define $\Bar{H}\in C^{\infty}(T^*(G/A))$ setting $\Bar{H}(\eta):=\frac{1}{2}||\eta+\langle\mu,\Bar{\alpha}\rangle(\pi(\eta)) ||^2_{T^*(G/A)}+\frac{1}{2}||\langle\mu,\alpha\rangle_A(q_{\pi(\eta)})||^2_A$, for all $\eta\in T^*(G/A)$, with $q_{\pi(\eta)}\in \Pi^{-1}(\pi(\eta))$, and we show
\begin{equation}
    \Bar{H}\circ \psi_0\circ s_\mu=H|_{J^{-1}(\mu)}.
    \label{e reductionma3}
\end{equation}
By \eqref{e defshift} and \eqref{e defpsi0} we have 
\begin{equation}
    (\psi_0\circ s_\mu)(\lambda)(\pi_* X)= \langle \lambda,X\rangle-\langle \langle\mu,\alpha\rangle, X\rangle,\quad \forall X\in \Vecc(G), \forall \lambda\in J^{-1}(\mu). 
    \label{e defPmu}
\end{equation}
Fix an orthonormal base $X_1,...,X_n,Y_1,...,Y_{n_1}$ of $\Delta$, with $Y_1,...,Y_{n_1}\in \Lie(A)$. For $\lambda\in J^{-1}(\mu)$, we have
\begin{eqnarray*}
    \Bar{H}(\psi_0\circ s_\mu(\lambda))\stackrel{}{=}\frac{1}{2}\sum_1^n(\langle \psi_0\circ s_\mu(\lambda)+\langle\mu,\Bar{\alpha}\rangle, \pi_*X_i\rangle)^2+\frac{1}{2}\sum_1^{n_1} \langle \langle\mu,\alpha\rangle,Y_l\rangle^2\\
   \stackrel{\eqref{e defconnection1form-groups},\eqref{e defPmu}}{=} \frac{1}{2}\sum_1^n(\langle\lambda-\langle\mu,\alpha\rangle, X_i\rangle+\langle\langle\mu,\alpha\rangle,X_i\rangle)^2+\frac{1}{2}\sum_1^{n_1} \langle \mu,Y_l\rangle^2 \\
   \stackrel{}{=}\frac{1}{2}\sum_1^n \langle\lambda,X_i\rangle^2+\frac{1}{2}\sum_1^{n_1}\langle\lambda,Y_l\rangle^2=H(\lambda).
\end{eqnarray*}
We proved \eqref{e reductionma3}, thus, since $H_\mu\circ\varphi_\mu^{-1}$ is uniquely characterized by \eqref{e reductionma2}, we proved \eqref{e formofHmu0}.
\end{proof}
\subsection{The reduction in coordinates}
In simply connected nilpotent Lie groups, it is convenient to work with exponential coordinates since the normal Hamiltonian will have a polynomial form. 
When working with quotients, it is natural to work in exponential coordinates of the second type, as writing the projection to right cosets is particularly simple in this setting. 
We introduce some notation to simplify the definition of this kind of coordinates: we will write $\prod_n^1\exp(x_iX_i)$ for $\exp(x_nX_n)\exp(x_{n-1}X_{n-1})\cdot...\cdot \exp(x_1X_1)$ and $\prod_1^m\exp(\theta_lY_l)$ for $\exp(\theta_1Y_1)\cdot...\cdot \exp(\theta_m Y_m)$. A statement equivalent to the following lemma was stated for rank $2$ Carnot groups in \cite[Theorem 2.2]{Monti-Socionovo}.
\begin{lemma}
Let $G$ be a metabelian simply connected nilpotent Lie group and $A\triangleleft G$ an abelian subgroup containing $[G,G]$. Fix a base $\{Y_1,...,Y_m\}$ of $\Lie(A)$ and complete it to a base $\{Y_1,...,Y_m,X_1,...,X_n\}$ of $\Lie(G)$. Define exponential coordinates of second type $\Phi:\mathbb{R}^{m+n}\rr G$ setting
\begin{equation}
    \Phi(\theta_1,...,\theta_m,x_1,...,x_n):=\prod_{l=1}^m\exp(\theta_lY_l)\prod_{i=n}^1\exp(x_iX_i), 
    \label{e coordinatesofsecondtype}
\end{equation}
for all  $\theta_1,...,\theta_m,x_1,...,x_n\in \mathbb{R}$.
Then
\begin{equation}
    X_i(\Phi(\theta, x))=\frac{\partial \Phi}{\partial x_i}(x,\theta)+\df R_{\Phi(\theta, x)}\mathcal{A}_i(x),  \quad \forall \theta\in\mathbb{R}^m,\forall x\in\mathbb{R}^n,
    \label{e expressionX}
\end{equation}
and
\begin{equation}
    Y_l(\Phi(\theta, x))=\df R_{\Phi(\theta, x)}\beta_l(x), \quad\forall \theta\in\mathbb{R}^m,\forall x\in\mathbb{R}^n,
    \label{e expressionY}
\end{equation} 
where $\mathcal{A}_1,...,\mathcal{A}_n,\beta_1,...,\beta_m:\mathbb{R}^n\rr \Lie(A)$ are polynomial maps (we identify $\Lie(A)$ with $T_1A$) defined 
for all $x\in\mathbb{R}^n,\theta\in\mathbb{R}^m,i\in\{1,...,n\},l\in\{1,...,m\}$, by
\begin{eqnarray}
    &\mathcal{A}_j(x):=-\Ad_{\prod_{i=n}^{1}\exp(x_iX_i)}\left(\Ad_{\prod_{i=  1}^{j-1}\exp(-x_iX_i)}X_j-X_j\right), \label{e defAi}\\
    &\beta_l(x):=\Ad_{\prod_{i=n}^{1}\exp(x_iX_i)}Y_l. \label{e defbetal}
\end{eqnarray}

\label{l coordinates}
\end{lemma}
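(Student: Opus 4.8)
The plan is to reduce both identities to a single computation with the adjoint representation, exploiting the elementary relation $V(g) = \df L_g V = \df R_g(\Ad_g V)$ between the left-invariant vector field generated by $V\in\Lie(G)$ and right translations; this is precisely what makes the right-invariant fields $\df R_{\Phi(\theta,x)}(\cdot)$ appear on the right-hand sides of \eqref{e expressionX} and \eqref{e expressionY}. I write $a := \prod_{l=1}^m\exp(\theta_l Y_l)\in A$ and $b := \prod_{i=n}^1\exp(x_iX_i)$, so that $\Phi(\theta,x)=ab$ and $\Ad_{\Phi(\theta,x)}=\Ad_a\Ad_b$. The two structural facts I use throughout are that $\Lie(A)$ is an ideal of $\Lie(G)$ (because $[G,G]\subseteq A$ forces $[\Lie(G),\Lie(G)]\subseteq\Lie(A)$), so $\Ad_g$ preserves $\Lie(A)$ for every $g\in G$, and that $\Ad_a$ restricts to the identity on $\Lie(A)$ for $a\in A$ (because $A$ is abelian).

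I would prove \eqref{e expressionY} first, as the model case. The identity above gives $Y_l(\Phi(\theta,x)) = \df R_{\Phi(\theta,x)}\Ad_a\Ad_b Y_l$. Since $Y_l\in\Lie(A)$ and $\Lie(A)$ is an ideal, $\Ad_b Y_l\in\Lie(A)$; this is exactly $\beta_l(x)$ from \eqref{e defbetal}, which therefore takes values in $\Lie(A)$. Then $\Ad_a$ fixes it because $A$ is abelian, yielding $Y_l(\Phi(\theta,x)) = \df R_{\Phi(\theta,x)}\beta_l(x)$.

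The content of \eqref{e expressionX} is the appearance of the coordinate field $\partial\Phi/\partial x_i$. Splitting the product as $\Phi(\theta,x) = P_i\exp(x_iX_i)Q_i$, with $P_i := a\,b_{>i}$ to the left and $Q_i := b_{<i}$ to the right of the $i$-th factor, a Leibniz differentiation together with the composition rule for right translations and the commutativity of left and right translations gives, after right-translating back to $\Phi(\theta,x)$, the clean formula $\partial\Phi/\partial x_i = \df R_{\Phi(\theta,x)}\Ad_{P_i}X_i = \df R_{\Phi(\theta,x)}\Ad_a\Ad_{b_{>i}}X_i$. Subtracting this from $X_i(\Phi(\theta,x)) = \df R_{\Phi(\theta,x)}\Ad_a\Ad_b X_i$ reduces everything to the algebraic identity $\Ad_a(\Ad_b X_i-\Ad_{b_{>i}}X_i)=\mathcal{A}_i(x)$. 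Using $b\,b_{<i}^{-1}=b_{>i}\exp(x_iX_i)$ and $\Ad_{\exp(x_iX_i)}X_i=X_i$, the definition \eqref{e defAi} simplifies to $\mathcal{A}_i(x)=\Ad_b X_i-\Ad_{b_{>i}}X_i$; and because $\Ad_g X_i-X_i$ always lies in $[\Lie(G),\Lie(G)]\subseteq\Lie(A)$ (each term of the exponential series for $\Ad$ carries at least one bracket), the difference $\Ad_b X_i-\Ad_{b_{>i}}X_i=\Ad_{b_{>i}}(\Ad_{\exp(x_iX_i)b_{<i}}X_i-X_i)$ lands in $\Lie(A)$, so $\Ad_a$ fixes it. This closes \eqref{e expressionX}.

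I expect the main obstacle to be purely organizational: carrying out the differentiation of the ordered product defining $\Phi$ while correctly tracking which translations are left and which are right, so that $\partial\Phi/\partial x_i$ emerges as $\df R_{\Phi(\theta,x)}\Ad_{P_i}X_i$ rather than a mis-ordered expression. Once that formula is secured, the remainder is the short reconciliation with \eqref{e defAi}. Finally, polynomiality of the $\mathcal{A}_i$ and $\beta_l$ is immediate from nilpotency: $\Ad_{\exp(Z)}=\sum_{k\ge 0}\tfrac{1}{k!}\ad_Z^k$ is a finite, hence polynomial, sum, and finite products of such maps applied to fixed vectors depend polynomially on $x$.
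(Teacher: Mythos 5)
Your proof is correct and takes essentially the same route as the paper: both arguments differentiate the ordered product defining $\Phi$ by conjugating the increment $\exp(\epsilon X_i)$ through the neighboring factors, rewrite everything via the adjoint representation, and close with the same two structural facts, namely that $\Lie(A)$ is an ideal (hence $\Ad$-invariant) and that $\Ad_a=\id$ on $\Lie(A)$ for $a\in A$ because $A$ is abelian. The only divergence is the direction of conjugation: the paper pushes $\exp(\epsilon X_i)$ rightward through $\prod_{k=i-1}^{1}\exp(x_kX_k)$, obtaining $\partial\Phi/\partial x_i=\df L_{\Phi(\theta,x)}\Ad_{\prod_{k=1}^{i-1}\exp(-x_kX_k)}X_i$, which reproduces \eqref{e defAi} verbatim once $\df L_{\Phi}$ is converted into $\df R_{\Phi}\Ad_{\Phi}$, whereas your leftward conjugation lands on right-translations immediately but costs you the short (and correctly executed) reconciliation $\mathcal{A}_i(x)=\Ad_{b}X_i-\Ad_{b_{>i}}X_i$ in your notation — a purely cosmetic difference.
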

\begin{proof}
Since $[G,G]\subseteq A$, it is clear that the $\mathcal{A}_i$'s and the $\beta_l$'s 
take value in $\Lie(A)$. Moreover,  \eqref{e expressionY} comes trivially from \eqref{e defbetal} and the fact that $A$ is abelian.\\
We have to prove \eqref{e expressionX}.
We start computing some partial derivatives of $\Phi$. Fix $j\in\{1,...,n\}$, $\theta\in\mathbb{R}^m$ and $x\in\mathbb{R}^n$. There holds
\begin{eqnarray*}
\frac{\partial \Phi}{\partial x_j}(\theta,x)&=&\frac{\df}{\df \epsilon} L_{\prod_{l=1}^m\exp(\theta_lY_l)\prod_{i=n}^j\exp(x_iX_i)}R_{\prod_{i=j-1}^{1}\exp(x_iX_i)}\exp(\epsilon X_j)\big|_{\epsilon=0}\\
&=&\frac{\df}{\df \epsilon} L_{\Phi(\theta, x)}C_{\prod_{i=  1}^{j-1}\exp(-x_iX_i)}\exp(\epsilon X_j)\big|_{\epsilon=0}\\
&=&\df L_{\Phi(\theta, x)}\Ad_{\prod_{i=  1}^{j-1}\exp(-x_iX_i)}X_j.
\end{eqnarray*}
Consequently,
\begin{eqnarray}
     \frac{\partial \Phi}{\partial x_j}(\theta,x)-X_j(\Phi(\theta, x))= \df L_{\Phi(\theta, x)}\left(\Ad_{\prod_{i=  1}^{j-1}\exp(-x_iX_i)}X_j-X_j\right)\\
    =\df R_{\Phi(\theta, x)} \Ad_{\Phi(\theta, x)}\left(\Ad_{\prod_{i=  1}^{j-1}\exp(-x_iX_i)}X_j-X_j\right)\\
    =\df R_{\Phi(\theta, x)} \Ad_{\prod_{i=n}^{1}\exp(x_iX_i)}\left(\Ad_{\prod_{i=  1}^{j-1}\exp(-x_iX_i)}X_j-X_j\right),
    \label{e inlemmacoord1}
\end{eqnarray}
where in the last equality we used $\Ad_{\prod_{i=  1}^{j-1}\exp(-x_iX_i)}X_j-X_j\in \Lie(A)$ and that $A$ is abelian. From \eqref{e defAi} and \eqref{e inlemmacoord1}, we get \eqref{e expressionX}.
\end{proof}
We have from Lemma \ref{l Malcev_basis} and \cite[Proposition 1.2.7]{Greanleaf-Corwin} that the coordinates defined by \eqref{e coordinatesofsecondtype} are a global diffeomorphism. We rewrite Theorem \ref{t formofHmu} using exponential coordinates of second type.

\begin{Cor}
Let $G$ be a metabelian simply connected nilpotent Lie group with left-invariant subRiemannian structure. Let $A\triangleleft G$ be an abelian subgroup containing $[G,G]$. For $\mu\in \Lie(A)$, let $\varphi_\mu:\Symp{T^*G}{A}{\mu}\rr T^*(G/A)$ be the symplectomorphism coming from Theorem \ref{t formofHmu}. Then there exist polynomial maps $\mathcal{A}_1,...,\mathcal{A}_{n},\beta_1,...,\beta_{n_1}:\mathbb{R}^{n} \rr \Lie(A)$ and exponential coordinates of second type $\Bar{\Phi}:\mathbb{R}^n\rr G/A$ for which equation \eqref{e formofHmu0} rewrites as \eqref{e formofHmu}.
\label{t formofHmu2}
\end{Cor}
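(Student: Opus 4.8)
The plan is to evaluate the intrinsic identity \eqref{e formofHmu0} in the frame and coordinates supplied by Lemma~\ref{l coordinates}, after making a careful choice of connection $1$-form. First I would fix one basis that plays all the required roles at once: take $Y_1,\dots,Y_{n_1}$ to be an orthonormal basis of $\Lie(A)\cap\Delta_1$, extend it to a basis $Y_1,\dots,Y_m$ of $\Lie(A)$, and take $X_1,\dots,X_n$ to be an orthonormal basis of $(\Lie(A)\cap\Delta_1)^\perp\cap\Delta_1$. Since $\Delta_1$ is bracket-generating and $[G,G]\subseteq\Lie(A)$, every iterated bracket of $\Delta_1$ lies in $\Lie(A)$, whence $\Lie(G)=\Delta_1+\Lie(A)=\Lie(A)\oplus\Span\{X_1,\dots,X_n\}$; in particular $n=\dim G-\dim A$, so that the $n$ of the orthonormal frame agrees with the $n$ of Lemma~\ref{l coordinates} and $X_1,\dots,X_n$ is precisely a completion of $Y_1,\dots,Y_m$ to a basis of $\Lie(G)$. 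I would then use the second-type coordinates $\Phi$ of \eqref{e coordinatesofsecondtype}, the induced second-type coordinates $\Bar{\Phi}:\mathbb{R}^n\to G/A$ of \eqref{e coord_quotient}, and the polynomial maps $\mathcal{A}_i,\beta_l$ of \eqref{e defAi}--\eqref{e defbetal}.

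The crucial choice is the connection $1$-form. Rather than the form built in Lemma~\ref{l integrable_connection} from genuine exponentials, I would take the form adapted to $\Phi$: set $\Tilde{\alpha}_{(\theta,x)}(\dot\theta,\dot x):=\sum_l\dot\theta_l Y_l$ and $\alpha:=(\Phi^{-1})^*\Tilde{\alpha}$, which is well defined because $\Phi$ is a diffeomorphism (Lemma~\ref{l Malcev_basis}). A direct check, using that in these coordinates the $A$-action reads $\theta\mapsto\theta+\text{const}$ and that the generator $\sigma(X)(g)=\df R_g X$ corresponds to the constant $\theta$-translation by $X$, shows that $\alpha$ satisfies \eqref{e defconnection1form-groups} and is $A$-invariant, hence is a connection $1$-form, and that $\ker\alpha=\Span\{\frac{\partial\Phi}{\partial x_i}\}$, which is integrable. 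As $\alpha$ is an admissible integrable-kernel connection form, Theorem~\ref{t formofHmu} applies to it and \eqref{e formofHmu0} holds. The payoff is that $\alpha(\frac{\partial\Phi}{\partial x_i})=0$, so contracting \eqref{e expressionX} and \eqref{e expressionY} with $\alpha$ and invoking \eqref{e defconnection1form-groups} (recall $\mathcal{A}_i(x),\beta_l(x)\in\Lie(A)$) gives at once $\alpha(X_i(\Phi))=\mathcal{A}_i(x)$ and $\alpha(Y_l(\Phi))=\beta_l(x)$. I stress that it is exactly this choice of $\alpha$ that makes the maps $\mathcal{A}_i$ of \eqref{e defAi} appear; the exponential form of Lemma~\ref{l integrable_connection} would in general produce different coefficients.

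It then remains to read off the two norms in coordinates. Write $\eta=T^*\Bar{\Phi}^{-1}(p,x)$, so that $\pi(\eta)=\Bar{\Phi}(x)$ and $\eta(\df\Bar{\Phi}\,\partial_{x_i})=p_i$. The field $\df R_{\Phi}\mathcal{A}_i$ is a generator of the $A$-action, hence vertical, so projecting \eqref{e expressionX} by $\df\Pi$ gives $\df\Pi X_i=\df\Bar{\Phi}\,\partial_{x_i}=:\Bar{X}_i$; as $X_i$ is orthonormal and $\Pi$ is a submetry, $\{\Bar{X}_i\}$ is a left-invariant orthonormal frame of $G/A$, whence $||\cdot||_{T^*(G/A)}^2$ is the sum of the squared coefficients against $\Bar{X}_i$. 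By the definition of $\Bar{\alpha}$, the coefficient of $\langle\mu,\Bar{\alpha}\rangle(\pi(\eta))$ on $\Bar{X}_i$ equals $\langle\mu,\alpha(X_i(\Phi))\rangle=\langle\mu,\mathcal{A}_i(x)\rangle$, so the first term of \eqref{e formofHmu0} becomes $\tfrac12\sum_1^n|p_i+\langle\mu,\mathcal{A}_i(x)\rangle|^2$. For the second term, evaluating the restriction $\langle\mu,\alpha\rangle_A$ on the orthonormal frame $Y_1,\dots,Y_{n_1}$ of $\Lie(A)\cap\Delta_1$ and using $\alpha(Y_l(\Phi))=\beta_l(x)$ yields $\tfrac12\sum_1^{n_1}|\langle\mu,\beta_l(x)\rangle|^2$, which depends only on $x$ and is therefore constant along $\Pi^{-1}(\pi(\eta))$, so the expression is well posed. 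Adding the two terms gives exactly \eqref{e formofHmu}, and the polynomiality of $\mathcal{A}_i,\beta_l$ is part of Lemma~\ref{l coordinates}. The only genuinely delicate steps are the verification that the adapted $\alpha$ is a bona fide connection $1$-form with integrable kernel and the identification $\df\Pi X_i=\df\Bar{\Phi}\,\partial_{x_i}$ of the orthonormal frame on $G/A$ with the coordinate frame; everything else is bookkeeping with the formulas of Lemma~\ref{l coordinates}.
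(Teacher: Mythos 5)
Your proposal is correct and follows essentially the same route as the paper's proof: the paper likewise fixes an orthonormal frame $X_1,\dots,X_n,Y_1,\dots,Y_{n_1}$ of the distribution with $Y_l\in\Lie(A)$, works in the second-type coordinates of Lemma \ref{l coordinates}, chooses exactly the connection $1$-form with kernel $\Span\left\{\frac{\partial \Phi}{\partial x_i}\right\}$, and evaluates the two norms in \eqref{e formofHmu0} on the frames $\pi_*X_i$ and $Y_l$ to obtain \eqref{e formofHmu}. The only difference is one of detail, not of method: you explicitly construct this adapted $\alpha$ and verify that it is a connection $1$-form with integrable kernel (and that $\df\Pi X_i$ is the coordinate orthonormal frame on $G/A$), steps the paper asserts implicitly.
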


\begin{proof}

Fix an orthonormal base $X_1,...X_n,Y_1,...,Y_{n_1}$ of the distribution with $Y_1,...,Y_{n_1}\in\Lie(A)$. Extend $Y_1,...,Y_{n_1}$ to a base $Y_1,...,Y_{m}$ of $\Lie(A)$. Define $\Phi:\mathbb{R}^{n+m}\rr G$ as in \eqref{e coordinatesofsecondtype}. By Lemma \ref{l Malcev_basis} and \cite[Theorem 1.2.12]{Greanleaf-Corwin} we have that the map $\Bar{\Phi}:\mathbb{R}^n\rr G/A$, defined from
\begin{equation}
    \Bar{\Phi}(x_1,...,x_m):=A\prod_{i=n}^1\exp(x_iX_i),  \ \ \forall x_1,...,x_n\in \mathbb{R},
    \label{e coord_quotient}
\end{equation}
is a global diffeomorphism. By \cite[Proposition 6.3.2]{marsden_ratiu-Introduction_to_mechanics_and_symmetry}, 
$\Phi$ and $\Bar{\Phi}$ induce canonical symplectomorphisms $T^*\Phi: T^*G\rr T^*\mathbb{R}^{n+m}$ and $T^*\Bar{\Phi}:T^*(G/A)\rr T^*\mathbb{R}^n$.
Choose a connection $1$-form $\alpha$ such that
\begin{equation}
    \ker(\alpha)_{\Phi}=\Span\left\{\frac{\partial \Phi}{\partial x_i}\right\}_{i=1,...,n}.
    \label{e kerofalpha}
\end{equation}
Fix $\eta\in T^*(G/A)$ and write $(p_x,x):=T^*\Bar{\Phi}(\eta)$ . If $v_i:=\pi_*X_i$ for all $i\in\{1,...,n\}$, we have
\begin{eqnarray*}
    ||\eta+ \langle\mu,\Bar{\alpha}\rangle||_{T^*(G/A)}^2& = &\sum_1^n(\langle \eta+ \langle\mu,\Bar{\alpha}\rangle(\pi(\eta)),v_i \rangle)^2\\
    & =& \sum_1^n(p_i+\langle\langle\mu,\alpha\rangle,X_i(\Phi(\theta,x))\rangle)^2\\
    &\stackrel{\eqref{e expressionX},\eqref{e kerofalpha}}{=} &\sum_1^n(p_i+\langle\langle\mu,\alpha\rangle,\df R_{\Phi(\theta,x)}\mathcal{A}_i(x)\rangle)^2\\
    &\stackrel{\eqref{e defconnection1form-groups}}{=}&\sum_1^n(p_i+\langle\mu,\mathcal{A}_i(x)\rangle)^2,
    \label{e 1sympredmetabelian}
\end{eqnarray*}
where in the second and third equation $\theta$ is some (any) element of $\mathbb{R}^m$.
Similarly by \eqref{e expressionY} and \eqref{e defconnection1form-groups} we get
\begin{equation*}
    ||\langle\mu,\alpha\rangle_A ||^2=\sum_1^{n_1}\langle \mu,\beta_l(x)\rangle^2.
\end{equation*}
Substituting the latter two equations in \eqref{e formofHmu0} we get \eqref{e formofHmu}, concluding the proof of the corollary.
\end{proof}

\subsubsection{The computation of the lift}\label{s computation_lift}

Using exponential coordinates of the second type, we are able to write an explicit set of polynomial differential equations for the lift of curves in the symplectic reduction. Let $G$ be a metabelian simply connected nilpotent Lie group with left-invariant subRiemannian structure. Choose an Abelian subgroup $A < G$ containing $[G,G]$. Fix $\mu\in \Lie(A)^*$ and a curve $\eta:[0,1]\rr \Symp{T^*G}{A}{\mu}$ solving the Hamilton equations for the reduced normal Hamiltonian $H_\mu\in C^\infty(\Symp{T^*G}{A}{\mu})$. 
A lift $\lambda:[0,1]\rr T^*G$ is a normal extremal starting from some $\lambda_0\in T^*G$ for which \eqref{e normallift} holds.
Fix an orthonormal base $X_1,...,X_n,Y_1,...,Y_{n_1}$ of the distribution, with $Y_l\in\Lie(A)$ for all $l\in\{1,...,n_1\}$. Complete $Y_1,...,Y_{n_1}$ to a base $Y_1,...,Y_m$ of $\Lie(A)$.
Denote with $T^*\Phi:T^*G\rr T^*\mathbb{R}^{n+m}$ the symplectomorphism induced by the map $\Phi$ defined in \eqref{e coordinatesofsecondtype}. By \eqref{e defnormalhamiltonian}, \eqref{e expressionX} and \eqref{e expressionY} the normal Hamiltonian is
\begin{equation}
         (H\circ T^*\Phi^{-1})(p_x,p_\theta,x,\theta)= \frac{1}{2} \sum_1^n| p_{x_i}+ \langle p_\theta,\Phi^*\mathcal{A}_i(x)\rangle|^2+ \frac{1}{2} \sum_1^{n_1}|\langle p_\theta ,\Phi^*\beta_l(x)\rangle|^2, \label{e H_in_coordinates}
\end{equation}
for all $(p_\theta,\theta)\in T^*\mathbb{R}^m$, for all $(p_x,x)\in T^*\mathbb{R}^n$.
Set $(p_x,p_\theta,x,\theta):=T^*\Phi\circ \lambda$.
The curve $(p_x,p_\theta,x,\theta)$ solves the Hamilton equations for $H\circ T^*\Phi^{-1}$, thus $p_\theta$ is constantly equal to $\Phi^*\mu$ and $(x,p_x)$ solves the Hamilton equations for the reduced Hamiltonian $\Tilde{H}_\mu$ defined in \eqref{e formofHmu} (which is exactly what we know to be true from Corollary \ref{t formofHmu2}, since we have $(x,p_x)=T^* \Bar{\Phi}\circ \varphi_\mu \circ\Pi_\mu(\lambda)$). Moreover, using the Hamilton equations for $H\circ T^*\Phi^{-1}$ and \eqref{e H_in_coordinates}, we get a differential equation for the remaining coordinates of $\lambda$:
\begin{equation}
    \dot{\theta}_i=\sum_1^n(p_{x_i}+\langle \mu, \mathcal{A}_i(x)\rangle)\df\theta_i(\Phi^*\mathcal{A}(x)) +\sum_1^{n_1} \langle \mu,\beta_l(x)\rangle\df\theta_i(\Phi^*\beta_l(x)),
   \label{e liftequation_coordinates}
\end{equation}
where $i\in\{1,...,m\}$. We use \eqref{e liftequation_coordinates} when we need to compute the end-point of a normal trajectory.
Equation \eqref{e liftequation_coordinates} implies that if there exist $t_1,t_2\in\mathbb{R}$ such that $x(t_1)=x(t_2)$ then $\dot{\theta}(t_1)=\dot{\theta}(t_2)$. This, together with the observation that the $\partial_{\theta_l}$'s are right-invariant vector-fields, leads us to the following remark.
\begin{Remark}
    Let $\gamma:\mathbb{R}\rr G$ be a normal trajectory. Assume there exist $t_1,t_2\in\mathbb{R}$ such that for $i\in\{1,2\}$ we have $\gamma(t_i)\in A$ and $\dot{\gamma}(t_i)\in T_{\gamma(t_i)}A$. Then 
    \begin{equation*}
        \dot{\gamma}(t_2)=\df R_{\gamma(t_2)\gamma(t_1)^{-1}}\dot{\gamma}(t_1).
    \end{equation*}
    \label{r rmk_on_lift}
\end{Remark}
\subsubsection{The reduced Hamiltonian in coordinates}
When we deal with a Carnot group $G$, we will usually identify $G$ with $(\mathbb{R}^{n+m},\cdot)$, using coordinates $(\theta,x)$, with $\theta\in\mathbb{R}^m,x\in\mathbb{R}^n$. We will directly exhibit a left-invariant frame $X_1,...,X_n,Y_1,...,Y_m\in \Vecc(\mathbb{R}^{m+n})$ of the tangent bundle made of vector fields of the form
\begin{eqnarray}
   X_i(\theta,x)&=&\partial_{x_i}+\sum_{k=1}^m X_i^k(x_1,...,x_{i-1})\partial_{\theta_k}, \forall i\in \{1,...,n\}  \label{e X_i_examples},\\
   Y_l(\theta,x)&= &\sum_{k=1}^m Y_l^k(x)\partial_{\theta_k}, \forall l\in\{1,...,m\},
   \label{e Y_l_examples}
\end{eqnarray}
for all $x\in\mathbb{R}^n$, for all $\theta\in\mathbb{R}^m$, for some suitable polynomial functions $X_j^k,Y_l^k:\mathbb{R}^n\rr \mathbb{R}$. Moreover, we will ask the vector fields $X_1,...,X_n,Y_1,...,Y_{n_1}$ to form an orthonormal frame of the distribution for some $n_1\leq m$.
With this identification of $G$ with $\mathbb{R}^{m+n}$ and the choice of the base $Y_1,...,Y_m,X_1,...,X_n$, the coordinates $\Phi:\mathbb{R}^{m+n}\rr \mathbb{R}^{m+n}$ defined in Lemma \ref{l coordinates} are the identity map. Moreover, we have that $\Span(Y_1,...,Y_m)$ is an abelian sub-algebra, thus we can set $A:=\exp\left(\Span\left\{Y_1,...,Y_m\right\}\right)$ and apply Theorem \ref{t formofHmu}. 
Using that $\partial_{\theta_1},...,\partial_{\theta_m}$ are right-invariant vector-fields (it is trivial to prove this statement from \eqref{e coordinatesofsecondtype}), we have for all $x\in\mathbb{R}^n$ and for all $\theta\in\mathbb{R}^m$ that
\begin{equation}
\begin{split}
    \mathcal{A}_i(x)  & \stackrel{\eqref{e expressionX}}{=}  \df R_{(x,\theta)}^{-1}(X_i(x)-\partial_{x_i})\\
    &\stackrel{\eqref{e X_i_examples}}{=}  \sum_1^m X_i^k(x)\df R_{(x,\theta)}^{-1}\partial_{\theta_k}\\
    &=  \sum_1^m X_i^k(x)\partial_{\theta_k}\\
    &= X_i(x)-\partial_{x_i}.
\end{split}
   \label{e Aisimply}
\end{equation}
Analogously,
\begin{equation}
\begin{split}
   \beta_l(x)&\stackrel{\eqref{e expressionY}}{=}\df R_{(x,\theta)}^{-1}Y_l(x)\\
&\stackrel{\eqref{e Y_l_examples}}{=}\sum Y_l^k\df R_{(x,\theta)}^{-1}\partial_{\theta_k}\\
&=\sum Y_l^k\partial_{\theta_k}\\
&=Y_l(x).
\end{split}
    \label{e betalsimply}
\end{equation}
Being $\partial_{\theta_1},...,\partial_{\theta_l}$ a frame for the right-invariant extension of $T_1A$, we can write every right-invariant extension of a co-vector $\mu\in\Lie(A)^*\simeq T^*_1A$ as
\begin{equation*}
    \mu=\sum_1^m \mu^k\df\theta_k,
\end{equation*}
with $\mu^1,...,\mu^m\in\mathbb{R}$. 
Identify the quotient $G/A$ with $\mathbb{R}^n$ so that the projection $\Pi:T^*G\rr T^*(G/A)$ is the map $\Pi: T^*\mathbb{R}^{m}\times T^*\mathbb{R}^{n}\rr T^*\mathbb{R}^n$
\begin{equation}
    \Pi(\eta_1,\eta_2)=\eta_1, \forall \eta_1\in T^*\mathbb{R}^n,\forall \eta_2\in T^*\mathbb{R}^m. 
\end{equation}
The map $\Pi$ is a symplectic map between $T^*\mathbb{R}^{m+n}$ and $T^*\mathbb{R}^n$ with their canonical symplectic forms. 
By \eqref{e formofHmu}, \eqref{e Aisimply} and \eqref{e betalsimply} we can write the reduced Hamiltonian $H_\mu\in C^{\infty}(T^*\mathbb{R}^n)$ as
\begin{equation}
    H_\mu(p_x,x)=\frac{1}{2} \sum_1^n| p_{x_i}+ \langle \mu,X_i(x)-\partial_{x_i}\rangle|^2+ \frac{1}{2} \sum_1^{n_1}|\langle \mu ,Y_l(x)\rangle|^2,
    \label{e formofHmuexamples}
\end{equation}
for all $(p_x,x)\in T^*\mathbb{R}^n$.
Equation \eqref{e formofHmuexamples} is what we will use when dealing with explicit examples. 

\section{Consequences of the symplectic reduction}
This section is dedicated to the proof of some of the immediate consequences of the symplectic reduction we perform on metabelian nilpotent groups.

\begin{proof}[Proof of Corollary \ref{c consequences}]
Denote $n:=\dim(G)$ and $d:=\dim(A)$. Assume there exists $f_1,...,f_{n-d}\in C^\infty\left(T^*(G/A)\times \Lie(A)^*\right)$ such that for all $\mu\in\Lie(A)^*$ the functions $f_1(\cdot,\mu),...,f_{n-d}(\cdot,\mu):T^*(G/A)\rr \mathbb{R}$ are a set of independent prime integrals for $H_\mu\circ\varphi_\mu^{-1}$ that are in involution. Fix a connection $1$-form $\alpha\in\Omega^1(T^*G,\Lie(A))$ with integrable distribution as a kernel, see Lemma \ref{l integrable_connection}. 
For $\mu\in\Lie(A)^*$, define the map $P_\mu:J^{-1}(\mu)\rr T^*(G/A)$ by
\begin{equation}
    P_\mu(\lambda)(\pi_*X):=\langle\lambda, X\rangle-\langle\langle\mu,\alpha\rangle, X\rangle, \ \ \forall\lambda\in\ J^{-1}(\mu), \forall X\in\Vecc(T^*G), 
\end{equation}
where $\pi:G\rr G/A$ is the canonical projection and $J:T^*G\rr\Lie(A)$ is the momentum map.
For $i\in\{1,...,n-d\}$, define $\Tilde{f}_i\in C^\infty(T^*G)$ setting for all $\lambda\in T^*G$
\begin{equation}
    \Tilde{f}_i(\lambda):=f_i(P_{J(\lambda)}(\lambda),J(\lambda)).
\end{equation}
Fix a base $Y_1,...,Y_d$ of $\Lie(A)$ and set $J_i:=J(\cdot)(Y_i)$ for all $i\in\{1,...,d\}$. We claim that the functions $\{J_1,...,J_d,\Tilde{f}_1,...,\Tilde{f}_{n-d}\}$ are a set of independent prime integrals for the flow of the normal Hamiltonian that are in involution. By \eqref{e diagram_symp_reduction} and \eqref{e defPmu} we have that the following diagram commutes
\begin{equation*}
    \begin{tikzcd}
J^{-1}(\mu) \arrow[rd, "P_\mu"] \arrow[d, "\Pi_\mu"] &          \\
\Symp{T^*G}{A}{\mu} \arrow[r, "\varphi_\mu"]  & T^*(G/A)
\end{tikzcd}.
\end{equation*}
In particular, for every $i\in\{1,...,n-d\}$ the function $\Tilde{f}_i$ is $A$-invariant, and if $\Tilde{f}_{i,\mu}\in C^\infty(\Symp{T^*G}{A}{\mu})$ is defined by $ \Tilde{f_i}|_{J^{-1}(\mu)}=\Tilde{f}_{i,\mu}\circ\Pi_\mu$, we have
\begin{equation}
    \Tilde{f}_{i,\mu}\circ \varphi_\mu^{-1}=f_i(\cdot, \mu).
    \label{e f_i-reduced}
\end{equation}
The fact that the functions $\{J_1,...,J_d,\Tilde{f}_1,...,\Tilde{f}_{n-d}\}$ are independent is an exercise. We show that they are in involution. By \cite[Proposition 1.1]{ratiu2006involution} we have for all $\mu\in\Lie(A)^*$, for all $i,j\in\{1,...,n-d\}$, that 
\begin{equation}
    \{\Tilde{f}_i,\Tilde{f}_j\}|_{J^{-1}(\mu)}=\{\Tilde{f}_{i,\mu},\Tilde{f}_{j,\mu}\}\circ\Pi_\mu. 
    \label{e f_i-reduced-bracket}
\end{equation}
For all $i,j\in\{1,...,n-d\}$, equations \eqref{e f_i-reduced} and \eqref{e f_i-reduced-bracket} together with the assumption that $\{f_i(\cdot,\mu),f_j(\cdot,\mu)\}=0$ for all $\mu\in\Lie(A)^*$, imply that $\{\Tilde{f}_i,\Tilde{f}_j\}=0$. The fact the $\Tilde{f}_i$'s are in involution with $J_l$ for all $l\in\{1,...,d\}$ is an immediate consequence of the $A$-invariance of this functions.\\
To finish the proof one needs to show that $\{J_1,...,J_d,\Tilde{f}_1,...,\Tilde{f}_{n-d}\}$ is a set of prime integrals.
The fact that $\{H,J_l\}=0$ for all $l\in\{1,...,d\}$ follows by the $A$-invariance of $H$. 
Fix $i\in\{1,...,n-d\}$. By \cite[Proposition 1.1]{ratiu2006involution} we have for all $\mu\in\Lie(A)^*$ that
\begin{equation}
    \{\Tilde{f}_i,H\}|_{J^{-1}(\mu)}=\{\Tilde{f}_{i,\mu},H_\mu\}\circ\Pi_\mu.
    \label{e H-reduced-bracket}
\end{equation}
Being $\{H_\mu\circ\varphi_\mu^{-1},f_i(\cdot,\mu)\}=0$ for all $\mu\in\Lie(A)^*$ by assumption, equations \eqref{e f_i-reduced} and \eqref{e H-reduced-bracket} imply that
$\{H,\Tilde{f}_i\}=0$. We proved that $\{J_1,...,J_d,\Tilde{f}_1,...,\Tilde{f}_{n-d}\}$ is a set of prime integrals, thus the proof of the first part of corollary is concluded.\\
Assume now $\dim(A)=\dim(G)-1$. Then, by Theorem \ref{t formofHmu}, for all $\mu\in \Lie(A)^*$ the space $\Symp{T^*G}{A}{\mu}$ is symplectomorphic to $T^*\mathbb{R}$. In particular, for every $\mu \in \Lie(A)^*$, the flow of the reduced Hamiltonian $H_\mu$ is Arnold-Liouville integrable with prime integral $H_\mu$. Thus, by the first part of the Corollary, the flow of the normal Hamiltonian is Arnold-Liouville integrable.
%
%
%
%
%
%
%
\end{proof}

As a consequence of Theorem \ref{t formofHmu} we are able to produced several examples of the Hamiltonians that could arise when performing a symplectic reduction of the normal Hamiltonian flow. 
\begin{Cor}
Let $F_1,...,F_k:\mathbb{R}^n\rr \mathbb{R}$ be polynomial maps. Set $\phi:=\sum_1^k F_i^2$. Then, there exists a metabelian Carnot group $G$, an abelian subgroup $A< G$, a co-vector $\mu\in\Lie(A)^*$ and a symplectomorphism $f:\Symp{T^*G}{A}{\mu}\rr T^*\mathbb{R}^n$ for which
\begin{equation}
     H_\mu\circ f^{-1}=||p||^2 + \phi(x), \forall (p,x)\in T^*\mathbb{R}^n,
     \label{e allelectricpotentials}
\end{equation}
where $H_\mu\in C^{\infty}(\Symp{T^*G}{A}{\mu})$ is the reduced normal Hamiltonian and $||\cdot||$ is the norm induced  on $T^*\mathbb{R}^n$ by the euclidean norm of $\mathbb{R}^n$.
\label{c allelectricpotentials}
\end{Cor}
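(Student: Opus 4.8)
The plan is to use the coordinate form of the reduced Hamiltonian to isolate the data we are free to prescribe, and then to build a group realizing any prescribed electric potential. By Corollary~\ref{t formofHmu2} and \eqref{e formofHmuexamples}, once we fix a metabelian Carnot group $G$, an abelian $A\triangleleft G$ containing $[G,G]$ with horizontal generators $Y_1,\dots,Y_{n_1}$, and $\mu\in\Lie(A)^*$, the reduced normal Hamiltonian takes the form
\[
H_\mu(p,x)=\frac12\sum_{i=1}^n\big|p_i+\langle\mu,\mathcal A_i(x)\rangle\big|^2+\frac12\sum_{l=1}^{n_1}\big|\langle\mu,\beta_l(x)\rangle\big|^2 ,
\]
with $\beta_l$ and $\mathcal A_i$ the $\Lie(A)$-valued polynomials \eqref{e defbetal} and \eqref{e defAi}. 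It therefore suffices to arrange that the \emph{magnetic} terms $\langle\mu,\mathcal A_i\rangle$ vanish and that the \emph{electric} terms satisfy $\langle\mu,\beta_l(x)\rangle=G_l(x)$ for prescribed polynomials $G_l$; the overall factor $\tfrac12$ and the precise shape $\|p\|^2+\phi(x)$ will then be recovered by a linear symplectic rescaling.

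The decisive simplification is to let the generators $X_1,\dots,X_n$ projecting to $G/A=\mathbb R^n$ commute, $[X_i,X_j]=0$: then $\Ad_{\prod_{i=1}^{j-1}\exp(-x_iX_i)}X_j=X_j$, so \eqref{e defAi} gives $\mathcal A_i\equiv0$ and the magnetic terms disappear for free. Concretely I would take the semidirect product $G=\mathbb R^n\ltimes N$ of two abelian groups, where $N$ is the simply connected abelian group with
\[
\Lie(N)=\Span\{Y_1,\dots,Y_k\}\ \oplus\ \bigoplus_{l=1}^{k}\bigoplus_{1\le|a|\le s-1}\mathbb R\, Z_{a,l},
\]
$s-1$ exceeding the degrees of all the $F_i$, and where each $X_i$ acts by the nilpotent derivation determined by $\ad_{X_i}Y_l=Z_{e_i,l}$ and $\ad_{X_i}Z_{a,l}=Z_{a+e_i,l}$ (here $e_i$ is the $i$-th unit multi-index and $Z_{a,l}:=0$ once $|a|\ge s$). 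These derivations commute, so $G$ is simply connected nilpotent; assigning $X_i,Y_l$ to the first layer and $Z_{a,l}$ to the layer $|a|+1$ exhibits $G$ as a Carnot group. Setting $A:=N$, the subgroup $A$ is abelian and normal with $\Lie(A)=\Span\{Y_l\}\oplus[\Lie(G),\Lie(G)]$, so $[G,G]\subseteq A$ and $G$ is metabelian. Declaring $X_1,\dots,X_n,Y_1,\dots,Y_k$ an orthonormal frame of the first layer fixes the subRiemannian structure, with $n_1=k$.

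With this group, since the $\ad_{X_i}$ commute and are nilpotent, \eqref{e defbetal} becomes
\[
\beta_l(x)=e^{x_n\ad_{X_n}}\cdots e^{x_1\ad_{X_1}}Y_l=\sum_{|a|\le s-1}\frac{x^a}{a!}\,Z_{a,l},\qquad Z_{0,l}:=Y_l .
\]
Because the $Z_{a,l}$ are distinct basis vectors of $\Lie(A)$, I can choose $\mu\in\Lie(A)^*$ with $\langle\mu,Z_{a,l}\rangle=a!\,c^{(l)}_a$, where $G_l=\sum_a c^{(l)}_a x^a$; then $\langle\mu,\beta_l(x)\rangle=G_l(x)$ for every $l$ simultaneously. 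Taking $G_l(y):=\sqrt2\,F_l(\sqrt2\,y)$ and letting $f$ be the composition of $\varphi_\mu$ from Theorem~\ref{t formofHmu} (followed by the identification $T^*(G/A)\cong T^*\mathbb R^n$) with the linear symplectomorphism $S(p,x)=(2^{-1/2}p,2^{1/2}x)$ of $T^*\mathbb R^n$, one computes $H_\mu\circ f^{-1}(p,x)=\|p\|^2+\sum_{l}F_l(x)^2=\|p\|^2+\phi(x)$, as required.

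The step needing the most care is making $A$ abelian without collapsing the brackets that carry the potential. The commuting-$X$ construction does exactly this: the derived subalgebra is by design the free truncated polynomial module $\bigoplus_l\mathbb R[x_1,\dots,x_n]_{1\le\deg\le s-1}\otimes Y_l$, on which $A$ acts trivially, so that $A$ is abelian, while the $Z_{a,l}$ remain independent basis vectors. That the iterated brackets are well defined and symmetric is automatic here, since any reordering defect $\ad_{[X_i,X_j]}Y_l$ is a bracket of two elements of the abelian algebra $\Lie(A)$, hence zero. The remaining verifications, namely that $G$ is genuinely stratified and that $f$ is a symplectomorphism, are routine, and the factor $\tfrac12$ is disposed of by the rescaling $S$ above, so no essential difficulty remains beyond the construction of $G$.
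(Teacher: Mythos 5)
Your proof is correct and takes essentially the same route as the paper: both constructions build a metabelian Carnot group as a semidirect product of an abelian $\mathbb{R}^n$ (commuting $X_i$, which forces $\mathcal{A}_i\equiv 0$) with an abelian group having one generator for each pair (monomial, index $l$), so that the $\beta_l(x)$ sweep out all monomials up to the needed degree, and then pick $\mu$ by matching coefficients and invoke \eqref{e formofHmuexamples}. The only differences are presentational: the paper realizes the Lie algebra concretely as vector fields $X_i=\partial_{x_i}$, $Y_l=\sum_j x^{I_j}\partial_{\theta_{l,j}}$ on $\mathbb{R}^{kN+n}$ instead of via commuting nilpotent derivations, and your explicit $\sqrt{2}$-rescaling symplectomorphism honestly accounts for the factor $\frac{1}{2}$ in \eqref{e formofHmuexamples}, which the paper's own computation silently drops.
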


\begin{proof}
Denote $m:=\max_{1,...,k}\deg(F_i)$ and let $N$ be the dimension of the vector space $\mathcal{P}_{n,m}$ of polynomials from $\mathbb{R}^n$ to $\mathbb{R}$ of degree less than $m$. For every multindex $I:=(i_1,...,i_n)\in\mathbb{R}^n$ use $x^I$ to denote $x_1^{i_1}\cdot...\cdot x_n^{i_n}$. By definition of $N$ there exists a family of multindexes $\{I_j\}_{j\in\{1,...,N\}}$ such that every polynomial $F_l\in  \mathcal{P}_{n,m}$, with $l\in\{1,...,k\}$, can be written as
\begin{equation}
    F_l(x)=\sum_{j=1}^N c_{l,j}x^{I_j}, \forall x\in\mathbb{R}^n,
    \label{e coefficientsPl}
\end{equation}
for suitable coefficients $c_{l,I_1},..c_{l,I_N}\in\mathbb{R}$. 
Consider $\mathbb{R}^{kN+n}$ with coordinates $\{x_i\}_{i\in\{1,...,n\}}$ and $\{\theta_{l,j}\}_{l\in\{1,...,k\},j\in\{1,...,N\}}$.
Define $G\simeq(\mathbb{R}^{Nk+n},\cdot)$ to be the Lie group with the Lie algebra of left-invariant vector fields generated by 
\begin{eqnarray}
   X_i:= \partial_{x_i}, \text{ for } i\in\{1,...,n\},\\
   Y_l:= \sum_1^N x^{I_j}\partial_{\theta_{l,j}}, \text{ for } l\in\{1,...,k\}.
\end{eqnarray}
It is an easy exercise to show that $\mathfrak{g}:=\Lie(G)$ is a stratified Lie algebra with first layer $\Span\{\{X_i\}_{i\in \{1,...,n\}},\{Y_l\}_{l\in \{1,...,k\}}\}$ and that $\Span\{\{Y_l\}_{l\in \{1,...,k\}}\} \}\oplus[\mathfrak{g},\mathfrak{g}]$ is an abelian sub-algebra. Set $A:=\log(\Span\{\{Y_l\}_{l\in \{1,...,k\}}\} \}\oplus[\mathfrak{g},\mathfrak{g}])$ and fix on $G$ the subRiemannian metric for which $X_1,...,X_m,Y_1,...,Y_k$ is an orthonormal frame of the distribution.
Choosing
\begin{equation}
    \mu:=\sum c_{l,j}\df\theta_{l,j},
\end{equation}
we have by \eqref{e formofHmuexamples} that
\begin{equation}
    H_\mu(p_x,x)=\sum_1^n p_{x_i}^2+\sum_{l=1}^k(\sum_1^N c_{l,j}x^{I_j})^2  ,\forall (p_x,x)\in T^*\mathbb{R}^n,
\end{equation}
which by \eqref{e coefficientsPl} is exactly \eqref{e allelectricpotentials}.
\end{proof}

\section{Normal metric lines}\label{s normal_metric_lines}
In this section we study the presence of metric lines (see Definition \ref{d metric_lines}) in metabelian Carnot groups 
that are semidirect products of two abelian groups. The main idea is to study the flow of the reduced normal Hamiltonian and show that under some hypothesis normal trajectories cannot have a $1$-parameter subgroup as \emph{blow-down}, and, consequently, cannot be metric lines. 


We start this section recalling the only property of metric lines that we will need in the proof of Corollary \ref{c metric_lines}. We use $\delta_h$ to denote the dilation of a factor $h\in \mathbb{R}$, see Definition~\ref{d dilations}.
\begin{Prop}\emph(\cite[Corollary 1.6]{blowups_blowdowns})
    Let $G$ be a Carnot group. Let $\gamma:\mathbb{R}\rr G$ be a metric line. For all $h\in\mathbb{R}$ define 
    \begin{equation*}
        \gamma_{h}:=\delta_{h}\circ\gamma\circ \delta_{\frac{1}{h}}.
    \end{equation*}
    Then there exists a sequence $\{h_n\}_{n\in\mathbb{N}}$, with $\lim_{n\rr\infty}h_n=0$, for which the sequence of curves $\{\gamma_{h_n}\}_{n\in\mathbb{N}}$ converges uniformly on compact sets to a $1$-parameter subgroup parametrized by arc-length as $n\to\infty$.
    \label{l blowdownmetricline}
\end{Prop}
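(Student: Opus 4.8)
The plan is to show that the blow-down operation preserves the class of metric lines, use compactness to extract a limit metric line, and then argue that a suitably chosen limit is forced to be a one-parameter subgroup by a self-similarity principle. First I would normalize $\gamma$ so that $\gamma(0)=1_G$, which is harmless because the metric is left-invariant and $L_{\gamma(0)^{-1}}\circ\gamma$ is again a metric line. Interpreting the rescaling as $\gamma_h(t)=\delta_h\bigl(\gamma(t/h)\bigr)$ (the time axis carries homogeneity one, so $\delta_{1/h}$ acts on the arc-length parameter by multiplication by $1/h$), and using that dilations scale the distance, $d(\delta_h x,\delta_h y)=h\,d(x,y)$, one computes
\[
d(\gamma_h(s),\gamma_h(t))=d\bigl(\delta_h\gamma(s/h),\delta_h\gamma(t/h)\bigr)=h\,d\bigl(\gamma(s/h),\gamma(t/h)\bigr)=h\cdot\bigl|\tfrac{s}{h}-\tfrac{t}{h}\bigr|=|s-t|.
\]
Thus each $\gamma_h$ is itself a metric line through $1_G$: blow-down maps the space of metric lines based at the identity to itself.

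Next I would invoke compactness. The curves $\gamma_h$ are all $1$-Lipschitz (being arc-length metric lines) and all pass through $1_G$ at time $0$, so on each compact interval they remain in a fixed closed metric ball; since Carnot groups with the Carnot--Carathéodory distance are proper, these balls are compact, and Arzelà--Ascoli yields, for any sequence $h_n\to 0$, a subsequence along which $\gamma_{h_n}$ converges uniformly on compact sets to a curve $\gamma_\infty$. As a uniform-on-compacts limit of isometric embeddings, $\gamma_\infty$ is again a metric line through $1_G$. It is convenient to package this as follows: the space $\mathcal{M}$ of metric lines based at $1_G$, with the topology of uniform convergence on compacts, is compact, and the blow-down maps $B_r(\sigma)(t):=\delta_r\bigl(\sigma(t/r)\bigr)$ form a continuous one-parameter group of homeomorphisms of $\mathcal{M}$ with $B_r\circ B_s=B_{rs}$, whose $\alpha$-limit set $\Omega$ as $r\to 0^+$ is nonempty, compact, and $B$-invariant.

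The key structural lemma is that a \emph{self-similar} metric line, i.e.\ one with $B_r\sigma=\sigma$ for all $r>0$, must be a one-parameter subgroup. Indeed, $B_r\sigma=\sigma$ rewrites as $\sigma(rt)=\delta_r\,\sigma(t)$ for all $r>0$, so $\sigma(r)=\delta_r\,\sigma(1)$. Writing $\sigma(1)=\exp(\xi)$ with $\xi=\xi_1+\dots+\xi_s$, $\xi_j\in V_j$, and using $\delta_r\exp(\xi)=\exp\bigl(\sum_{j=1}^s r^j\xi_j\bigr)$, one differentiates this explicit curve and imposes that a geodesic is horizontal, $\dot\sigma(r)\in\Delta_{\sigma(r)}$. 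The Heisenberg computation is the prototype: the component of the left-logarithmic velocity in $V_j$ produced by a homogeneous $V_j$-term is incompatible with horizontality unless $\xi_j=0$ for all $j\ge 2$. Hence $\sigma(r)=\exp(r\xi_1)$ with $\xi_1\in V_1$, a horizontal one-parameter subgroup, and such subgroups are metric lines (they project isometrically onto straight lines in the abelianization, a submetry). So it suffices to produce \emph{one} self-similar blow-down limit and then apply this lemma.

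The hard part, and the genuine content of \cite[Corollary 1.6]{blowups_blowdowns}, is exactly producing a blow-down-invariant limit: a continuous flow on a compact space need not have a fixed point, so the compactness of $\Omega$ alone is not enough and the Carnot structure must be used. Here the set of blow-down limits is $B$-invariant (limits of $\gamma_{rh_n}$ with $rh_n\to 0$ are again blow-down limits, by a diagonal extraction), which would immediately give self-similarity \emph{if} the blow-down limit were unique, but in general it is not. The plan is therefore to iterate blow-down and quantify the higher-layer content: one seeks a functional measuring the $V_{\ge 2}$-part (weighted by the dilation homogeneities $r^j$) that is monotone non-increasing under $B_r$ and bounded below, so that a diagonal argument over scales tending to $0$ extracts a limit on which this content is blow-down-invariant, forcing $\xi_j=0$ for $j\ge2$ and hence a one-parameter subgroup. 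Controlling this monotone quantity and establishing its convergence along a subsequence $h_n\to 0$ is where I expect the main difficulty to lie; once a self-similar limit is in hand, the lemma of the previous paragraph closes the argument.
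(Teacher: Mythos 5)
There is a genuine gap, and you have in fact located it yourself. Your first three steps are sound: after normalizing $\gamma(0)=1_G$, the scaling identity $d(\delta_h x,\delta_h y)=h\,d(x,y)$ does show that each $\gamma_h$ is again a metric line through the identity; properness of the Carnot--Carath\'eodory distance plus Arzel\`a--Ascoli does give subsequential limits that are again metric lines; and the reduction lemma (a blow-down--invariant metric line is a horizontal one-parameter subgroup) is correct --- indeed it can be made cleaner than your ``Heisenberg prototype'' computation: a self-similar metric line satisfies $\sigma(t)=\delta_t\sigma(1)$, its projection to the abelianization is the straight line $t\bar\xi_1$, and since $d\pi$ restricted to the distribution is an isomorphism onto the tangent of the abelianization, a horizontal curve through $1_G$ is the unique horizontal lift of its projection, namely $\exp(t\xi_1)$. (Even here a detail is missing: gluing the two self-similar half-rays $t\ge 0$ and $t\le 0$ into a single one-parameter subgroup requires ruling out a corner at the origin, which relies on the non-minimality of corners --- not on anything you wrote.)

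The problem is that everything after that is a plan, not a proof. The statement you must prove is precisely that \emph{some} blow-down limit is a line, and your argument reduces this to producing a self-similar element of the $\alpha$-limit set $\Omega$ --- an equivalent statement, since one-parameter subgroups are exactly the self-similar metric lines. As you correctly observe, a continuous flow on a compact space need not have a fixed point, so compactness and $B$-invariance of $\Omega$ give nothing; the ``functional measuring the $V_{\ge 2}$-part, monotone under blow-down'' is never defined, its monotonicity is never argued, and it is not clear such a naive quantity exists (the $V_{\ge 2}$-coordinates of a metric line are unbounded in general, so weighting by $r^j$ and extracting a limit is exactly as hard as the original problem). This missing step is the entire content of the cited result: the paper you are working from does not prove this Proposition either --- it quotes it as \cite[Corollary 1.6]{blowups_blowdowns} (Hakavuori--Le Donne), where the hard work is a quantitative monotonicity-and-maximality argument on the horizontal projection of the geodesic, not a soft dynamical argument on the space of blow-downs. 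As written, your proposal establishes the easy reductions surrounding the theorem but leaves the theorem itself unproved.
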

An immediate consequence of Proposition \ref{l blowdownmetricline} is the following: 
\begin{lemma}
Let $G$ be a subRiemannian Carnot group. Let $\gamma:\mathbb{R}\rr G$ be an absolutely continuous curve parametrized by arc-length. Fix a non-trivial vector subspace $V\subseteq G/[G,G]$ and denote with $\pi_1:G/[G,G]\rr V$ the orthogonal projection. Denote with $\pi:G\rr G/[G,G]$ the canonical projection. Define $\sigma:=\pi\circ\gamma$. If $\pi_1\circ\sigma$ is bounded and 
\begin{equation}
    \limsup_{T\rr\infty}\frac{1}{T}\int_0^T \sqrt{1-||(\pi_1\circ\sigma)'(t)||^2}\df t <1, 
    \label{e derivativethetabounded0}
\end{equation}
then $\gamma$ is not a metric line.
\label{l onecomponentbounded}
\end{lemma}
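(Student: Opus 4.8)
The plan is to argue by contradiction using Proposition \ref{l blowdownmetricline}: assuming $\gamma$ is a metric line, I extract a blow-down sequence $\gamma_{h_n}$ converging uniformly on compact sets to a $1$-parameter subgroup $\ell$ parametrized by arc-length, and then show that the hypotheses \eqref{e derivativethetabounded0} and the boundedness of $\pi_1\circ\sigma$ force the speed of $\ell$ in the $V$-direction to be strictly less than $1$, contradicting the fact that $\ell$ is a horizontal $1$-parameter subgroup parametrized by arc-length (whose projection to $G/[G,G]$ must therefore be a straight line traversed at unit speed).

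First I would track what the dilations do to the abelianized, projected curve. Since $\pi:G\to G/[G,G]$ intertwines the dilation $\delta_h$ on $G$ with the linear scaling by $h$ on the first layer $V_1=G/[G,G]$, and since $\pi_1$ is a linear (orthogonal) projection onto $V\subseteq V_1$, one computes that $\pi_1\circ\pi\circ\gamma_{h}(t)=h\,(\pi_1\circ\sigma)(t/h)$. The key quantitative step is then to estimate the $V$-length of this rescaled projected curve over a fixed compact time interval, say $[0,T_0]$: by the fundamental theorem of calculus and the change of variables $s=t/h$,
\begin{equation*}
    \int_0^{T_0}\|(\pi_1\circ\pi\circ\gamma_{h})'(t)\|\,\df t=\int_0^{T_0/h}\|(\pi_1\circ\sigma)'(s)\|\,\df s.
\end{equation*}
As $h=h_n\to 0$ the upper limit $T_0/h_n\to\infty$, so the right-hand side is controlled by the time-average of $\|(\pi_1\circ\sigma)'\|$.

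The analytic heart of the argument is to convert the hypothesis \eqref{e derivativethetabounded0} into an upper bound on this average speed that is bounded away from $1$. Because $\gamma$ is parametrized by arc-length and the metric on $V_1$ is induced so that $\pi$ is a submetry, the horizontal speed satisfies $\|\sigma'(t)\|\le 1$, hence $\|(\pi_1\circ\sigma)'(t)\|\le 1$ and $\sqrt{1-\|(\pi_1\circ\sigma)'(t)\|^2}$ is real; writing $\|(\pi_1\circ\sigma)'\|\le 1-\bigl(1-\sqrt{1-\|(\pi_1\circ\sigma)'\|^2}\bigr)$, or more directly using concavity/convexity to compare $\|(\pi_1\circ\sigma)'\|$ with $1-\tfrac12(1-\|(\pi_1\circ\sigma)'\|^2)$-type expressions, I would extract from \eqref{e derivativethetabounded0} a constant $c<1$ with $\limsup_{T\to\infty}\tfrac1T\int_0^T\|(\pi_1\circ\sigma)'(t)\|\,\df t\le c$. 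Combining with the blow-down computation, the $V$-projection of the limiting $1$-parameter subgroup $\ell$ over $[0,T_0]$ has length at most $c\,T_0<T_0$. On the other hand, an arc-length-parametrized horizontal $1$-parameter subgroup projects under $\pi$ to a straight line run at unit speed (its tangent lies in $V_1$ and has unit norm for all time), so the $V$-component of its velocity is constant; if that constant had norm $<1$ then boundedness of $\pi_1\circ\sigma$—passing to the limit, $\pi_1$ applied to $\ell$ is also bounded—would be consistent only with the $V$-velocity being exactly $0$, and I would need to rule this out or feed it into the contradiction. This interplay between the two hypotheses (boundedness forcing zero $V$-velocity of the limit, while \eqref{e derivativethetabounded0} is what guarantees the limit is a genuine line with a definite structure) is the delicate point.

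The main obstacle I anticipate is the justification that the length functional (or at least the $V$-projected velocity) passes to the blow-down limit correctly: uniform convergence on compact sets of $\gamma_{h_n}$ does not by itself control derivatives, so I cannot naively pass the limit inside the integral of $\|(\pi_1\circ\sigma)'\|$. The clean way around this is to work with lengths rather than pointwise speeds—length is lower semicontinuous under uniform convergence of curves—so that the length of the limiting line's $V$-projection is at most the liminf of the rescaled lengths, which is exactly the quantity bounded by \eqref{e derivativethetabounded0}. Then the contradiction is: the limit $\ell$ is an arc-length $1$-parameter subgroup, so its $V_1$-projection has unit speed, while its $V$-subprojection has average speed $\le c<1$ and is bounded; one checks directly that a unit-speed line in $V_1$ whose $V$-component is bounded must have its $V$-component identically constant of speed strictly less than $1$, forcing the complementary component to carry speed bounded below, which is incompatible with $\ell$ arising as a blow-down of a curve whose \eqref{e derivativethetabounded0}-average already saturates the budget. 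Assembling these comparisons into a strict inequality $1=\|\ell'\|\le c<1$ yields the contradiction and proves $\gamma$ is not a metric line.
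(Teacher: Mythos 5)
Your overall skeleton---contradiction via Proposition \ref{l blowdownmetricline}, boundedness of $\pi_1\circ\sigma$ forcing the $V$-component of the blow-down limit to vanish, and lower semicontinuity of length under uniform convergence to transfer an average-speed bound to the limit line---is exactly the strategy of the paper's proof, and the lower-semicontinuity remark is a correct way to handle the passage to the limit. But the step you call the ``analytic heart'' is false. You claim that from \eqref{e derivativethetabounded0} one can extract $c<1$ with $\limsup_{T\rr\infty}\frac{1}{T}\int_0^T ||(\pi_1\circ\sigma)'(t)||\,\df t\le c$, via the pointwise inequality $||(\pi_1\circ\sigma)'||\le 1-\bigl(1-\sqrt{1-||(\pi_1\circ\sigma)'||^2}\bigr)$. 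That inequality says $||(\pi_1\circ\sigma)'||\le\sqrt{1-||(\pi_1\circ\sigma)'||^2}$, which fails whenever $||(\pi_1\circ\sigma)'||>1/\sqrt{2}$, and the implication itself is false: take $\gamma$ moving back and forth along a fixed horizontal direction lying in $V$, so that $\pi_1\circ\sigma$ oscillates in a bounded set with $||(\pi_1\circ\sigma)'||\equiv 1$; then the left-hand side of \eqref{e derivativethetabounded0} equals $0$, yet the time-average of $||(\pi_1\circ\sigma)'||$ equals $1$. The hypothesis simply does not control the average speed of the $V$-component, and the proof does not need it to.

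What \eqref{e derivativethetabounded0} controls is the \emph{complementary} component. Writing $\pi_2:G/[G,G]\rr V^\perp$ for the orthogonal projection, arc-length parametrization gives $||\sigma'||=1$ a.e.\ (the differential of $\pi$ restricted to the horizontal bundle is an isometry onto the first layer), hence $||(\pi_2\circ\sigma)'||=\sqrt{1-||(\pi_1\circ\sigma)'||^2}$, so the hypothesis says precisely that the asymptotic average speed of $\pi_2\circ\sigma$ is bounded by some $c<1$. It is to this quantity that you should apply your change of variables (note also a missing factor: $\int_0^{T_0}||(\pi_1\circ\pi\circ\gamma_h)'(t)||\,\df t=h\int_0^{T_0/h}||(\pi_1\circ\sigma)'(s)||\,\df s$) and your semicontinuity argument. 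The contradiction then reads: since $\pi_1\circ\sigma_{h_n}(t)=h_n(\pi_1\circ\sigma)(t/h_n)\rr 0$ uniformly on compact sets, the limit line $L(t)=tv$ has $\pi_1(v)=0$, so $\pi_2\circ\sigma_{h_n}\rr L$ and the $V^\perp$-length of $L$ on $[0,T]$ is $T||v||=T$; but by lower semicontinuity this length is at most $\liminf_n h_n\int_0^{T/h_n}||(\pi_2\circ\sigma)'||\,\df s\le cT<T$. Your final paragraph gestures at exactly this, but as written it again attributes the bound $c<1$ to the ``$V$-subprojection,'' which is the wrong component; once you replace $\pi_1$ by $\pi_2$ there, the argument closes and coincides with the paper's proof.
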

\begin{proof}
Assume by contradiction that $\gamma$ is a metric line. For $h\in\mathbb{R}\setminus\{0\}$, define the curve $\sigma_{h}:\mathbb{R}\rr G/[G,G]$ setting $\sigma_{h}(t):=h\sigma(\frac{t}{h})$ for all $t\in \mathbb{R}$.
By Proposition \ref{l blowdownmetricline} we know there exists a sequence $\{h_n\}_{n\in\mathbb{N}}$, with $\lim_{n\rr\infty}h_n=0$, such that $\gamma_{h_n}$ converges uniformly on compact sets to a $1$-parameter subgroup parametrized by arc-length. In particular, there exists $v\in G/[G,G]$, with $||v||=1$, such that the $\sigma_{h_n}$ converges uniformly on compact sets to the line $L:\mathbb{R}\rr G/[G,G]$, $L(t):=tv$ for all $t\in\mathbb{R}$. Since $\pi_1\circ\sigma$ is bounded we have $\lim_{n\rr \infty}\pi_1\circ\sigma_{h_n}(t)=0$ for all $t\in\mathbb{R}$. Consequently, if $\pi_2:G/[G,G]\rr V^\perp\subseteq G/[G,G]$ is the orthogonal projection, $\pi_2\circ\sigma_{h_n}$ converges to $L$ uniformly on compact sets and therefore, for all $T\in \mathbb{R}$, we have
\begin{equation}
    1=\frac{1}{T}\int_0^T ||v||\df t=\lim_{n\rr\infty}\frac{1}{T}\int_0^T ||(\pi_2\circ\sigma_{h_n})'(t)||\df t.
    \label{e metline1}
\end{equation}
On the other hand, being $\gamma$ parametrized by arclength we have $||\pi_2\circ \sigma'||=\sqrt{1-||\pi_1\circ \sigma'||^2}$.
Therefore, \eqref{e derivativethetabounded0} implies
\begin{equation}
    \limsup_{T\rr\infty}\frac{1}{T}\int_0^T ||(\pi_2\circ\sigma)'(t)||\df t <1.
    \label{e derivativethetabounded}
\end{equation}
Consequently,
\begin{eqnarray*}
    \lim_{n\rr\infty}\frac{1}{T}\int_0^T ||(\pi_2\circ\sigma_{h_n})'(t)||\df t=\lim_{n\rr\infty}\frac{1}{T}\int_0^T \bigg|\bigg|(\pi_2\circ\sigma)'\left(\frac{t}{h_n}\right)\bigg|\bigg|\df t\\
    =\lim_{n\rr\infty}\frac{h_n}{T}\int_0^{\frac{T}{h_n}} ||(\pi_2\circ\sigma)'(t)||\df t
    \stackrel{\eqref{e derivativethetabounded}}{<}1.
\end{eqnarray*}
The latter equation implies that \eqref{e metline1} doesn't hold, therefore we reached a contradiction. We showed that if $\pi_1\circ\sigma $ is bounded and \eqref{e derivativethetabounded} holds then $\gamma$ cannot be a metric line, thus the proof of the lemma is concluded.
\end{proof}
We state now a technical lemma that gives us a tool to verify the hypothesis in Lemma~\ref{l onecomponentbounded} using Hamilton equations for some reduced normal Hamiltonians.
\begin{lemma}
Fix a scalar product on $\mathbb{R}^n$ and call $||\cdot||$ the induced norm. Let $V\in C^\infty(\mathbb{R}^n,\mathbb{R})$ be a positive smooth function. 
Fix a compact set $\Omega\subseteq \mathbb{R}^n$ such that $\frac{1}{2}$ is a regular value of $ V|_U$ for some open set $U\subseteq \mathbb{R}^n$ for which $\Omega\subseteq U$. Define $H\in C^\infty(T^*\mathbb{R}^n)$ setting 
\begin{equation}
    H(p,x):=\sum_1^n p_i^2+V(x), \ \ \forall (p,x)\in T^*\mathbb{R}^n.
\end{equation}
Fix $x_0\in \Omega$, with $V(x_0)<\frac{1}{2}$, and choose $p_0\in \mathbb{R}^n$ such that $H(p_0,x_0)=\frac{1}{2}$. Define the curve $(p,x):[0,+\infty)\rr \mathbb{R}^n$ setting
\begin{equation}
    (p(t),x(t)):=\phi_H^t(p_0,x_0),\ \ \forall t\geq 0,
    \label{e defxhill}
\end{equation}
where we denote with $\phi_H^t$ the flow of the Hamiltonian $H$, see \eqref{e def_flow}.
Assume $x(t)\in \Omega$ for all $t\geq 0$. Then 
\begin{equation}
    \limsup_{T\rr\infty} \frac{1}{T}\int_0^T\sqrt{1-||\dot{x}(t)||^2}\df t<1.
    \label{e avaregeofxdot}
\end{equation}
\label{l hill_region}
\end{lemma}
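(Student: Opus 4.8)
The plan is to use conservation of energy to reduce the claim to a lower bound on the time-averaged kinetic energy, and then to extract that bound from a virial (second-derivative) identity together with the regular-value hypothesis.

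First I would exploit that $H$ is constant along its own flow: from $\sum_1^n p_i(t)^2 = \frac12 - V(x(t))$ I read off the Hill-region constraint $V(x(t))\le\frac12$ for all $t\ge0$, and, via the Hamilton equations $\dot x_i = 2p_i$, that $\|\dot x(t)\|^2$ is a fixed positive multiple of the kinetic energy $\frac12 - V(x(t))$. Using the elementary bound $\sqrt{1-u}\le 1-\tfrac{u}{2}$ for $u\in[0,1]$, it is then enough to show that the time-averaged kinetic energy is bounded away from $0$, i.e. that
\[
  \kappa:=\liminf_{T\rr\infty}\frac1T\int_0^T\Big(\tfrac12 - V(x(t))\Big)\df t \;>\;0,
\]
because this gives $\limsup_T\frac1T\int_0^T\sqrt{1-\|\dot x\|^2}\,\df t\le 1 - c\kappa<1$ for a suitable constant $c>0$, which is exactly \eqref{e avaregeofxdot}.

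Since $\frac12$ is a regular value of $V|_U$ and $\Omega\subseteq U$ is compact, the compact set $\{V=\frac12\}\cap\Omega$ carries a nowhere-vanishing $\nabla V$, so by continuity there are $\delta>0$ and $\epsilon_0>0$ with $\|\nabla V(x)\|\ge\delta$ on the shell $S:=\{x\in\Omega:\ \frac12-\epsilon_0\le V(x)\le\frac12\}$. Now set $g(t):=V(x(t))$; differentiating twice and using $\ddot x=-2\nabla V(x)$ gives $\ddot g=\dot x^{\top}\Hess V(x)\,\dot x - 2\|\nabla V(x)\|^2$. Integrating over $[0,T]$ and dividing by $T$, the boundary term $\frac1T(\dot g(T)-\dot g(0))$ tends to $0$ (it is bounded because $x(t)\in\Omega$ and $\|\dot x\|\le1$), so that
\[
  \frac1T\int_0^T\|\nabla V\|^2\,\df t \;\le\; C\,\frac1T\int_0^T\Big(\tfrac12 - V\Big)\df t + o(1),
\]
where $C$ depends only on $\sup_\Omega\|\Hess V\|$ and the scalar product, and I used $|\dot x^{\top}\Hess V\,\dot x|\le C(\tfrac12 - V)$ together with the energy relation. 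If $\kappa=0$, then along some $T_k\rr\infty$ the right-hand side tends to $0$, forcing $\frac1{T_k}\int_0^{T_k}\|\nabla V\|^2\df t\rr0$; but $\frac1{T_k}\int_0^{T_k}(\tfrac12 - V)\rr0$ makes the density of time spent outside $S$ (where $\tfrac12 - V>\epsilon_0$) tend to $0$, hence the density of time spent in $S$ tends to $1$, and the bound $\|\nabla V\|\ge\delta$ on $S$ yields $\liminf_k\frac1{T_k}\int_0^{T_k}\|\nabla V\|^2\df t\ge\delta^2>0$, a contradiction. Thus $\kappa>0$ and the lemma follows.

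I expect the virial step to be the main obstacle: one must verify that the integrated identity really has a vanishing time-averaged boundary term, and then correctly turn \emph{vanishing average kinetic energy} into \emph{full-density time in the shell $S$} before the regular-value gradient bound can be invoked. This is precisely where the hypothesis that $\frac12$ be a regular value is indispensable — without it the trajectory could approach a critical level of $V$ with $\dot x\to0$ and linger there, making the average in \eqref{e avaregeofxdot} equal to $1$.
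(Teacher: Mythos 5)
Your proof is correct, but it takes a genuinely different route from the paper's. Both arguments rest on the same three ingredients: energy conservation (so that, up to a fixed constant, $\|\dot{x}(t)\|^2=\tfrac12-V(x(t))$ and the trajectory stays in $\{V\le\tfrac12\}$), the compactness/regular-value bound $\|\nabla V\|\ge\delta$ on a shell $\{\tfrac12-\epsilon\le V\le\tfrac12\}\cap\Omega$, and the second-derivative identity $\tfrac{\df^2}{\df t^2}V(x(t))=\dot{x}^{\top}\Hess V(x)\,\dot{x}-c\,\|\nabla V(x)\|^2$. The difference is in how these are deployed. The paper argues \emph{locally}: it decomposes $[0,+\infty)$ into intervals on which the trajectory either stays away from the shell (so the kinetic energy is bounded below pointwise) or makes an excursion into it; on an excursion, the concavity bound $\tfrac{\df^2}{\df t^2}V(x(t))\le-\delta$ caps the excursion duration by $C\sqrt{\epsilon}/\delta$, while the entry segment from level $\tfrac12-\epsilon$ to $\tfrac12-\tfrac{\epsilon}{2}$ lasts at least $\sqrt{\epsilon}/(2C)$, so a definite fraction of each excursion carries kinetic energy $\ge\tfrac{\epsilon}{2}$; this yields a uniform per-interval bound $1-C_\epsilon$, which is then summed over an explicit sequence of times $t_i$. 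You argue \emph{globally}: after reducing the claim, via $\sqrt{1-u}\le 1-\tfrac{u}{2}$, to a positive lower bound on the liminf of the time-averaged kinetic energy, you integrate the identity over $[0,T]$ (a virial argument whose boundary term vanishes after dividing by $T$) to control the time average of $\|\nabla V\|^2$ by the time average of the kinetic energy, and then close by contradiction through a Chebyshev estimate: vanishing average kinetic energy forces asymptotically full time-density in the shell, where $\|\nabla V\|^2\ge\delta^2$, contradicting the virial bound. Your version is shorter and dispenses entirely with the paper's interval bookkeeping (the construction of the $t_i$ and the two-case analysis); what it gives up is quantitative content, since the paper's proof produces an explicit constant $C_\epsilon$ valid on every window of the decomposition, whereas your contradiction argument yields only the qualitative strict inequality. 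One caveat you inherit from the statement itself: with $H=\sum p_i^2+V$, Hamilton's equations literally give $\dot{x}=2p$, hence $\|\dot{x}\|^2=4(\tfrac12-V)$, which can exceed $1$; both you and the paper tacitly use the normalization $\|\dot{x}\|^2=\tfrac12-V$, the only one under which the integrand in the conclusion is defined, so this does not affect the validity of either argument.
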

Since the proof of Lemma \ref{l hill_region} is not helpful in understanding the statement of the lemma, nor contains any idea coming from subRiemannian geometry, we prefer to postpone it to Appendix \ref{appendix_proof_Hill_region}. We are finally ready to prove Corollary \ref{c metric_lines}. 

\begin{proof}[Proof of Corollary \ref{c metric_lines}]
Being $(\Lie(A)\cap V_1)^\perp$ an abelian sub-algebra, the functions $\mathcal{A}_i$, defined by \eqref{e defAi} for all $i=1,...,n$, are constantly $0$. Let $\lambda:\mathbb{R}\rr T^*G$ be a normal extremal with momentum $\mu\in \Lie(A)^*$. Set
\begin{equation*}
    \Tilde{H}_\mu(p,x):=\sum_1^n p_i^2+V_\mu(x), \forall (p,x)\in T^*\mathbb{R}^n. 
\end{equation*}
Write $(p_\theta, p_x,\theta,x):=T^*\Phi (\lambda)$. By Corollary \ref{t formofHmu2} we have that the couple $(p_x,x)$ solves the Hamilton equations for  $\Tilde{H}_\mu$. Assume first that condition (1) holds. We have that $x$ is bounded (since $x(t)$ is in the compact set $\Omega$ for all $t\in\mathbb{R}$) and that \eqref{e avaregeofxdot} holds by Lemma \ref{l hill_region}. Consequently, we can apply Lemma \ref{l onecomponentbounded} with $V=\Bar{\pi}(\exp((\Lie(A)\cap V_1)^\perp))$, where $\Bar{\pi}:G\rr G/[G,G]$ is the canonical projection, and conclude that the normal trajectory associated to $\lambda$ is not a metric line. \\
Assume instead that condition (2) holds. By \eqref{e liftequation_coordinates} we have $\lim_{t\rr\infty}\dot{\theta_i}(t)\neq \lim_{t\rr-\infty}\dot{\theta_i}(t)$. In particular, there cannot exist a sequence $\{h_j\}$ decreasing to $0$ such that $h_j\theta_i(\frac{\cdot}{h_j})$ converges uniformely on compact sets to a linear function. As a consequence, by Proposition~\ref{l blowdownmetricline}, the normal trajectory associated to $\lambda$ is not a metric line.
\end{proof}

\section{Cut-times}\label{s cut}
In the section we improve the result in Corollary \ref{c metric_lines}, showing that for some particular normal trajectories we can give an explicit bound on the time at which they stop being length-minimizing.
In particular, we prove that if a normal extremal is the lift of an $L$-periodic curve, with $L\in\mathbb{R}$, then under some conditions the associated normal trajectory stops to be length-minimizing before time $L$. The first key idea we need is contained in the following theorem. Recall that we denote with $t_{\text{cut}}(\gamma,0)$ the supremum of times $t$ for which $\gamma|_{[0,t]}$ is length-minimizing.
\begin{Theorem}
 Let $G$ be a subRiemannian Lie group. Let $\gamma:\mathbb{R}\rr G$ be a normal trajectory with $\gamma(0)=1_G$. Assume there exists $L>0$ such that 
 \begin{equation}
 \df R_{\gamma(L)}\dot{\gamma}(0)=\dot{\gamma}(L).   
 \label{e condition_cut_time}
 \end{equation}
 Then either $t_{\text{cut}}(\gamma,0)\leq L$ or $\gamma$ is a $1$-parameter subgroup.
 \label{t cut_time}
\end{Theorem}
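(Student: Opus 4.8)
The plan is to argue by contraposition: assuming that $\gamma$ is not a one-parameter subgroup, I would show that $\gamma$ fails to be length-minimizing on every interval $[0,L+\epsilon]$ with $\epsilon>0$, which is exactly $t_{\text{cut}}(\gamma,0)\le L$. After rescaling I may assume $\gamma$ is parametrized by arc length, so that $\gamma|_{[0,L]}$ minimizing gives $d(1_G,\gamma(L))=L$. The geometric principle I would exploit is standard: if a geodesic is length-minimizing strictly beyond a point, then up to that point it is the \emph{unique} minimizer and is smooth; hence exhibiting either a second distinct minimizer of the same length, or a corner in a curve of the same length, contradicts minimality past time $L$.

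The central construction uses left-invariance, the only isometric symmetry available. Set $V:=\dot\gamma(0)\in\Lie(G)$ and read \eqref{e condition_cut_time} as the statement that the \emph{right}-invariant extension $V^\dagger$ is tangent to $\gamma$ at time $L$, i.e. $\dot\gamma(L)=V^\dagger(\gamma(L))=\df R_{\gamma(L)}V$. Equivalently, the curve $\beta(s):=\gamma(L)^{-1}\gamma(s+L)$ is again a normal extremal (left-invariance) with $\beta(0)=1_G$ and $\dot\beta(0)=\Ad_{\gamma(L)^{-1}}V$. I would then compare $\gamma$ with the admissible competitor obtained by concatenating $\gamma|_{[0,L]}$ with the left-translated copy $s\mapsto\gamma(L)\gamma(s)$: both pieces are horizontal of the same speed, giving a length-$2L$ path from $1_G$ to $\gamma(L)^2$ whose junction velocities are $\df R_{\gamma(L)}V$ on the left and $\df L_{\gamma(L)}V$ on the right. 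This yields a dichotomy governed by whether $\Ad_{\gamma(L)}V=V$ (the identity that automatically holds when $\gamma(s)=\exp(sV)$). If it holds, the copies glue in a $C^1$ fashion, the identity $\gamma(s+L)=\gamma(L)\gamma(s)$ propagates, and together with smoothness of $\gamma$ this should force $\gamma$ to be a one-parameter subgroup. If it fails, the competitor has a genuine corner and can be strictly shortened, and I would transport this shortcut back along $\gamma$ by a left translation to contradict the minimality of $\gamma$ beyond time $L$.

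The main obstacle is precisely reconciling the two natural continuations of $\gamma$ at time $L$: condition \eqref{e condition_cut_time} matches velocities exactly only for the \emph{right} translate $s\mapsto\gamma(s)\gamma(L)$, which is \emph{not} horizontal because right translations do not preserve the sub-Riemannian structure, whereas the \emph{left} translate is horizontal and length-preserving but meets $\gamma$ with a corner measured by $\Ad_{\gamma(L)}$. Turning the single-time infinitesimal identity \eqref{e condition_cut_time} into a global geometric conclusion, and in particular relating the shortcut toward $\gamma(L)^2$ back to the minimality of $\gamma$ toward $\gamma(L+\epsilon)$, is the delicate point. It is further compounded by the fact that in sub-Riemannian geometry the velocity $\dot\gamma(L)$ does not determine the covector $\lambda(L)$, so one cannot simply invoke uniqueness of the Hamiltonian flow; the argument must be run at the level of lengths and minimizers rather than of extremals.
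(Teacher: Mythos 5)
Your dichotomy has a genuine gap, and one of its two branches is in fact false. Consider the Heisenberg group and a normal geodesic $\gamma$ through $1_G$ whose horizontal projection is a circle of period $L$. Then $\gamma(L)$ is central, so $\Ad_{\gamma(L)}\dot{\gamma}(0)=\dot{\gamma}(0)$, condition \eqref{e condition_cut_time} holds, the left-translated continuation glues in a $C^1$ fashion, and the propagation identity $\gamma(s+L)=\gamma(L)\gamma(s)$ holds exactly --- yet $\gamma$ is \emph{not} a one-parameter subgroup. So the implication you rely on in the ``no corner'' branch ($\Ad$-invariance plus propagation forces a one-parameter subgroup) is false. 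For such curves the theorem is true only because $t_{\text{cut}}(\gamma,0)\le L$, and your only mechanism for bounding the cut time (shortening a corner) is unavailable precisely in this branch. This case genuinely requires a conjugate-point argument, which is what the paper supplies: it sets $J(t):=\dot{\gamma}(t)-\df R_{\gamma(t)}\dot{\gamma}(0)$, notes that $\dot{\gamma}$ is a Jacobi field and that the right-invariant extension of $\dot{\gamma}(0)$ is a Killing field of the left-invariant metric (hence Jacobi along $\gamma$), so that $J$ is a Jacobi field vanishing at $0$ and at $L$ by \eqref{e condition_cut_time}; if $\gamma$ is not a one-parameter subgroup then $J\not\equiv 0$, and loss of minimality after time $L$ follows from the sub-Riemannian conjugate-point theory cited in the proof.

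The ``corner'' branch also does not close. Your competitor, the concatenation of $\gamma|_{[0,L]}$ with $s\mapsto\gamma(L)\gamma(s)$, ends at $\gamma(L)\gamma(\epsilon)$, whereas minimality of $\gamma$ on $[0,L+\epsilon]$ is a statement about the distance to $\gamma(L+\epsilon)$; these are different points exactly when this branch is active. Shortening the cornered path therefore bounds $d(1_G,\gamma(L)\gamma(\epsilon))$, not $d(1_G,\gamma(L+\epsilon))$, and no left translation can repair this: a left translation moves both endpoints, so it cannot turn a curve from $1_G$ to $\gamma(L)\gamma(\epsilon)$ into a curve from $1_G$ to $\gamma(L+\epsilon)$. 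The root of both failures is the obstacle you flag yourself: hypothesis \eqref{e condition_cut_time} is phrased through right translation, which is not an isometry of a left-invariant sub-Riemannian structure, so it cannot be exploited through finite symmetries and length comparisons; its natural use is infinitesimal, as the restriction of a Killing (right-invariant) vector field along $\gamma$, i.e.\ through Jacobi fields, which is the paper's route.
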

\begin{proof}
We show that if $\gamma$ is not a $1$-parameter subgroup, there exists a non constantly zero Jacobi vector field $J:\mathbb{R}\rr TG$ such that $J(0)=0$ and $J(L)=0$. Then the theorem will follow from \cite[Theorem 8.61, Proposition 15.6]{agrachev}. We refer to \cite[Chapter 15]{agrachev} for an extended presentation of Jacobi vector fields.
Set 
\begin{equation}
    J(t):=\dot{\gamma}(t)-\df R_{\gamma(t)}\dot{\gamma}(0).
\end{equation}
The vector field $J$ is a Jacobi vector field since it is the difference between two Jacobi vector fields: it is well known that $\dot{\gamma}(t)$ is a Jacobi vector field, and $R_{\gamma(t)}\dot{\gamma}(0)$ is a Jacobi vector field being a Killing vector field (see 
\cite[Lemma 5.15]{agrachev}). By \eqref{e condition_cut_time} we have $J(0)=0$ and $J(L)=0$. Moreover, $J$ is constantly $0$ if and only if 
\begin{equation*}
    \dot{\gamma}(t)=\df R_{\gamma(t)}\dot{\gamma}(0), \forall t\in \mathbb{R},
\end{equation*}
thus if and only if $\gamma$ is a $1$-parameter subgroup.
\end{proof}

Using the above theorem and the discussion we had in Section \ref{s computation_lift}, we are able to study the cut-time of normal extremals that project to periodic curves.
\begin{Prop}
Let $G$ be a metabelian simply connected nilpotent Lie group with left-invariant distribution $\Delta$ and left-invariant Riemannian metric. Let $A\triangleleft G$ be an abelian subgroup with $[G,G]\subseteq A$. Let $\gamma:\mathbb{R}\rr G$ be a normal trajectory and let $\lambda:\mathbb{R}\rr T^*G$ be the corresponding normal extremal. Assume $\gamma(0)=1_G$. Denote with $\mu\in \Lie(A)^*$ the momentum of $\lambda$. Call $\Pi_\mu:T^*G\rr \Symp{T^*G}{A}{\mu}$ 
the canonical projection. If $\Pi_\mu\circ \lambda$ is $L$-periodic and either
\begin{equation}
  \lambda(0)(X)=0, \ \ \forall X\in (\Lie(A)\cap \Delta)^\perp,
  \label{e condition_velocity_tangent_to_A}
\end{equation}
or $(\Lie(A)\cap \Delta)^\perp$ is an abelian sub-algebra,
then $t_\text{cut}(\gamma,0)\leq L$.
\label{c cut_time_reduction}
\end{Prop}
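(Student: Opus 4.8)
The plan is to reduce the statement to Theorem \ref{t cut_time}, whose hypothesis \eqref{e condition_cut_time} asks for $\df R_{\gamma(L)}\dot\gamma(0)=\dot\gamma(L)$. First I would work in the exponential coordinates of the second type of Section \ref{s computation_lift}, setting $(p_\theta,p_x,\theta,x):=T^*\Phi(\lambda)$. Since $\varphi_\mu$ identifies $\Symp{T^*G}{A}{\mu}$ with $T^*(G/A)$ and the reduced curve is exactly $(x,p_x)$, the $L$-periodicity of $\Pi_\mu\circ\lambda$ means $(x,p_x)$ is $L$-periodic; as $\gamma(0)=1_G$ forces $x(0)=0$, I get $x(L)=0$ and $p_x(L)=p_x(0)$. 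From $x(L)=0$ and \eqref{e coordinatesofsecondtype} it follows that $\gamma(L)=\prod_l\exp(\theta_l(L)Y_l)\in A$, so the trajectory returns to $A$ at time $L$. Writing $\dot\gamma(t)=\df L_{\gamma(t)}W(t)$ with $W(t)=\sum_i\langle\lambda(t),X_i\rangle X_i+\sum_l\langle\lambda(t),Y_l\rangle Y_l$, and using $\langle\lambda,X_i\rangle=p_{x_i}+\langle\mu,\mathcal A_i(x)\rangle$, $\langle\lambda,Y_l\rangle=\langle\mu,\beta_l(x)\rangle$ evaluated at $t=0,L$ (where $x=0$, so $\mathcal A_i(0)=0$ and $\beta_l(0)=Y_l$), I find $W(0)=W(L)=:W$. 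Hence \eqref{e condition_cut_time} becomes the algebraic identity $\Ad_{\gamma(L)}W=W$.

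Under the first alternative, \eqref{e condition_velocity_tangent_to_A} gives $\langle\lambda(0),X_i\rangle=0$, hence $p_{x_i}(0)=0$ and, by the computation above, $\langle\lambda(L),X_i\rangle=0$ as well; thus $\dot\gamma(0)$ and $\dot\gamma(L)$ are both tangent to $A$. I would then apply Remark \ref{r rmk_on_lift} with $t_1=0$, $t_2=L$ to obtain \eqref{e condition_cut_time} (equivalently $W\in\Lie(A)$, so $\Ad_{\gamma(L)}W=W$ because $A$ is abelian) and conclude with Theorem \ref{t cut_time}. The one-parameter-subgroup alternative in that theorem is harmless: a genuinely periodic, non-constant reduced curve projects to a non-constant periodic curve in $G/A$, which no one-parameter subgroup can be.

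The second alternative is where I expect the real difficulty. When $\mathfrak b:=(\Lie(A)\cap\Delta)^\perp$ is abelian the $\mathcal A_i$ vanish, and since both $\mathfrak b$ and $\Lie(A)$ are abelian the adjoint series truncates to $\Ad_{\gamma(L)}W-W=[Z,W_{\mathfrak b}]$, where $Z:=\log\gamma(L)\in\Lie(A)$ and $W_{\mathfrak b}=\sum_i p_{x_i}(0)X_i$ is the $\mathfrak b$-component of the velocity. So \eqref{e condition_cut_time} is equivalent to $[Z,W_{\mathfrak b}]=0$, and this fails in general: already in the Engel group the horizontal drift $\theta_1(L)=\int_0^L\langle\mu,\beta_1(x)\rangle\,\df t$ is nonzero for suitable $\mu$, so the cut at $L$ is not a conjugate point and Theorem \ref{t cut_time} cannot be invoked at time $L$ directly. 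The route I would take instead is to realize $\gamma(L)$ as a \emph{Maxwell} point. Since $\mathfrak b$ is abelian, the Lie-algebra automorphism determined by $-\id$ on $\mathfrak b$ and $\id$ on $\Lie(A)\cap\Delta$ integrates to an isometry $\psi$ of $G$ fixing $1_G$ and reversing $W_{\mathfrak b}$. If the $L$-periodicity forces $\psi$ to fix $\gamma(L)$ as well, then $\gamma$ and $\psi\circ\gamma$ are two distinct geodesics of equal length joining $1_G$ to $\gamma(L)$, whence $\gamma$ cannot be minimizing past $L$ and $t_{\text{cut}}(\gamma,0)\le L$.

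The crux, and the step I expect to be hardest, is precisely to show $\psi(\gamma(L))=\gamma(L)$, i.e. that the components of $\log\gamma(L)$ on which $\psi$ acts by $-1$ vanish after one period. This holds when the reduced orbit is symmetric about its value at $t=0$, but a general periodic orbit need not be, so I anticipate having to combine $\psi$ with a time-reflection of the reduced motion about a turning point — a time where $p_x=0$, hence $\dot\gamma$ is tangent to $A$ and Remark \ref{r rmk_on_lift} again applies. Controlling the resulting Maxwell (or conjugate) time and proving it does not exceed the full period $L$ is the delicate point; everything else reduces to routine bookkeeping with the coordinate formulas of Section \ref{s computation_lift}.
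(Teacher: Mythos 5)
Your handling of alternative \eqref{e condition_velocity_tangent_to_A} is essentially the paper's own argument (periodicity plus $x(0)=0$ gives $\gamma(L)\in A$ and $\dot\gamma(L)\in T_{\gamma(L)}A$, then Remark \ref{r rmk_on_lift} and Theorem \ref{t cut_time}), and your reduction of \eqref{e condition_cut_time} to $[Z,W_{\mathfrak{b}}]=0$ correctly explains why that theorem cannot be invoked for the second alternative. The gap is exactly where you flag it, and it is fatal to the route you propose. First, the isometry $\psi$ need not exist in the stated generality: writing $\tau:=\psi_*|_{\Lie(A)}$, the automorphism property forces $\tau\circ\ad_b=-\ad_b\circ\tau$ for every $b\in\mathfrak{b}$, which is incompatible with $\tau=\id$ on $\Lie(A)\cap\Delta$ whenever some $\ad_b$ maps $\Lie(A)\cap\Delta$ into itself nontrivially. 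Concretely, on the Engel algebra $[X,Y]=Z$, $[X,Z]=W$ take $\Delta_1=\Span\{X,Y,Z\}$ (bracket generating), $A=\exp(\Span\{Y,Z,W\})$, $\mathfrak{b}=\Span\{X\}$: then $\tau(Z)$ would have to equal both $Z$ (since $Z\in\Lie(A)\cap\Delta$) and $-Z$ (since $Z=[X,Y]$). Your $\psi$ does exist on Carnot groups, by letting it act as $(-1)^{k-1}$ on $V_k\cap\Lie(A)$, but the proposition is not restricted to Carnot groups. Second, even when $\psi$ exists, the identity $\psi(\gamma(L))=\gamma(L)$ is false for generic periodic reduced orbits, as you yourself suspect; and the repair you suggest, a time-reflection about an instant where $p_x=0$, is not available either, because when $\dim(G/A)\geq 2$ a periodic reduced orbit need not pass through any point where all components of $p_x$ vanish (think of a circular orbit of the reduced Hamiltonian \eqref{e reduced_Hamiltonian_Engel}). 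So the proposal does not establish the second alternative.

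The idea you are missing is that the second geodesic should be produced inside the reduction, not by a symmetry of $G$. Since $\mathfrak{b}$ is abelian the maps $\mathcal{A}_i$ vanish, so $\Tilde{H}_\mu(p,x)=\sum_i p_i^2+\frac12\sum_l\langle\mu,\beta_l(x)\rangle^2$ is even in $p$, and $(\Tilde{p}_x,\Tilde{x})(t):=(-p_x(-t),x(-t))$ solves the Hamilton equations for the \emph{same} $\Tilde{H}_\mu$. Lift this curve, with the \emph{same} momentum $\mu$, through the covector over $1_G$ whose reduced data is $(-p_x(0),0)$; call $\Tilde\gamma$ the resulting normal trajectory, which differs from $\gamma$ as soon as $p_x(0)\neq 0$ (if $p_x(0)=0$ you are back in the first alternative). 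Then $\gamma(L)=\Tilde\gamma(L)$ requires no symmetry of the orbit: $\Tilde{x}(L)=x(-L)=x(L)$ by $L$-periodicity, and by the lift equation \eqref{e liftequation_coordinates} (with $\mathcal{A}\equiv 0$) the vertical displacements are $\theta_i(L)=\int_0^L F_i(t)\,\df t$ and $\Tilde\theta_i(L)=\int_0^L F_i(-t)\,\df t$, where $F_i=\sum_{l=1}^{n_1}\langle\mu,\beta_l(x)\rangle\,\df\theta_i(\Phi^*\beta_l(x))$ depends on $x$ alone and is therefore $L$-periodic in $t$; integrating a periodic function backwards over a full period gives the same value. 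Hence $1_G$ and $\gamma(L)$ are joined by two distinct normal geodesics of equal length, so $\gamma$ stops minimizing at or before $L$, i.e.\ $t_{\text{cut}}(\gamma,0)\le L$. This ``integral over a full period'' identity is the entire content of the paper's second case, and it is precisely the step your outline leaves open.
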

\begin{proof}
Assume first that condition \eqref{e condition_velocity_tangent_to_A} holds. 
Then there holds $\dot{\gamma}(0)\in T_{\gamma(0)}A$. Since $\Pi_\mu\circ \lambda$ is $L$-periodic we have also $\gamma(L)\in A$ and $\dot{\gamma}(L)\in T_{\gamma(L)}A$.
By Remark \ref{r rmk_on_lift} we have that \eqref{e condition_cut_time} holds, thus we can apply Theorem \ref{t cut_time} and conclude the proof of the first part of the proposition.\\
Assume now that $(\Lie(A)\cap \Delta)^\perp$ is an abelian sub-algebra.
Fix a base $Y_1,...,Y_m$ of $\Lie(A)$ with $Y_1,...,Y_{n_1}$ orthonormal base of $\Delta$, and complete $Y_1,...,Y_{n_1}$ to an orthonormal base $Y_1,...,Y_{n_1},X_1,...,X_n$ of the distribution. Define exponential coordinates of second type $\Phi:\mathbb{R}^{m+n}\rr G$ as in \eqref{e coordinatesofsecondtype}. Call $\Bar{\Phi}:\mathbb{R}^n\rr G/A$ the coordinates induced on the quotient, see \eqref{e coord_quotient}. Remark that being $\Span\{X_1,...,X_n\} $ abelian by assumption, equation \eqref{e defAi} implies that the $\mathcal{A}_i$'s appearing in equation \eqref{e formofHmu} are constantly zero. Thus \eqref{e formofHmu} rewrites as
\begin{equation*}
    \Tilde{H}_\mu(p_x,x)=\sum_1^{n}p_{x_i}^2+\frac{1}{2} \sum_1^{n_1}|\langle\mu ,\beta_l(x)\rangle|^2,\forall (p_x,x)\in T^*\mathbb{R}^{n},
\end{equation*}
where the $\beta_l$'s are defined from \eqref{e defbetal}.\\
Call $(p_x,x):\mathbb{R}\rr T^*\mathbb{R}^{n}$ the function $(p_x,x):=T^*\Bar{\Phi}\circ\varphi_\mu\circ \Pi_\mu(\lambda)$. Being $\Tilde{H}_\mu(p,y)=\Tilde{H}_\mu(-p,y)$ for all $(p,y)\in T^*\mathbb{R}^n$, and since $(p_x,x)$ solves the Hamilton equations for $\Tilde{H}_\mu$, also the curve $(\Tilde{p}_x,\Tilde{x}):\mathbb{R}\rr T^*\mathbb{R}^n$, defined by $(\Tilde{p}_x,\Tilde{x})(t):=(-p_x(-t),x(-t))$ for all $t\in\mathbb{R}$, solves the Hamilton equation for $\Tilde{H}_\mu$.
Call $\Tilde{\lambda}$ the lift of $T^*\Bar{\Phi}^{-1}(\Tilde{p}_x,\Tilde{x})$ at $-\lambda(0)$ and denote $\Tilde{\gamma}:=\pi\circ \Tilde{\lambda}$, where $\pi:T^*G\rr G $ is the canonical projection. 
We show that $\gamma(L)=\Tilde{\gamma}(L)$, this will imply $t_\text{cut}(\gamma,0)\leq L$
since $\gamma\neq\Tilde{\gamma}$ and $\Length(\gamma|_{[0,L]})=\Length(\Tilde{\gamma}|_{[0,L]})$.\\
Write $(\theta,x):=\Phi^{-1}(\gamma)$ and $(\Tilde{\theta},\Tilde{x}):=\Phi^{-1}(\Tilde{\gamma})$.
By definition of $(\Tilde{p}_x,\Tilde{x})$ we have 
\begin{equation}
    \Tilde{x}(t)=x(-t),\quad\forall t\in\mathbb{R},
    \label{e reversex}
\end{equation}
Being $\Pi_\mu(\lambda)$ an $L$-periodic function we have that $x$ is $L$-periodic and consequently $\Tilde{x}(L)=x(-L)=x(L)$.
Moreover, the functions $F_i:\mathbb{R}\rr \mathbb{R}$, with $i\in\{1,...,m\}$,
\begin{equation*}
    F_i:=  \sum_1^{n_1} \langle \mu,\beta_l(x)\rangle\df\theta_i(\Phi^*\beta_l(x)),
\end{equation*}
are $L$-periodic, since they are functions of $x$, and therefore
\begin{equation}
    \int_0^L F_i(t)\df t=\int_0^LF_i(-t)\df t,\quad \forall i\in\{1,...,m\}.
    \label{e Freversed}
\end{equation}
Consequently, for all $i\in\{1,...,m\}$,
\begin{equation}
    \theta_i(L)\stackrel{\eqref{e liftequation_coordinates}}{=}\int_0^L F_i(t)\df t\stackrel{\eqref{e Freversed}}{=}\int_0^LF_i(-t)\df t\stackrel{\eqref{e liftequation_coordinates}}{=}\Tilde{\theta}_i(L).
\end{equation}
We proved the claim $\gamma(L)=\Tilde{\gamma}(L)$. Thus we concluded the proof of the proposition.

\end{proof}

\section{Examples}\label{s Examples}
We present some explicit examples of the symplectic reduction procedure described in Theorem~\ref{t formofHmu}. Throughout this section, we will give a proof of Corollary~\ref{c Eng}.

\subsection{2-abelian extensions}
We present some applications of Theorem \ref{t formofHmu} to \emph{2-abelian extensions}, i.e., to nilpotent groups containing a normal abelian subgroup of dimension $2$. The notation used in naming the following groups mainly follows \cite{Enr-fr}. We start with simple examples to help the reader to get familiar with the procedure used in Theorem \ref{t formofHmu}. 
\subsubsection{$F_{23}$ or Cartan group as $2$-abelian extension}
Let $F_{23}$ be the free-nilpotent Lie group of rank $2$ and step $3$, also known as Cartan group. 
The Lie algebra of $F_{23}$ is spanned by $5$ vector fields $X_1,X_2,Y_1,Y_2,Y_3$.  
A base of the first layer $V_1$ is given by $\{X_1,X_2\}$, and the only non-trivial bracket relations defining the Lie algebra structure of $\Lie(F_{23})$ are 
\begin{equation*}
Y_1 := [X_1,X_2], \qquad Y_2  := [X_1,Y_1] , \qquad Y_3  := [X_2,Y_1].
\end{equation*}
Choose $A:=\exp(\Span\{Y_1,Y_2,Y_3\})$. We identify $F_{23}$ with $(\mathbb{R}^5,\cdot)$, using the coordinates in Lemma \ref{l coordinates}. We choose as left-invariant orthonormal frame of the distribution the vector fields

\begin{eqnarray*}
X_1(x_1,x_2,\theta_1,\theta_2,\theta_3) & := & \frac{\partial}{\partial x_1},\\ X_2(x_1,x_2,\theta_1,\theta_2,\theta_3) & := & \frac{\partial}{\partial x_2} + x_1 \frac{\partial}{\partial \theta_1}+ \frac{x^2_1}{2}\frac{\partial}{\partial \theta_2} + x_1x_2 \frac{\partial}{\partial \theta_3};
\end{eqnarray*}
where $x_1,x_2,\theta_1,\theta_2,\theta_3\in\mathbb{R}$.
If we apply Theorem \ref{t formofHmu} to $F_{23}$, the subgroup $A$ and $\mu:=\sum_{i=1}^3 a_l\df \theta_l$, with $a_i\in\mathbb{R}$ for $i\in\{1,...,3\}$, by \eqref{e formofHmuexamples} the reduced Hamiltonian $H_\mu\in C^{\infty}(T^*\mathbb{R}^2)$ is 
\begin{equation*}
 H_{\mu}(p_x,x)  = \frac{1}{2} p_{x_1}^{2} + \frac{1}{2}\left(p_{x_2}+a_1 x_1 + a_{2} \frac{x_1^2}{2} + a_3 x_1x_2\right)^2, \forall (p_x,x)\in T^*\mathbb{R}^2.
\end{equation*}
The Hamiltonian flow of $H_\mu$ is Arnold-Liouville integrable. Indeed, for all $\mu\in\Lie(A)^*$ we can exhibit two meromorphic prime integrals that are in involution, $H_\mu$ and $C_\mu\in C^\infty(T^*\mathbb{R}^2)$, where the second is given by
$$ C_\mu := 
 a_3 p_x -a_2 p_y + a_1 a_3 x_2 + \frac{a_3^2}{2} x_2^2.$$
 Since $H_\mu$ and $C_\mu$ are smooth as functions of $\mu$, by Corollary \ref{c consequences} we have that the normal Hamiltonian flow in $F_{23}$ is Arnold-Liouville integrable.
The integrability of the normal flow in $F_{23}$ is already known in literature, we refer for example to \cite[ Exercise 7.80]{agrachev}.

\subsubsection{$N_{6,2,5a^*}$ as $2$-abelian extension}

Let $N_{6,2,5a^*}$ be the 
Carnot group 
with Lie algebra spanned by $6$ vectors $X_1,X_2,Y_1,Y_2,Y_3,Y_4$ satisfying the following bracket relations
\begin{equation*}
\begin{split}
Y_1 &= [X_1,X_2], \;\; Y_2  = [X_1,Y_1] ,\\
 \;\; Y_3  &= [X_2,Y_1], \;\; Y_4  = [X_1,Y_2] = [X_2,Y_3]. \\
  [X_2,Y_4]&=[Y_2,X_2]=[X_1,Y_4]=[X_1,Y_3]=0.
\end{split}
\end{equation*}
Using the coordinates defined in Lemma \ref{l coordinates} we identify $N_{6,2,5a^*}$ with $\mathbb{R}^6$. Choose as left-invariant orthonomal frame of the distribution the two vector fields $X_1,X_2\in\Vecc(\mathbb{R}^6)$ defined by 
\begin{eqnarray*}
    X_1(x_1,x_2,\theta) &:=& \frac{\partial}{\partial x_1} \\ X_2(x_1,x_2,\theta) &:=& \frac{\partial}{\partial x_2} + x_1 \frac{\partial}{\partial \theta_1}+ \frac{x^2_1}{2}\frac{\partial}{\partial \theta_2} + x_1x_2 \frac{\partial}{\partial \theta_3}  + (\frac{x_1^3}{3!} + \frac{x_1x_2^2}{2}) \frac{\partial}{\partial \theta_4}, 
\end{eqnarray*}
for all $x_1,x_2\in\mathbb{R}$, for all $\theta\in\mathbb{R}^4$.

Apply Theorem \ref{t formofHmu} with $G=N_{6,2,5a^*}$, $A:=\exp(\Span\{Y_i\}_{i=1,...,4})$ and $\mu:=\sum_{i=1}^4 a_i\df \theta_i$, with $a_i\in\mathbb{R}$ for all $i\in\{1,...,4\}$. By \eqref{e formofHmuexamples} we have that the reduced Hamiltonian $H_\mu\in C^{\infty}(T^*\mathbb{R}^2)$ is
\begin{equation*}
 H_{\mu}(p_x,x)  = \frac{1}{2}\left( p_{x_1}^{2} + \left(p_{x_2}+a_1 x_1 + a_{2} \frac{x_1^2}{2} + a_3 x_1x_2 + a_4 \left(\frac{x_1^3}{3!}+\frac{x_1x_2^2}{2}\right)  \right)^2\right),
\end{equation*}
for all  $(p_x,x)\in  T^*\mathbb{R}^2$.
The normal Hamiltonian flow is Arnold-Liouville integrable. Indeed, 
it was proven in \cite[Section 5]{kruglikov2017integrability} that the function
\begin{equation}
I(p_x,p_y,x,y) := P_{X_1} P_{Y_3} -  P_{X_2} P_{Y_2} + \frac{1}{2}P_{Y_1}^2,
\end{equation}
is an $A$-invariant prime integral in involution with the normal Hamiltonian and with the momentum map, where for $Z\in\Vecc(N_{6,2,5a^*})$ the function $P_{Z}\in C^\infty(T^*N_{6,2,5a^*})$ is defined by $P_Z(\lambda):=\langle\lambda ,Z\rangle$.

\subsubsection{$F_{24}$ as $2$-abelian extension}

Let $F_{24}$ be the free-nilpotent Lie algebra with rank $2$ and step $4$.
The Lie algebra of $F_{24}$ is spanned by the $8$ vectors $X_1,X_2,Y_1,...,Y_6$. The Lie algebra structure is defined by the only non trivial bracket relations 

\begin{equation*}
\begin{split}
Y_1 &:= [X_1,X_2], \qquad Y_2  := [X_1,Y_1] , \qquad Y_3  := [X_2,Y_1], \\
Y_4 &:= [X^1,Y_2], \qquad Y_5 : = [X_1,Y_3] = [X_2,Y_4], \qquad Y_6:= [X_2,Y_3]. \\
\end{split}
\end{equation*}
Identify $F_{24}$ with $(\mathbb{R}^8,\cdot)$ using the coordinates in Lemma \ref{l coordinates}.
Choose as orthonormal frame for the distribution the two vectorfields $X_1,X_2\in\Vecc(\mathbb{R}^8)$ defined by
\begin{eqnarray*}
    X_1(x_1,x_2,\theta) & := &\frac{\partial}{\partial x_1} \\
    X_2(x_1,x_2,\theta) & := &\frac{\partial}{\partial x_2} + x_1 \frac{\partial}{\partial \theta_1}+ \frac{x^2_1}{2}\frac{\partial}{\partial \theta_2} + x_1x_2 \frac{\partial}{\partial \theta_3}  + \frac{x_1^3}{3!}\frac{\partial}{\partial \theta_4} +\\  &&+\frac{x_1^2x_2}{2}\frac{\partial}{\partial \theta_5}+ \frac{x_1x_2^2}{2} \frac{\partial}{\partial \theta_6}, 
\end{eqnarray*}
for all $x_1,x_2\in\mathbb{R}$, for all $\theta\in\mathbb{R}^6$.
Apply Theorem \ref{t formofHmu} with $G=F_{24}$, with $A=\exp(\Span\{Y_i\}_{i=1,...,6})$ and $\mu=\sum_{i=1}^6 a_i\df \theta_i$, with $a_i\in\mathbb{R}$ for $i\in\{1,...,6\}$. By \eqref{e formofHmuexamples} the reduced Hamiltonian is 
\begin{equation*}
 H_{\mu}\left(p_x,x\right)  = \frac{1}{2} p_{x_1}^{2} + \frac{1}{2}\left(p_{x_2}+a_1 x_1 + a_{2} \frac{x_1^2}{2} + a_3 x_1x_2 + a_4 \frac{x_1^3}{3!}+ a_5\frac{x_1x_2^2}{2}+a_6\frac{x_2^2x_1}{2}\right)^2,
\end{equation*}
for all $(p_x,x)\in T^*\mathbb{R}^2$.

\subsection{Engel-type groups}\label{s engel_type}
Denote with $\Eng(n)$ the Carnot group with Lie algebra spanned by $2n+2$ vectors $X_1,...,X_n,Y_0,...,Y_{n+1}$ satisfying as only non trivial bracket relations 
\begin{equation}
Y_i := [X_i,Y_0], \qquad Y_{n+1} := [X_i,Y_i].
\label{e engel_type_brackets}
\end{equation}
These groups have been studied in \cite{ledonnemoisala}. Using the coordinates described in Lemma \ref{l coordinates}, we identify the group $\Eng(n)$ with $\mathbb{R}^{2n+2}$. We choose as orthonormal left-invariant frame of the distribution the vector fields $X_1,...,X_n,Y_0\in \Vecc(\mathbb{R}^{2n+2})$ defined for all $x\in\mathbb{R}^n,\theta\in \mathbb{R}^{n+2}$ by
\begin{eqnarray*}
    X_i(x,\theta) & := & \frac{\partial}{\partial x_i}, \ \ \forall i\in\{1,...,n\},\\
    Y_0(x,\theta) & := & \frac{\partial}{\partial \theta_0}+\sum_{i=1}^n x_i\frac{\partial}{\partial \theta_i}+\frac{x_i^2}{2}\frac{\partial}{\partial \theta_{n+i}}.
\end{eqnarray*}
Apply Theorem \ref{t formofHmu} with $G=\Eng(n)$, $A:=\Span(\{Y_l\}_{l\in \{ 0,...,n+1\}})$ and $\mu:=\sum_{i=0}^{n+1} a_i\df\theta_i$. By \eqref{e formofHmuexamples} we have that the reduced Hamiltonian $H_\mu\in C^\infty(\mathbb{R}^n)$ is of the form
\begin{equation}
 H_{\mu}(p_x,x)  = \frac{1}{2} ||p_x||_{\text{eu}}^{2} + \frac{1}{2} \left(a_0+\sum_{i=1}^{n} a_i x_i +  \frac{a_{n+1}}{2}||x||_{\text{eu}}^2\right)^2, \ \ \forall (p_x,x)\in T^*\mathbb{R}^n,
 \label{e reduced_Hamiltonian_Engel}
\end{equation}
where $||\cdot||_{\text{eu}}$ denotes the euclidean norm. Since for all $\mu \in \Lie(A)^*$ we get the Hamiltonian of an harmonic oscillator, the Hamiltonian flow of the reduced Hamiltonian $H_\mu$ is Arnold-Liouville integrable for all $\mu\in \Lie(A)^*$ with set of prime integrals smoothly depending on $\mu$ (see for example \cite{central_forces_integrability}).
In particular, as an immediate consequence of Corollary~\ref{c consequences}, we get Corollary \ref{c Eng}.\\
The integrability of the normal Hamiltonian flow in $\Eng(n)$ can be proven directly exhibiting $2n+2$ independent prime integrals that are in involution, we refer to Appendix \ref{appendix_eng} for an extended presentation.

\appendix
\section{Proof of Lemma \ref{l hill_region}}\label{appendix_proof_Hill_region}
\begin{proof}[Proof of Lemma \ref{l hill_region}]
We claim that we can choose $\epsilon>0$ such that for all $y\in V^{-1}([\frac{1}{2}-\epsilon,\frac{1}{2}])\cap \Omega$ we have $\nabla V(y)\neq 0$ (we denote with $\nabla$ the gradient with respect to the chosen scalar product).
Indeed, if by contradiction this was not possible, we could find a sequence of points $y_n\in V^{-1}([\frac{1}{2}-\frac{1}{n},\frac{1}{2}])\cap\Omega $, with $n\in\mathbb{N}$, for which $\nabla V(y_n)=0$. Being $\Omega$ compact, up to sub-sequence we can assume that $\{y_n\}_{n\in\mathbb{N}}$ converges to some $y\in \Omega$. By the smoothness of $V$ we would have $V(y)=\frac{1}{2}$ and $\nabla V(y)=0$, this would contradict the hypothesis that $\frac{1}{2}$ is a regular value for $V$ restricted to some neighborhood of $\Omega$.
Denote $\Omega_\epsilon:= V^{-1}([\frac{1}{2}-\epsilon,\frac{1}{2}])\cap \Omega$ and $\Omega_{\frac{\epsilon}{2}}:= V^{-1}([\frac{1}{2}-\frac{\epsilon}{2},\frac{1}{2}])\cap \Omega $.
Set $M:=\sup_{y\in\Omega, v\in B(0,1)}\sum_{i,j}\frac{\partial^2V}{\partial_{x_i}\partial_{x_j}}(y)v_iv_j $. Up to change $\epsilon$ we can assume that 
\begin{equation}
    \delta=\delta(\epsilon):=\inf_{y\in\Omega_\epsilon}||\nabla V(y)||^2-M\epsilon>0
    \label{e estimatedelta}
\end{equation}
Indeed, $\delta(\epsilon)$ is increasing as $\epsilon$ decreases to $0$, and $\delta(0)>0$.\\
We prove equation \eqref{e avaregeofxdot} in two steps:
\begin{enumerate}
    \item We show that there exists $C_\epsilon>0$ such that, for all $t_1,t_2\in\mathbb{R}$, with $t_1<t_2$, if $(q(t),y(t)):[t_1,t_2]\rr T^*\mathbb{R}^n$ solves the Hamilton equation for $H$, $H(q(t_1),y(t_1))=\frac{1}{2}$ and one of the two following condition holds
    \begin{enumerate}
        \item $y(t)\in \Omega\setminus \Omega_{\frac{\epsilon}{2}}$ for all $t\in [t_1,t_2]$;
        \item $V(y(t_1))=V(y(t_2))=\frac{1}{2}-\epsilon$, $y(t)\in \Omega_\epsilon$ for all $t\in[t_1,t_2]$, and there exists $\Bar{t}\in (t_1,t_2)$ such that $y(\Bar{t})\in\Omega_{\frac{\epsilon}{2}}$;
    \end{enumerate}
    then 
    \begin{equation*}
        \frac{1}{t_2-t_1}\int_{t_1}^{t_2}\sqrt{1-||\dot{y}(t)||^2}<1-C_\epsilon.
    \end{equation*}
    \item We prove that there exists a sequence $\{t_i\}_{i\in\mathbb{N}}$ with $0\leq t_i<t_{i+1}$ and $\lim_{i\rr \infty}t_i=+\infty$, such that for all $i>1$ one between conditions (a) and (b) of point (1) holds for $x|_{[t_i,t_{i+1}]}$.   
\end{enumerate}
If (1) and (2) hold it is trivial to prove that
\begin{equation*}
    \limsup_{T\rr\infty} \frac{1}{T}\int_0^T\sqrt{1-||\dot{x}(t)||^2}\df t\leq 1-C_\epsilon.
\end{equation*}
Therefore, proving (i) and (ii) we prove \eqref{e avaregeofxdot} and we conclude the proof of the lemma.\\
We start proving point (i): set $C:=\sup_\Omega ||\nabla V||>0$ and
\begin{equation}
    C_\epsilon:=\min\bigg\{1-\sqrt{1-\frac{\epsilon}{2}}, \frac{\delta}{2C^2}\left(1-\sqrt{1-\frac{\epsilon}{2}}\right)\bigg\}.
    \label{e defC_epsilon}
\end{equation}
Being $H(q(t),y(t))=\frac{1}{2}$ for all $t\in [t_1,t_2]$ and $\dot{y}=q$, we have
\begin{equation}
    ||\dot{y}||=\sqrt{\frac{1}{2}-V(y(t))},\quad \forall t\in[t_1,t_2].
    \label{e normydot}
\end{equation}
If condition (a) holds, then $V(y(t))\leq \frac{1}{2}-\frac{\epsilon}{2}$ for all $t\in[t_1,t_2]$. Consequently, by \eqref{e normydot} we have $||\dot{y}||\geq \sqrt{\frac{\epsilon}{2}}$ and therefore
\begin{equation*}
     \frac{1}{t_2-t_1}\int_{t_1}^{t_2}\sqrt{1-||\dot{y}(t)||^2}\df t < \sqrt{1-\frac{\epsilon}{2}}\leq 1-C_\epsilon.
\end{equation*}
Assume now that condition (b) holds. For all $t\in[t_1,t_2]$ we have
\begin{eqnarray}
(V(y(t)))'= \nabla V\cdot \dot{y}(t);
\label{e firstderivativepotential}\\
    (V(y(t)))''=-||\nabla V(y(t))||^2+\sum_{i,j}\frac{\partial^2V}{\partial_{x_i}\partial_{x_j}}(y(t))\dot{y}_i(t)\dot{y}_j(t).
    \label{e secondderivativepotential}
\end{eqnarray}
For all $t\in[t_1,t_2]$, since $y(t)\in\Omega_\epsilon$ we have by \eqref{e normydot} that 
\begin{equation}
    ||\dot{y}(t)||\leq \sqrt{\epsilon}.
    \label{e upperboundnormy}
\end{equation}  
By the definition of $\delta$ in \eqref{e estimatedelta} and equations \eqref{e firstderivativepotential}, \eqref{e secondderivativepotential} and \eqref{e upperboundnormy}, we have for all $t\in[t_1,t_2]$ that
\begin{eqnarray}
    V(y(t))'\leq C\sqrt{\epsilon}, 
    \label{e boundfirstderivativeonstrip}\\
    V(y(t))''\leq -\delta.
    \label{e boundsecondderivativeonstrip}
\end{eqnarray}
Equations \eqref{e boundfirstderivativeonstrip} and \eqref{e boundsecondderivativeonstrip} imply
\begin{eqnarray*}
    \frac{1}{2}-\epsilon=V(t_2)\leq V(y(t_1))+V'(t_1)(t_2-t_1)-\delta (t_2-t_1)^2\\
    \leq \frac{1}{2}-\epsilon+C\sqrt{\epsilon}(t_2-t_1)-\delta (t_2-t_1)^2.
\end{eqnarray*}
Therefore, 
\begin{equation}
    t_2-t_1\leq \frac{C\sqrt{\epsilon}}{\delta}.
    \label{e exittime}
\end{equation}
Since there exists $\Bar{t}$ such that $y(\Bar{t})\in\Omega_{\frac{\epsilon}{2}}$, and since $V(y(t_1))=\frac{1}{2}-\epsilon$ and $y(t)\in \Omega_\epsilon$ for all $t\in[t_1,t_2]$, there exist $t_3\in[t_1,t_2]$ such that $V(y(t_3))=\frac{1}{2}-\frac{\epsilon}{2}$ and $y(t)\in\Omega_{\epsilon}\setminus\Omega_{\frac{\epsilon}{2}}$ for all $t\in [t_1,t_3]$. By \eqref{e boundfirstderivativeonstrip} we have
\begin{equation}
    t_3-t_1\geq \frac{\sqrt{\epsilon}}{2C}.
    \label{e boundshortinterval}
\end{equation}
By \eqref{e normydot} we have $||\dot{y}(t)||\geq \sqrt{\frac{\epsilon}{2}}$ for all $t\in[t_1,t_3]$. Therefore
\begin{equation}
  \int_{t_1}^{t_3}\sqrt{1-||\dot{y}(t)||^2}\df t\leq (t_3-t_1)\sqrt{1-\frac{\epsilon}{2}}.
    \label{e estimateintegraleonshortpath}
\end{equation}
We have
\begin{eqnarray*}
\frac{1}{t_2-t_1}\int_{t_1}^{t_2}\sqrt{1-||\dot{y}(t)||^2}\df t&=&\\ 
&=&\frac{1}{t_2-t_1}\int_{t_1}^{t_3}\sqrt{1-||\dot{y}(t)||^2}\df t \\ &&+\frac{1}{t_2-t_1}\int_{t_3}^{t_2}\sqrt{1-||\dot{y}(t)||^2}\df t \\
&\stackrel{\eqref{e estimateintegraleonshortpath}}{\leq} &\frac{(t_3-t_1)}{t_2-t_1}\sqrt{1-\frac{\epsilon}{2}}+\frac{t_2-t_3}{t_2-t_1}\\
&=&1+\frac{(t_3-t_1)}{t_2-t_1}(\sqrt{1-\frac{\epsilon}{2}}-1)\\
&\stackrel{\eqref{e exittime},\eqref{e boundshortinterval}}{\leq}& 1- \frac{\delta}{2C^2}(1-\sqrt{1-\frac{\epsilon}{2}})
\stackrel{\eqref{e defC_epsilon}}{\leq} 1-C_\epsilon.
\end{eqnarray*}
thus we concluded the proof of (i).\\
We prove now (ii). Up to choose $\epsilon$ small we can assume $x(0)\notin \Omega_\epsilon$. Choose $t_1=0$. Define for all $k\in\mathbb{N}$, with $k>0$,
\begin{eqnarray*}
    t_{2k}:= &\inf\big\{t>t_{2k-1} :  &V(x(t))=\frac{1}{2}-\epsilon \text{ and }\\ & &s>t,V(x(s))=\frac{1}{2}-\epsilon \implies \exists s'\in(t,s) \text{ s.t. }x(s')\in \Omega_{\frac{\epsilon}{2}}\big\}
  \\
    t_{2k+1}:=&\inf\big\{t>t_{2k} : &V(x(t))=\frac{1}{2}-\epsilon \big\}.
\end{eqnarray*}
It is trivial to check that for all $k\in\mathbb{N}$ we have that condition (a) holds for $x|_{[t_{2k+1},t_{2k+2}]}$ and property (b) holds for $x|_{[t_{2k},t_{2k+1}]}$, thus the proof of point (ii) (and therefore the proof of the lemma) is concluded.
\end{proof}

\section{Integrability of the normal Hamiltonian flow in $\Eng(n)$}\label{appendix_eng}
This appendix is dedicated to an alternative proof of the integrability of the normal Hamiltonian flow in Engel-type groups. We refer to Section \ref{s engel_type} for the definition of $\Eng(n)$. The proof we present follows \cite{central_forces_integrability}.\\
Fix $n\in\mathbb{N}$. Choose a base $X_1,....,X_n,Y_0,...,Y_{n+1}$ of $\Lie(\Eng(n))$ satisfying \eqref{e engel_type_brackets}, with $X_1,...,X_n,Y_0$ orthonormal base of the first layer. For $X\in \Vecc(\Eng(n))$ define $P_X\in C^\infty(T^*M)$ setting
\begin{equation}
    P_X(\lambda)=\langle\lambda,X\rangle, \ \ \forall \lambda\in T^*G.
    \label{e def_P_X}
\end{equation}
We remark that the normal Hamiltonian (see \eqref{e defnormalhamiltonian}) is 
\begin{equation}
    H=\sum_{i=1}^n P_{X_i}^2+P_{Y_0}^2.
    \label{e normal_Hamiltonian_P_X}
\end{equation}
When computing Poisson brackets we will make constant use of the following relation
: for all left-invariant vector-fields $X,Y\in \Vecc(T^*M)$ we have
\begin{equation}
\{P_X,P_Y\}=P_{[X,Y]}.
\label{e poisson_bracket_P_X}
\end{equation}
For, $i,j  \in\{1,...,n\}, i\neq j$ and $N\in \{1,...,n\}$ define $L_{ij},C_N\in C^\infty(T^*\Eng(n))$
\begin{equation}
\begin{split}
 L_{ij } & :=P_{X_i}P_{Y_j} - P_{X_j} P_{Y_i};\\
 C_{N} & := \frac{1}{2} \sum_{\stackrel{i,j\in\{1,...,N\}}{i\neq j}}  L_{ij}^2.
\end{split}
\label{e def_L_ij}
\end{equation}
We start proving that the $L_{ij}$'s and the $C_N$'s are constant along the flow of the normal Hamiltonian.
\begin{lemma}\label{lem:cons-mot-En(n)}
The functions  $L_{ij}$ and $C_N$ are 
prime integrals for the normal Hamiltonian flow in $\Eng(n)$.    
\label{l appendix_B_prime_integrals}
\end{lemma}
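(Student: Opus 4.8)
The plan is to verify directly that $\{L_{ij},H\}=0$ and $\{C_N,H\}=0$ for all admissible indices, since a prime integral for the normal Hamiltonian flow is by definition a function whose Poisson bracket with $H$ vanishes. Everything follows from the tools already established: the identity $\{P_X,P_Y\}=P_{[X,Y]}$ of \eqref{e poisson_bracket_P_X}, the Leibniz rule for the Poisson bracket, and the structure relations \eqref{e engel_type_brackets}, recalling from \eqref{e normal_Hamiltonian_P_X} that $H=\sum_{k=1}^n P_{X_k}^2+P_{Y_0}^2$.

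First I would compute the two elementary brackets $\{P_{X_i},H\}$ and $\{P_{Y_j},H\}$. Expanding $H$ by the Leibniz rule and applying \eqref{e poisson_bracket_P_X}, the only structure constants that survive are those coming from $[X_i,Y_0]=Y_i$ and $[X_i,Y_i]=Y_{n+1}$; all brackets among the $X_k$, as well as $[Y_j,Y_0]$, vanish. This gives
\begin{equation*}
\{P_{X_i},H\}=2P_{Y_0}P_{Y_i},\qquad \{P_{Y_j},H\}=-2P_{X_j}P_{Y_{n+1}}.
\end{equation*}

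Next I would feed these into $L_{ij}=P_{X_i}P_{Y_j}-P_{X_j}P_{Y_i}$ using the Leibniz rule once more, obtaining
\begin{equation*}
\{P_{X_i}P_{Y_j},H\}=-2P_{X_i}P_{X_j}P_{Y_{n+1}}+2P_{Y_0}P_{Y_i}P_{Y_j},
\end{equation*}
and the analogous expression for $\{P_{X_j}P_{Y_i},H\}$, obtained by interchanging $i$ and $j$. Because the products $P_{X_i}P_{X_j}$ and $P_{Y_i}P_{Y_j}$ are ordinary (hence symmetric) products of functions, the two right-hand sides coincide, so their difference vanishes and $\{L_{ij},H\}=0$. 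Finally, since $C_N=\tfrac12\sum_{i\neq j}L_{ij}^2$ by \eqref{e def_L_ij}, a last application of the Leibniz rule yields $\{C_N,H\}=\sum_{i\neq j}L_{ij}\{L_{ij},H\}=0$.

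I do not anticipate a genuine obstacle: the statement reduces to a finite, bookkeeping-level computation of Poisson brackets. The one place deserving care is the cancellation producing $\{L_{ij},H\}=0$, which plays the antisymmetry of $L_{ij}$ in $i,j$ against the symmetry of the quadratic terms; in carrying it out one must be careful not to drop any off-diagonal $[X_k,Y_j]$ contribution or any term involving $P_{Y_{n+1}}$. Once $\{L_{ij},H\}=0$ is secured, the claim for $C_N$ is purely formal.
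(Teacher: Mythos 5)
Your proposal is correct and follows essentially the same route as the paper: a direct computation of $\{L_{ij},H\}$ via the Leibniz rule and the identity $\{P_X,P_Y\}=P_{[X,Y]}$, with the vanishing coming from the symmetry of the products $P_{X_i}P_{X_j}$ and $P_{Y_i}P_{Y_j}$ against the antisymmetry of $L_{ij}$, and then $\{C_N,H\}=\sum L_{ij}\{L_{ij},H\}=0$. The only (immaterial) difference is organizational --- you isolate the elementary brackets $\{P_{X_i},H\}=2P_{Y_0}P_{Y_i}$ and $\{P_{Y_j},H\}=-2P_{X_j}P_{Y_{n+1}}$ first, whereas the paper carries out the same cancellation in a single display.
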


\begin{proof}
Fix $i,j\in \{1,...,n\}$, $i\neq j$. To prove that $L_{ij}$ is constant along the flow of the normal Hamiltonian $H$ we prove that $\{L_{ij},H\}=0$:
\begin{equation*}
\begin{split}
\{ L_{ij} , H \}  \stackrel{\eqref{e def_L_ij}}{=} & P_{X_i} \{P_{Y_j}, H \} +                        P_{Y_j} \{ P_{X_i},H \}  - P_{X_j}  \{P_{Y_i},H \} -  P_{Y_i} \{ P_{X_j},H \} \\
                 \stackrel{\eqref{e normal_Hamiltonian_P_X},\eqref{e poisson_bracket_P_X}}{=} &   - P_{X_i}P_{X_j}P_{Y_{n+1}} + P_{Y_j} P_{Y_0} P_{Y_i} + P_{X_j}P_{X_i} P_{Y_{n+1}} - P_{Y_i}P_{Y_0} P_{Y_j} = 0. \\
\end{split}
\end{equation*}
$C_N$ is constant along the flow of the normal Hamiltonian since it is the sum of functions that are constant along the flow of the normal Hamiltonian.
\end{proof}
In the next three lemmas we compute some Poisson brackets that we will need to prove that the set of prime integrals we consider in the proof of Theorem \eqref{the1:eng-n} are in involution.
\begin{lemma}
Fix $i,j,k,l\in\{1,...,n\}$, with $i\neq j$ and $k\neq l$. We have 
\begin{equation}
\begin{split}
    \{ L_{ij},L_{kl} \} &= P_{Y_{n+1}} (\delta_{ik}L_{jl} +\delta_{jl} L_{ik} - \delta_{il} L_{jk} - \delta_{jk}L_{il}).\\
\end{split}
\label{e bracket_L_L}
\end{equation}
\end{lemma}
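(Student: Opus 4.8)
The plan is to compute the bracket directly from the Leibniz rule and the fundamental relation \eqref{e poisson_bracket_P_X}, $\{P_X,P_Y\}=P_{[X,Y]}$, after first recording the structure constants of $\Eng(n)$ that actually enter. From \eqref{e engel_type_brackets} and the fact that these are the only nontrivial brackets, the only ones I need are
\begin{equation*}
[X_a,Y_b]=\delta_{ab}Y_{n+1},\qquad [X_a,X_b]=0,\qquad [Y_a,Y_b]=0,
\end{equation*}
for $a,b\in\{1,\dots,n\}$, together with the observation that $Y_{n+1}$ is central, since it spans the top layer of the stratification. Consequently $P_{Y_{n+1}}$ Poisson-commutes with every $P_{X_a}$ and $P_{Y_b}$. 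Translating via \eqref{e poisson_bracket_P_X}, this gives $\{P_{X_a},P_{Y_b}\}=\delta_{ab}P_{Y_{n+1}}$, while brackets of two $P_X$'s or two $P_Y$'s vanish.

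Next I would expand $\{L_{ij},L_{kl}\}$ by bilinearity and the Leibniz rule. Writing $L_{ij}=P_{X_i}P_{Y_j}-P_{X_j}P_{Y_i}$ as in \eqref{e def_L_ij}, the bracket splits into four brackets of products, and the basic building block is
\begin{equation*}
\{P_{X_a}P_{Y_b},\,P_{X_c}P_{Y_d}\}=P_{Y_{n+1}}\left(\delta_{ad}P_{X_c}P_{Y_b}-\delta_{bc}P_{X_a}P_{Y_d}\right),
\end{equation*}
which follows by applying Leibniz twice and discarding the $X$-$X$ and $Y$-$Y$ brackets; the centrality of $Y_{n+1}$ is precisely what lets me pull the common factor $P_{Y_{n+1}}$ out front. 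Since every surviving term carries this factor, the whole computation reduces to bookkeeping of Kronecker deltas.

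Finally I would substitute the four instances of the building block, namely the index assignments $(i,j,k,l)$, $(i,j,l,k)$, $(j,i,k,l)$, $(j,i,l,k)$, each equipped with the sign dictated by the two minus signs in $L_{ij}$ and $L_{kl}$, factor out $P_{Y_{n+1}}$, and group the eight resulting monomials according to which delta they carry. Each group of two monomials recombines into one of the $L$'s: the $\delta_{ik}$ terms assemble into $\delta_{ik}L_{jl}$, the $\delta_{jl}$ terms into $\delta_{jl}L_{ik}$, the $\delta_{il}$ terms into $-\delta_{il}L_{jk}$, and the $\delta_{jk}$ terms into $-\delta_{jk}L_{il}$, yielding exactly \eqref{e bracket_L_L}. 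There is no conceptual obstacle here; the only thing to watch is the sign bookkeeping, since each $L$ is antisymmetric in its indices and the two monomials in each delta-group must be recognized as $\pm\left(P_{X_a}P_{Y_b}-P_{X_b}P_{Y_a}\right)$ with the correct orientation.
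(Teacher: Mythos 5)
Your proposal is correct and follows essentially the same route as the paper: a direct Leibniz expansion using $\{P_X,P_Y\}=P_{[X,Y]}$ and the structure constants $[X_a,Y_b]=\delta_{ab}Y_{n+1}$, $[X_a,X_b]=[Y_a,Y_b]=0$, with the paper merely organizing the computation through the intermediate brackets $\{P_{X_i},L_{kl}\}$ and $\{P_{Y_j},L_{kl}\}$ instead of your monomial-versus-monomial building block. One minor remark: pulling the factor $P_{Y_{n+1}}$ out front requires only commutativity of pointwise multiplication of functions, not the centrality of $Y_{n+1}$ (no Poisson bracket involving $P_{Y_{n+1}}$ ever arises in this computation).
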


\begin{proof}
 We start the proof computing the Poisson brackets $\{P_{X_i},L_{kl}\}$ and $\{P_{Y_j},L_{kl}\}$: we have
\begin{equation}
\begin{split}
  \{ P_{X_i},L_{kl}\}  \stackrel{\eqref{e def_L_ij}}{=}& P_{X_k}\{P_{X_i},P_{Y_l} \} + P_{Y_l}\{P_{X_i},P_{X_k} \} \\ & - P_{X_l} \{ P_{X_i}, P_{Y_k} \} - P_{Y_k} \{ P_{X_i}, P_{X_l} \} \\
                     \stackrel{\eqref{e poisson_bracket_P_X}}{=}& P_{Y_{n+1}} (P_{X_k}\delta_{il} - P_{X_l} \delta_{ik}),
\end{split}
\label{e bracket_L_X}
\end{equation}
and
\begin{equation}
\begin{split}
    \{ P_{Y_j},L_{kl} \}  \stackrel{\eqref{e def_L_ij}}{=} & P_{X_k}\{P_{Y_j},P_{Y_l} \} + P_{Y_l}\{P_{Y_j},P_{X_k} \}\\ &- P_{X_l} \{ P_{Y_j}, P_{Y_k} \} - P_{Y_k} \{ P_{Y_j}, P_{X_l} \} \\
                     \stackrel{\eqref{e poisson_bracket_P_X}}{=} & P_{Y_{n+1}} (-P_{Y_l}\delta_{jk} + P_{Y_k} \delta_{jl}).  
\end{split}
\label{e bracket_L_Y}
\end{equation}
Using the two equations above we compute $\{ L_{ij},L_{kl} \}$ proving the lemma:
\begin{equation*}
\begin{split}
  \{ L_{ij},L_{kl} \} \stackrel{\eqref{e def_L_ij}}{=} &  P_{X_i}\{P_{Y_j},L_{kl} \} + P_{Y_j}\{P_{X_i},L_{kl} \}\\ & - P_{X_j} \{ P_{Y_i}, L_{kl} \} - P_{Y_i} \{ P_{X_j}, L_{kl} \} \\
                    \stackrel{\eqref{e bracket_L_X},\eqref{e bracket_L_Y}}{=} &  P_{Y_{n+1}} (P_{X_i} (-P_{Y_l}\delta_{jk} + P_{Y_k} \delta_{jl}) + P_{Y_j} (P_{X_k}\delta_{il} - P_{X_l} \delta_{ik})   \\
                    & - P_{X_j} (-P_{Y_l}\delta_{ik} + P_{Y_k} \delta_{il}) - P_{Y_i} (P_{X_k}\delta_{jl} - P_{X_l} \delta_{jk}) ) \\
                  & =   P_{Y_{n+1}} (\delta_{ik}L_{jl} +\delta_{jl} L_{ik} - \delta_{il} L_{jk} - \delta_{jk}L_{il}) .\\
\end{split}
\end{equation*}
\end{proof}

\begin{lemma}\label{lem:cons-inv-En(n)}
The functions $L_{ij}$ and $C_N$ satisfy 
\begin{equation}
\begin{split}
    \{ C_N, L_{kl}\} &  = 0 \,\,\,\,  \text{if} \;\;  N \leq k < l \;\; \text{or} \;\; k < l \leq N. \\
\end{split}
\label{e C_n_Lk}
\end{equation}
\end{lemma}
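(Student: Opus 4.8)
The plan is to reduce the whole computation to the single-bracket formula \eqref{e bracket_L_L} via the Leibniz rule for the Poisson bracket, and then to run a purely combinatorial analysis of the Kronecker deltas that appear. First I would use bilinearity together with $\{fg,h\}=f\{g,h\}+\{f,h\}g$, which gives $\{L_{ij}^2,L_{kl}\}=2L_{ij}\{L_{ij},L_{kl}\}$, to write
\begin{equation*}
\{C_N,L_{kl}\}
=\frac12\sum_{\substack{i,j\le N\\ i\neq j}}\{L_{ij}^2,L_{kl}\}
=\sum_{\substack{i,j\le N\\ i\neq j}}L_{ij}\,\{L_{ij},L_{kl}\}.
\end{equation*}
Substituting \eqref{e bracket_L_L} and pulling the common factor $P_{Y_{n+1}}$ out of the sum turns the statement into the vanishing of a finite sum of products of $L$'s weighted by the four deltas $\delta_{ik},\delta_{jl},\delta_{il},\delta_{jk}$, with all summation indices constrained to $i,j\in\{1,\dots,N\}$.

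Next I would split into the two regimes according to the position of $k<l$ relative to $N$. In the regime where both indices lie outside $\{1,\dots,N\}$ (the generic part of $N\le k<l$), every summand has $i,j\le N<k<l$, so all four deltas are identically zero and $\{C_N,L_{kl}\}=0$ term by term. In the regime $k<l\le N$ all four deltas can be active, and imposing them collapses the double sum to four families of terms obtained by successively setting $i=k$, $j=l$, $i=l$, and $j=k$; these are products of the form $L_{kj}L_{jl}$, $L_{il}L_{ik}$, and their companions.

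The hard part will be this second case, namely showing that the four surviving families cancel. Here I would exploit the antisymmetry $L_{ab}=-L_{ba}$ together with a relabeling of the free dummy index to pair the ``$i=k$'' family against the ``$j=k$'' family and the ``$j=l$'' family against the ``$i=l$'' family, so that the terms annihilate in pairs. This is exactly the classical fact that the quadratic Casimir $\tfrac12\sum_{i\neq j\le N}L_{ij}^2$ of the $\mathfrak{so}(N)$-type bracket \eqref{e bracket_L_L} commutes with each internal generator $L_{kl}$ with $k<l\le N$; the only extra ingredient here is the harmless scalar prefactor $P_{Y_{n+1}}$, which does not interfere with the cancellation since it is common to all terms. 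I expect the sign bookkeeping and the index relabeling in this pairing to be the only genuinely delicate point, the rest of the argument being a direct substitution.
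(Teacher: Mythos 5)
You follow the same route as the paper: Leibniz gives $\{C_N,L_{kl}\}=\sum_{i\neq j,\ i,j\le N}L_{ij}\{L_{ij},L_{kl}\}$, then one substitutes \eqref{e bracket_L_L} and sorts the delta terms; your identification of the trivial regime is in fact cleaner than the paper's (all four deltas vanish identically exactly when $N<k<l$, not when $k<l<N$ as the paper writes). However, your key cancellation step is wrong. Take $k<l\le N$. The ``$i=k$'' family is $\sum_{j\neq k}L_{kj}L_{jl}$, while the ``$j=k$'' family is $-\sum_{i\neq k}L_{ik}L_{il}=+\sum_{i\neq k}L_{ki}L_{il}$ by antisymmetry; after renaming the dummy index these two families are \emph{equal}, so your first pair adds up to $2\sum_{j}L_{kj}L_{jl}$ instead of annihilating, and the same doubling occurs in your second pair, which gives $2\sum_{i}L_{il}L_{ik}$. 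The cancellation that actually closes the computation is \emph{across} your two pairs: since $L_{il}L_{ik}=-L_{ki}L_{il}$, one has $\sum_{i\neq l}L_{il}L_{ik}=-\sum_{i\neq l}L_{ki}L_{il}$, and comparing this with $\sum_{i\neq k}L_{ki}L_{il}$ the only mismatched terms are $L_{kl}L_{ll}$ and $L_{kk}L_{kl}$, which vanish because $L_{kk}=L_{ll}=0$. (For what it is worth, the paper's own justification of this case is also broken: its summand is symmetric under $i\leftrightarrow j$, so its two half-sums over $i<j$ and $j<i$ are equal rather than opposite; the cross-pairing above is what really proves the case $k<l\le N$.)

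The second, more serious, gap is the boundary case $k=N<l$, which you set aside by treating only ``the generic part'' of $N\le k<l$ and never return to. This case cannot be repaired, because the statement is false there: only the $\delta_{ik}$ and $\delta_{jk}$ families survive (as $l>N\ge i,j$), and by the doubling phenomenon above they reinforce rather than cancel, giving $\{C_N,L_{Nl}\}=2P_{Y_{n+1}}\sum_{j<N}L_{Nj}L_{jl}$, which is not identically zero. Concretely, with $N=2$, $k=2$, $l=3$ one finds $\{C_2,L_{23}\}=2L_{12}\{L_{12},L_{23}\}=-2P_{Y_{n+1}}L_{12}L_{13}\neq 0$. So what your method (correctly completed) proves is $\{C_N,L_{kl}\}=0$ for $N<k<l$ or $k<l\le N$; the remaining case claimed by the lemma, which the paper dismisses with ``the case $N\le k<l$ is analogous'', is actually a counterexample. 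This is not an idle point: in the proof of Theorem \ref{the1:eng-n} for odd $n$, the involution of $C_{2a}$ with $L_{2a,2a+1}$ is precisely an instance of $k=N<l$, so that part of the argument needs a different set of prime integrals or a separate verification.
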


\begin{proof}
For $N\in\{1,...,n\}$ and $k,l\in\{1,...,n\}$ we have 
\begin{equation}
\begin{split}
    \{ C_N, L_{kl}\} &  \stackrel{\eqref{e def_L_ij}}{=} \sum_{i,j = 1 }^N L_{ij} \{L_{ij},L_{kl} \} \\ &\stackrel{\eqref{e bracket_L_L}}{=} P_{Y_{n+1}} \sum_{i,j = 1 }^N L_{ij} (\delta_{ik}L_{jl} +\delta_{jl} L_{ik} - \delta_{il} L_{jk} - \delta_{jk}L_{il}).
\end{split}    
\label{e bracket_C_L_1}
\end{equation}
We prove \eqref{e C_n_Lk} for $k < l \leq N$, the case $N \leq k < l$ is analogous.
  If $k < l < N $, then   $\delta_{ik}$, $\delta_{jl}$, $\delta_{il} L_{jk}$ and $\delta_{jk}$ are zero and the lemma is trivially true. Assume now $k < l = N$. We have 
  \begin{equation*}
\begin{split}
    \{ C_N, L_{kl}\} &  \stackrel{\eqref{e bracket_C_L_1}}{=} P_{Y_{n+1}} \sum_{i < j \leq N } L_{ij} (\delta_{ik}L_{jl} +\delta_{jl} L_{ik} - \delta_{il} L_{jk} - \delta_{jk}L_{il}) \\
    & + P_{Y_{n+1}} \sum_{j < i\leq N  } L_{ij} (\delta_{ik}L_{jl} +\delta_{jl} L_{ik} - \delta_{il} L_{jk} - \delta_{jk}L_{il})=0 \\
\end{split}    
\end{equation*}
where in the last equality we used $L_{ij}=-L_{ji}$.
\end{proof}

We can finally prove the integrability of the normal Hamiltonian flow in $\Eng(n)$.
\begin{Theorem}\label{the1:eng-n}
The normal Hamiltonian flow in $\Eng(n)$ is Arnold-Liouville integrable.
\end{Theorem}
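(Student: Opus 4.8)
The plan is to exhibit $2n+2$ functionally independent prime integrals of the normal Hamiltonian $H$ that are pairwise in involution, which is precisely what Arnold--Liouville integrability on the $(4n+4)$-dimensional symplectic manifold $T^*\Eng(n)$ demands. The candidate set is
\begin{equation*}
\mathcal{F}:=\{J_0,\dots,J_{n+1}\}\cup\{H,C_2,\dots,C_n\},
\end{equation*}
where $J_i:=J(\cdot)(Y_i)$ are the components of the momentum map \eqref{e defmomentumabelian} of the left action of $A:=\exp(\Span\{Y_0,\dots,Y_{n+1}\})$, and $C_2,\dots,C_n$ are the nested functions from \eqref{e def_L_ij}. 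This is $(n+2)+1+(n-1)=2n+2$ functions, as required. The purely left-invariant integrals $L_{ij},C_N$ alone cannot suffice (for $n=2$ they produce far fewer than $6$ commuting functions), so the configuration-dependent integrals $J_i$ are genuinely needed.

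First I would check that every function in $\mathcal{F}$ is a prime integral. For $H$ this is trivial, for the $C_N$ it is Lemma~\ref{l appendix_B_prime_integrals}, and for the $J_i$ it follows from Corollary~\ref{c dynamicreduction}: the normal Hamiltonian is $A$-invariant, hence $J$ is constant along its flow. Next I would verify involution. The $J_i$ are in involution with each other because $A$ is abelian. Crucially, each $J_i$ also Poisson-commutes with $H$ and with every $C_N$: these functions are built from the left-invariant momenta $P_{X_k},P_{Y_l}$, while $J_i$ is built from a right-invariant momentum, and since left-invariant and right-invariant vector fields on $G$ commute, the associated fibre-linear functions Poisson-commute; the Leibniz rule then gives $\{J_i,H\}=\{J_i,C_N\}=0$. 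That $\{H,C_N\}=0$ is again Lemma~\ref{l appendix_B_prime_integrals}. Finally, for $M<N$ the Leibniz rule yields $\{C_N,C_M\}=\sum_{k<l\le M}2L_{kl}\{C_N,L_{kl}\}$, and since $k<l\le M\le N$ every bracket vanishes by Lemma~\ref{lem:cons-inv-En(n)}; hence $\{C_M,C_N\}=0$ for all $M,N$.

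The main obstacle is functional independence, and here I would route the argument through the symplectic reduction. The map $J$ is a submersion because the $A$-action is free, so $\df J_0,\dots,\df J_{n+1}$ are everywhere independent. Since $H$ and the $C_N$ are $A$-invariant, they descend through $\Pi_\mu$ and $\varphi_\mu$ to functions on $T^*(G/A)\cong T^*\mathbb{R}^n$: the descended Hamiltonian is the central-force Hamiltonian \eqref{e reduced_Hamiltonian_Engel}, while substituting \eqref{e formofHmuexamples} into \eqref{e def_L_ij} identifies the reduced $L_{ij}$ with $-a_{n+1}$ times the angular momenta $\ell_{ij}=(x_i-c_i)p_{x_j}-(x_j-c_j)p_{x_i}$ about the centre $c=-a_{n+1}^{-1}(a_1,\dots,a_n)$, so that the reduced $C_N$ are the nested $\SO(N)$ angular-momentum Casimirs about $c$. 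Thus the independence of $\mathcal{F}$ at a point reduces to independence of the energy and of these nested Casimirs on the $2n$-dimensional reduced space, which is the classical central-force fact and holds on an open dense subset (the degenerate locus $a_{n+1}=0$ gives a linear potential and is handled by the same, simpler, argument).

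Combining the $n+2$ transverse directions supplied by $\df J$ with the $n$ independent differentials of the reduced functions yields rank $2n+2$ on a dense open subset of $T^*\Eng(n)$, so $\mathcal{F}$ is a complete set of independent prime integrals in involution. I expect the bookkeeping of the independence—specifically the explicit reduction of $C_N$ to angular momenta and the verification that energy together with nested Casimirs are independent for generic $\mu$—to be the only delicate point; everything else is a direct application of the bracket relations already established in this appendix.
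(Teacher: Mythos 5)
Your proof is correct, and it takes a route that is related to, but genuinely more complete than, the paper's own argument. Both rest on the same bracket computations (Lemmas \ref{l appendix_B_prime_integrals} and \ref{lem:cons-inv-En(n)}), but you assemble a different commuting family: the paper splits by parity, taking disjoint angular momenta plus even nested Casimirs ($\{H,L_{1,2},\dots,L_{2v-1,2v},C_4,\dots,C_{2v}\}$ for $n=2v$, and an analogous family for $n$ odd), while you use the full nested chain $\{H,C_2,\dots,C_n\}$, which removes the case distinction. The substantive difference is the count: the paper's displayed family contains only $n$ functions, although Arnold--Liouville integrability on the $(4n+4)$-dimensional manifold $T^*\Eng(n)$ requires $2n+2$ of them (as the appendix preamble itself promises); the missing $n+2$ integrals are never exhibited there. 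Your proposal repairs exactly this point: you adjoin the momentum-map components $J_0,\dots,J_{n+1}$, you verify that they are prime integrals (Noether, or Corollary \ref{c dynamicreduction}) and that they Poisson-commute with $H$ and with every $C_N$ because left-invariant and right-invariant vector fields commute, and your remark that purely left-invariant functions can never yield $2n+2$ independent commuting integrals shows that this completion is unavoidable rather than cosmetic. Finally, where the paper settles independence with ``easily follows by their definitions'', you give an actual argument: $\df J$ has full rank since the $A$-action is free, the reduced $H$ and $C_N$ become, through \eqref{e formofHmuexamples}, the energy and the nested $\SO(N)$ Casimirs of a central-force system on $T^*\mathbb{R}^n$ centred at $-a_{n+1}^{-1}(a_1,\dots,a_n)$, and joint independence follows by restricting a vanishing linear combination of differentials to $T(J^{-1}(\mu))$. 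The only caveat, which you state explicitly and which the paper's version shares silently, is that independence holds on an open dense set: for fully degenerate momenta (e.g.\ $\mu=a_0\,\df\theta_0$, where every $C_N$ vanishes identically on $J^{-1}(\mu)$, and likewise every $L_{ij}$ in the paper's family) one would need a different local family of integrals to meet the pointwise definition of integrability adopted in Section \ref{sec preliminaries}.
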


\begin{proof}
We consider first the case of $n$ even. Write $n = 2v$ for some $v\in\mathbb{N}$. We claim that $\{ H, L_{1,2}, L_{3,4}, \dots , L_{2v-1,2v}, C_4, C_6,  \cdots , C_{2v}\}$ is a set of independent prime integrals for the flow of the normal Hamiltonian $H$ that are in involution. Indeed, by Lemmas \ref{l appendix_B_prime_integrals}-\ref{lem:cons-inv-En(n)} the above functions are all prime integrals that are in involution. The independence of these functions easily follows by their definitions (see \eqref{e def_L_ij}).
When $n$ is odd, we write $n = 2v+1$ for some $v\in\mathbb{N}$ we can prove in a similar way that $\{H, L_{2,3}, L_{4,5}, \dots , L_{2v,2v+1}, C_2, C_4,  \cdots , C_{2v} \}$ is a set of independent prime integrals for the flow of the normal Hamiltonian that are in involution. 
\end{proof}

\bibliographystyle{plain}
\bibliography{bibliography.bib}

\begin{thebibliography}{10}

\bibitem{Found-Mech}
Ralph Abraham and Jerrold Marsden.
\newblock {\em Foundations of {M}echanics}.
\newblock 01 1978.

\bibitem{agrachev}
Andrei Agrachev, Davide Barilari, and Ugo Boscain.
\newblock {\em A {C}omprehensive {I}ntroduction to {S}ub-{R}iemannian
  Geometry}.
\newblock Cambridge University Press, 2019.

\bibitem{agrachev2009sR_manifold_have_points_of_smoothness}
Andrei~A Agrachev.
\newblock Any sub-{R}iemannian metric has points of smoothness.
\newblock In {\em Doklady Mathematics}, volume~79, pages 45--47. Springer,
  2009.

\bibitem{Arnold}
Vladimir~Igorevich Arnol'd.
\newblock {\em Mathematical methods of classical mechanics}.
\newblock Springer Science.

\bibitem{central_forces_integrability}
V~Balakrishnan, Suresh Govindarajan, and S~Lakshmibala.
\newblock The central force problem in n dimensions.
\newblock {\em Resonance}, 25(4):513--538, 2020.

\bibitem{RM-ABD}
Alejandro {Bravo-Doddoli} and Richard {Montgomery}.
\newblock {Geodesics in Jet Space}.
\newblock {\em Regular and Chaotic Dynamics}, 2021.

\bibitem{Greanleaf-Corwin}
Laurence Corwin and Frederick~P Greenleaf.
\newblock {\em Representations of nilpotent {L}ie groups and their
  applications: {V}olume 1, {P}art 1, {B}asic theory and examples}, volume~18.
\newblock Cambridge university press, 1990.

\bibitem{ledonnemoisala}
Enrico~Le Donne and Terhi Moisala.
\newblock Semigenerated step-3 {C}arnot algebras and applications to
  sub-{R}iemannian perimeter.
\newblock {\em arXiv preprint arXiv:2004.08619}, 2020.

\bibitem{gromov1996carnot}
Mikhael Gromov.
\newblock {C}arnot-{C}arath{\'e}odory spaces seen from within.
\newblock In {\em Sub-Riemannian geometry}, pages 79--323. Springer, 1996.

\bibitem{blowups_blowdowns}
Eero Hakavuori and Enrico Le~Donne.
\newblock Blowups and blowdowns of geodesics in {C}arnot groups.
\newblock {\em Journal of Differential Geometry}, 123(2):267--310, 2023.

\bibitem{kruglikov2017integrability}
Boris~S Kruglikov, Andreas Vollmer, and Georgios Lukes-Gerakopoulos.
\newblock On integrability of certain rank 2 sub-{R}iemannian structures.
\newblock {\em Regular and Chaotic Dynamics}, 22:502--519, 2017.

\bibitem{Enr-fr}
Enrico Le~Donne and Francesca Tripaldi.
\newblock A cornucopia of {C}arnot groups in {L}ow {D}imensions.
\newblock 08 2020.

\bibitem{Sym-reduc}
Jerrold Marsden and Alan Weinstein.
\newblock Reduction of symplectic manifolds with symmetry.
\newblock {\em Reports on Mathematical Physics}, 5, 1974.

\bibitem{Hamiltonian_reduction_by_stages_marsden_and_others}
Jerrold~E Marsden, Gerard Misiolek, Juan-Pablo Ortega, Matthew Perlmutter, and
  Tudor~S Ratiu.
\newblock {\em Hamiltonian reduction by stages}.
\newblock Springer, 2007.

\bibitem{marsden_ratiu-Introduction_to_mechanics_and_symmetry}
Jerrold~E Marsden and Tudor~S Ratiu.
\newblock {\em Introduction to mechanics and symmetry: a basic exposition of
  classical mechanical systems}, volume~17.
\newblock Springer Science \& Business Media, 2013.

\bibitem{montgomery1994abnormal}
Richard Montgomery.
\newblock Abnormal minimizers.
\newblock {\em Siam Journal on control and optimization}, 32(6):1605--1620,
  1994.

\bibitem{tour}
Richard Montgomery.
\newblock {\em A tour of subriemannian geometries, their geodesics and
  applications}.
\newblock Number~91. American Mathematical Soc., 2002.

\bibitem{montgomeryhill}
Richard Montgomery et~al.
\newblock Who's {A}fraid of the {H}ill {B}oundary?
\newblock {\em SIGMA. Symmetry, Integrability and Geometry: Methods and
  Applications}, 10:101, 2014.

\bibitem{Monti-Socionovo}
Roberto Monti and Alessandro Socionovo.
\newblock Non-minimality of spirals in sub-{R}iemannian manifolds.
\newblock {\em Calculus of Variations and Partial Differential Equations},
  60(6):218, 2021.

\bibitem{noether1971invariant}
Emmy Noether.
\newblock Invariant variation problems.
\newblock {\em Transport theory and statistical physics}, 1(3):186--207, 1971.

\bibitem{ortega-ratiu-momentum-maps}
Juan-Pablo Ortega and Tudor~S Ratiu.
\newblock {\em Momentum maps and {H}amiltonian reduction}, volume 222.
\newblock Springer Science \& Business Media, 2013.

\bibitem{ratiu2006involution}
Tudor Ratiu.
\newblock Involution theorems.
\newblock In {\em Geometric Methods in Mathematical Physics: Proceedings of an
  NSF-CBMS Conference Held at the University of Lowell, Massachusetts, March
  19--23, 1979}, pages 219--257. Springer, 2006.

\bibitem{Robinson:1995}
Derek~J.S. Robinson.
\newblock {\em A {C}ourse in the {T}heory of {G}roups}.
\newblock Springer, 2 edition, 1995.

\end{thebibliography}

\end{document}